\documentclass[11pt,reqno,a4paper]{amsart}

\usepackage[T1]{fontenc}
\usepackage[utf8]{inputenc}

\usepackage{libertinus}
\usepackage{microtype}
\usepackage{geometry}
\usepackage{amsmath,amssymb,amsthm,amsfonts}
\usepackage{mathtools}
\usepackage{mathrsfs}
\usepackage{bbm}

\numberwithin{equation}{section}

\usepackage{graphicx}
\usepackage{xcolor}
\usepackage{enumitem}
\usepackage{comment}
\setlist{itemsep=2pt,topsep=4pt}

\usepackage[
  colorlinks=true,
  linkcolor=blue,
  citecolor=blue,
  urlcolor=blue,
  pagebackref=true
]{hyperref}

\renewcommand*{\backrefalt}[4]{\ifcase #1\relax
  \or
    {\footnotesize Cited on page #2.}\else
    {\footnotesize Cited on pages #2.}\fi
}

\usepackage[nameinlink,noabbrev]{cleveref}
\crefname{section}{Section}{Sections}
\Crefname{section}{Section}{Sections}
\crefname{subsection}{Subsection}{Subsections}
\Crefname{subsection}{Subsection}{Subsections}

\usepackage{titlesec}

\titleformat{\section}
  {\normalfont\large\bfseries\centering}
  {\thesection.}
  {0.75em}
  {}

\titlespacing*{\section}
  {0pt}
  {3.2ex plus 1ex minus .2ex}
  {1.6ex plus .3ex}

\titleformat{\subsection}
  {\normalfont\normalsize\bfseries}
  {\thesubsection.}
  {0.75em}
  {}

\titlespacing*{\subsection}
  {0pt}
  {2.4ex plus .8ex minus .2ex}
  {1.0ex plus .2ex}

\titleformat{\subsubsection}
  {\normalfont\normalsize\bfseries\itshape}
  {\thesubsubsection.}
  {0.75em}
  {}

\titlespacing*{\subsubsection}
  {0pt}
  {2.0ex plus .6ex minus .2ex}
  {0.7ex plus .2ex}

\usepackage{aliascnt}
\usepackage{etoolbox}

\newtheoremstyle{compact}
  {6pt}
  {1pt}
  {\itshape}
  {}
  {\bfseries}
  {.}
  {0.5em}
  {}

\theoremstyle{compact}
\newtheorem{theorem}{Theorem}[section]

\newaliascnt{proposition}{theorem}
\newtheorem{proposition}[proposition]{Proposition}
\aliascntresetthe{proposition}

\newaliascnt{lemma}{theorem}
\newtheorem{lemma}[lemma]{Lemma}
\aliascntresetthe{lemma}

\newaliascnt{corollary}{theorem}
\newtheorem{corollary}[corollary]{Corollary}
\aliascntresetthe{corollary}

\newtheorem{maintheorem}{Theorem}

\theoremstyle{definition}
\newaliascnt{definition}{theorem}

\aliascntresetthe{definition}

\newtheorem*{question}{Question}

\theoremstyle{definition}

\AtBeginEnvironment{proof}{\vspace{6pt}}

\makeatletter
\AtBeginEnvironment{thebibliography}{\def\@mklab#1{#1\hfil}}
\makeatother

\usepackage[toc,page]{appendix}

\newcommand{\bR}{\mathbb{R}}
\newcommand{\bC}{\mathbb{C}}
\newcommand{\bZ}{\mathbb{Z}}

\newcommand{\bT}{\mathbb{T}}

\newcommand{\E}{\mathbb{E}}
\newcommand{\Pp}{\mathbb{P}}
\DeclareMathOperator{\Var}{Var}
\DeclareMathOperator{\Cov}{Cov}

\DeclareMathOperator{\tr}{tr}

\newcommand{\eps}{\varepsilon}
\newcommand{\ind}{\mathbbm{1}}

\newcommand{\dto}{\xrightarrow{\,d\,}}
\newcommand{\pto}{\xrightarrow{\,\Pp\,}}
 
\begin{document}

\title{Completely Positive Entropy and Fourier Central Limit Theorems for Stationary Random Measures}

\author{Michael Bj\"orklund}
\address{Department of Mathematics, Chalmers University of Technology and University of Gothenburg, Gothenburg, Sweden}
\email{micbjo@chalmers.se}

\begin{abstract}
We prove an almost-everywhere Fourier central limit theorem for stationary
random measures on $\bR^d$ with local second moments whose translation action
is essentially free and has completely positive entropy. For the resulting
almost-everywhere defined Bartlett density $s_\eta$, we show that there is a
single $\lambda_d$-conull set of frequencies, independent of the test
functions, on which finite collections of normalized smooth-window Fourier
transforms converge jointly to proper complex Gaussian limits with covariance
determined by $s_\eta$. No quantitative mixing, correlation-decay, or
cumulant-summability assumption is imposed. For stationary point processes of
positive intensity, the same good-frequency set yields Gaussian limits for
ball-window Fourier transforms and exponential limits for their squared
moduli. We also construct a stationary ergodic zero-entropy random measure with
bounded continuous Bartlett density, positive $\lambda_1$-almost everywhere,
for which the Fourier central limit theorem fails.
\end{abstract}

\maketitle

\section{Introduction}
\label{sec:introduction}

Write $\lambda_d$ for Lebesgue measure on $\bR^d$, and let
$\mathcal M_+(\bR^d)$ denote the space of locally finite positive Radon
measures. Let $\eta$ be a translation-invariant Borel probability measure on
$\mathcal M_+(\bR^d)$ with local second moments, and let $\omega$ denote the
canonical random measure with law $\eta$. Its intensity $\rho_\eta$ is defined
by
\[
    \E_\eta[\omega(f)]=\rho_\eta\int_{\bR^d}f\,d\lambda_d,
    \qquad f\in C_c(\bR^d).
\]
Set
\[
    M_\omega:=\omega-\rho_\eta\lambda_d,
\]
and write $M(f)$ for the random variable $\omega\mapsto M_\omega(f)$. For bounded compactly supported Borel $f$, write
\[
    (\mathbb S f)(\omega):=\omega(f),
\]
and use the same notation for Schwartz functions. The \emph{Bartlett spectrum}
$\sigma_\eta$ is characterized, for $f\in\mathcal S(\bR^d)$, by
\begin{equation}
\label{eq:intro-bartlett}
    \Var_\eta(\mathbb S f)
    =\int_{\bR^d}|\hat f(\xi)|^2\,d\sigma_\eta(\xi).
\end{equation}
Under the CPE hypotheses imposed below, $\sigma_\eta\ll\lambda_d$
automatically; see Corollary~\ref{cor:cpe-bartlett-ac}. We write
\begin{equation}
\label{eq:intro-bartlett-density}
    s_\eta:=\frac{d\sigma_\eta}{d\lambda_d}.
\end{equation}
The Bartlett density is an element of $L^1_{\mathrm{loc}}(\lambda_d)$ defined
only modulo $\lambda_d$-null sets. Thus a pointwise value $s_\eta(\xi)$ is not
part of the spectral data at an exceptional frequency; statements involving
$s_\eta(\xi)$ are intrinsically $\lambda_d$-almost-everywhere statements unless
additional regularity specifies a pointwise version.

For $h\in C_c^\infty(\bR^d;\bC)$, let
\begin{equation}
\label{eq:intro-empirical-fourier}
    Z_R(\xi;h)
    :=R^{-d/2}M\bigl(h(\cdot/R)\chi_\xi\bigr).
\end{equation}
We refer to the variables in~\eqref{eq:intro-empirical-fourier} as the
normalized empirical Fourier transforms of $M$. For the CPE class considered
below, the Bartlett density therefore exists, and for $\lambda_d$-almost every
$\xi$ the Bartlett identity gives
\begin{equation}
\label{eq:intro-covariance-target}
    \E_\eta\!\left[Z_R(\xi;h)\overline{Z_R(\xi;g)}\right]
    \longrightarrow
    s_\eta(\xi)\int_{\bR^d}h\overline g\,d\lambda_d.
\end{equation}
Equation~\eqref{eq:intro-covariance-target} identifies their asymptotic second
moments. We ask for conditions under which these random variables have Gaussian
limits.

The qualifier ``for $\lambda_d$-almost every $\xi$'' is essential. The results
below make no assertion at a prescribed frequency, since that frequency may
belong to the exceptional null set. In particular, they give no conclusion at
$\xi=0$, the distinguished frequency in the study of number fluctuations and
hyperuniformity. The theorem is an almost-everywhere-frequency CLT, not a
fixed-frequency CLT.

Finite Fourier transforms of point processes go back at least to Bartlett's
spectral analysis \cite{Bar63}. Classical continuum Fourier central limit
theorems were developed by Brillinger under higher-order spectral assumptions:
\cite{Bri72} treats stationary interval functions, including stationary random
measures on $\bR$, and \cite{Bri82} proves asymptotic normality of finite
Fourier transforms of stationary generalized processes over locally compact
abelian groups under cumulant-spectrum mixing conditions. The latter theorem
works at prescribed distinct frequencies where the second-order spectrum is
continuous and positive. Thus the result below is not the first continuum
random-measure Fourier CLT; its contribution is to obtain a continuum theorem
at Lebesgue-almost every frequency from CPE and local second moments, without higher-order
cumulant assumptions. Squared Fourier moduli are standard empirical spectral
quantities and are used in recent work on the structure factor and
hyperuniformity; see, for example, \cite{HGBL23,KLH26,MBL24}.

Almost-everywhere Fourier central limit theorems under qualitative projective
or filtration assumptions form a separate line of work. Peligrad and Wu
\cite{PW10} prove a Fourier CLT for regular stationary sequences at
Lebesgue-almost every frequency by a frequency-dependent martingale
approximation. Cohen and Conze \cite{CC13} give a Fej\'er--Lebesgue proof in
the setting of $K$-systems and extend the method to $\bZ^d$-actions. Peligrad
and Zhang \cite{PZ19} prove a Lebesgue-almost-everywhere Fourier CLT for
stationary random fields adapted to a commuting filtration; their assumptions
impose no rate of covariance decay and no smoothness of the spectral density.
Section~\ref{sec:lattice-clt} proves the corresponding lattice theorem for
Hilbert-valued fields with smooth $d$-dimensional weights. The discrete
results just cited do not by themselves give the continuum random-measure
theorem: the cell reduction must retain the full centered restriction of the
random measure in one fixed Hilbert-valued field, and the resulting lattice
spectrum is a periodization of the continuum Bartlett spectrum. Accordingly,
Section~\ref{sec:continuum} requires three additional arguments: the unit-cell
data are distribution-valued, lattice sampling aliases continuum frequencies
modulo $\bZ^d$, and the exceptional set in the cell spectral identity must be
chosen independently of $\xi$. These are handled by the $H^{-s}$ unit-cell
encoding, the identity $\widehat{\ind}_C(m)=0$ for
$m\in\bZ^d\setminus\{0\}$, and the common-set spectral identification in
Proposition~\ref{prop:cell-periodization-dealiasing}, which recovers
$s_\eta(\xi)$ at the original continuum frequency on one conull set.
Section~\ref{sec:scattering} gives sharp-window and point-process limits, while
Theorem~\ref{thm:intro-counterexample} shows that a bounded continuous Bartlett
density alone does not imply the Fourier CLT. The perfect-past filtration used
below is lexicographically ordered rather than commuting in the coordinatewise
sense of \cite{PZ19}, so the lattice Fourier CLT is proved directly rather than
invoked from that result.

Other central limit theorems for spatial systems start from quantitative
dependence assumptions. Bolthausen \cite{Bol82} treats stationary random fields
under strong mixing conditions. The author and Gorodnik \cite{BG20} prove CLTs
for group actions that are exponentially mixing of all orders by a cumulant
method. In the point-process setting, Klatt, Last and Henze \cite{KLH26} obtain
a multivariate Fourier CLT from factorial-cumulant hypotheses and use the
resulting scattering laws for hyperuniformity testing. Krishnapur and
Yogeshwaran \cite{KY24} prove CLTs for smooth statistics of simple point
processes satisfying an integral identity for higher-order truncated
correlation functions. More generally, B{\l}aszczyszyn, Yogeshwaran and
Yukich \cite{BYY26} derive Gaussian limits for statistics of marked point
processes from fast mixing of correlations; their cumulant argument passes
through Brillinger mixing and can also recover the Klatt--Last--Henze Fourier
CLT; see \cite[p.~61]{BYY26}. The hypothesis in the present paper is
different: no quantitative mixing rate, correlation-decay estimate, or
cumulant-summability assumption is imposed. CPE is nevertheless a strong
qualitative dynamical hypothesis; the distinction here is that the argument
requires no quantitative rate of decorrelation.

Our assumption is that the translation action has \emph{completely positive
entropy} (CPE). For the standard-lattice subaction, the Rohlin--Sinai theorem
in dimension one and Kami\'nski's higher-rank theory of perfect
$\sigma$-algebras \cite{Kam81,Kam91,Kam96} provide a lexicographically ordered
past whose remote tail is the Pinsker algebra. Under CPE this tail is trivial.
Section~\ref{sec:perfect-past} then obtains an orthogonal decomposition
of $L^2_0$ into lexicographically ordered martingale differences, which is the
input to the Fourier CLT. Bernoulli actions are CPE, but CPE is strictly weaker
than the Bernoulli property; the precise entropy conventions are given in
Section~\ref{subsec:entropy-cpe}. This gives a measure-theoretic alternative to model-specific quantitative
decorrelation assumptions.

The CPE hypothesis includes several standard point-process models. Homogeneous
Poisson point processes are Bernoulli under translations, and Bernoulli
$\bR^d$-actions are CPE in the spatial-entropy sense used below; see the
closing remark of \cite[Section~3.1]{DG12}. Classical Mat\'ern hard-core
processes of types I and II give further examples. They are obtained by
translation-equivariant finite-range thinnings of homogeneous Poisson or
independently marked Poisson processes \cite{TBvB13}, and hence are CPE as
factors of Bernoulli actions. Their finite-range dependence implies mixing and,
for these nontrivial processes, essential freeness: ergodicity makes an
almost-sure stabilizer deterministic, while mixing excludes a nonzero
deterministic period. The same argument applies to many nontrivial
Poisson-driven hard-core and soft-core thinning models defined by finite-range
local rules.

CPE is also compatible with nonintegrable covariance. A standard example
is the stationary infinite-source Poisson process \cite{FRS07}. Let
$\{(T_i,L_i)\}$ be a homogeneous marked Poisson process on
$\bR\times(0,\infty)$ with ground intensity $\lambda$, and set
\[
    X(t):=\sum_i \ind_{[T_i,T_i+L_i]}(t).
\]
If $\Pp(L>u)\sim c u^{-\alpha}$ with $1<\alpha<2$, then
\[
    \Cov(X(0),X(t))
    =\lambda\E[(L-|t|)_+]
    \sim \frac{\lambda c}{\alpha-1}|t|^{1-\alpha},
\]
so the covariance is not integrable. Since $\alpha>1$, one has
$\E L<\infty$. Moreover, if $B$ is a bounded interval of length $T$ and
$N_B$ denotes the number of source intervals intersecting $B$, then
$N_B$ is Poisson with mean $\lambda(T+\E L)$ and
\[
    \int_B X(t)\,dt \leq T N_B.
\]
Thus the associated random measure $X(t)\,dt$ has local moments of every
order. It is an equivariant factor of the marked Poisson process and is
therefore CPE. As a nontrivial factor of a Bernoulli action it is also mixing,
and the stabilizer argument used above for the Mat\'ern models gives essential
freeness. Hence Theorem~\ref{thm:intro-main-random-measure} applies to a CPE
model with nonsummable pair correlations; in particular, CPE does not entail
short-range dependence. One may also use an independent Poisson process on
$\bR\times\bR_+$ and retain $(t,u)$ when $u\leq a+bX(t)$ to obtain a stationary
Cox point process. This is again a nontrivial factor of a Bernoulli action,
hence CPE, mixing, and essentially free; for $t\neq0$, its pair-covariance
density is $b^2\Cov(X(0),X(t))$, and therefore has the same nonintegrable
power-law tail.

Osada \cite[Theorem~1.1]{Osa21} proved the Bernoulli property for
translation-invariant determinantal point processes for which the Fourier
transform of the kernel is integrable and takes values in $[0,1]$ almost
everywhere. For stationary subcritical linear Hawkes processes on $\bR$, the
Poisson-cluster representation of Hawkes and Oakes \cite{HO74} realizes the
process as an equivariant factor of a marked Poisson process (the mark records
the descendant cluster); hence its translation action is CPE. These nontrivial
Bernoulli and Poisson-factor examples are mixing, and therefore essentially
free by the same stabilizer argument as above.

CPE is genuinely weaker than the Bernoulli property even among stationary
point processes. Smorodinsky adapted Ornstein's construction to obtain a
$K$-flow which is not Bernoulli \cite{Smo75}. By the generating Delone
cross-section construction in
\cite{Bjo26b}, such a flow admits a return-time point process measurably
isomorphic to the original flow. The resulting stationary point process is CPE
but not Bernoulli.

\subsection{Main results}
\label{subsec:introduction-main-results}

\begin{maintheorem}[Empirical Fourier transforms under CPE]
\label{thm:intro-main-random-measure}
Let $\eta$ be a stationary random measure on $\bR^d$ with local second
moments. Assume that its translation action is essentially free and has CPE.
Then $\sigma_\eta\ll\lambda_d$. Moreover, for $\lambda_d$-almost every
$\xi\in\bR^d$, simultaneously for every $m\geq1$ and every
$h_1,\ldots,h_m\in C_c^\infty(\bR^d;\bC)$,
\begin{equation}
\label{eq:intro-main-clt}
    \bigl(Z_R(\xi;h_1),\ldots,Z_R(\xi;h_m)\bigr)
    \dto (Z_1,\ldots,Z_m),
\end{equation}
where $(Z_1,\ldots,Z_m)$ is a complex Gaussian vector with mean zero and
\begin{equation}
\label{eq:intro-main-covariance}
    \E[Z_a\overline{Z_b}]
    =s_\eta(\xi)\int_{\bR^d}h_a\overline{h_b}\,d\lambda_d,
    \qquad \E[Z_aZ_b]=0.
\end{equation}
\end{maintheorem}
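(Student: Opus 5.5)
The proof reduces the continuum statement to a lattice Fourier CLT for a Hilbert-valued stationary field. Then it identifies the limiting covariance by de-aliasing the periodized spectrum. The steps are taken in the order the introduction outlines.

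\textbf{Step 1: unit-cell encoding.} Let $C=[0,1)^d$ be the unit cell. Fix $s>d/2$ large enough that, by local second moments, the centered restriction $M_\omega|_{C+n}$, translated back to $C$, defines an element $X_n(\omega)$ of a fixed Hilbert space $H=H^{-s}$. This gives a stationary $H$-valued field $(X_n)_{n\in\bZ^d}$ under the $\bZ^d$-subaction with $\E\|X_0\|^2<\infty$. For a test function $h$ one writes
\[
    M\bigl(h(\cdot/R)\chi_\xi\bigr)=\sum_n \langle X_n,\phi_{n,R,\xi}\rangle .
\]
Here $\phi_{n,R,\xi}$ is the restriction of $h(\cdot/R)\chi_\xi$ to $C+n$, shifted back to $C$. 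Since $h$ is smooth, $\phi_{n,R,\xi}=e^{2\pi i\langle n,\xi\rangle}\bigl(h(n/R)\,\chi_\xi|_C+O(R^{-1})\bigr)$ in $H^{s}$, and the error contributes $o(R^{d/2})$ in $L^2$. So it suffices to treat the smooth-weight lattice sum
\[
    \sum_n h(n/R)e^{2\pi i\langle n,\xi\rangle}\langle X_n,\chi_\xi|_C\rangle .
\]
Because the vector $\chi_\xi|_C$ depends on $\xi$, the lattice theorem is applied to the $H$-valued field itself, with a frequency-dependent functional. This is why the Hilbert-valued version is needed.

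\textbf{Step 2: martingale decomposition and the lattice CLT.} CPE together with essential freeness gives, via Kami\'nski's perfect $\sigma$-algebra for the $\bZ^d$-subaction, a lexicographic past with trivial tail. The result of Section~\ref{sec:perfect-past} then decomposes $L^2_0$ into orthogonal martingale differences $P_j X_0$. The lattice CLT of Section~\ref{sec:lattice-clt} is invoked for the field $X_n$ with weights $h(n/R)$. That result gives Lebesgue-a.e.\ $\theta\in\bT^d$ convergence to a proper complex Gaussian with covariance $\langle F(\theta)u,v\rangle\int h_a\overline{h_b}$, simultaneously for all $u,v\in H$ and all $h$. Here $F$ is the operator-valued spectral density, which exists because the spectral measure of a CPE field is absolutely continuous (Corollary~\ref{cor:cpe-bartlett-ac}). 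Its proof is Cohen--Conze-style. For each $\theta$ one forms $D(\theta)=\sum_j e^{2\pi i\langle j,\theta\rangle}P_jX_0$, which converges in $L^2(\lambda\otimes\eta)$ by Plancherel and orthogonality. The weighted sum is then approximated by a martingale with differences $e^{2\pi i\langle n,\theta\rangle}U^nD(\theta)$. The martingale CLT applies, using the ergodic theorem for the conditional variances, and Fej\'er--Lebesgue differentiation controls the remainder at a.e.\ $\theta$. Simultaneity over all $h$ and all $m$ follows because one conull set is fixed via a countable dense family of $h$'s and $u$'s, and the uniform $L^2$ bound $\E|Z_R|^2\lesssim\|h\|_2^2\,\mathrm{tr}$-type constants extends it to the closure; Cram\'er--Wold handles the joint statement.

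\textbf{Step 3: de-aliasing.} The lattice frequency is $\theta=\xi \bmod \bZ^d$. The resulting limit variance is $\langle F(\theta)\chi_\xi|_C,\chi_\xi|_C\rangle$. Proposition~\ref{prop:cell-periodization-dealiasing} rewrites this as
\[
    \sum_{m\in\bZ^d}\bigl|\widehat{\ind}_C(m)\bigr|^2\,s_\eta(\xi+m),
\]
and since $\widehat{\ind}_C(m)=0$ for $m\neq0$ only the term $s_\eta(\xi)$ survives. The proposition gives this identity on one $\lambda_d$-conull set of $\xi$, independent of the test vector. Pulling back the a.e.\ set of $\theta$ under the quotient map keeps it conull in $\bR^d$. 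Properness, i.e.\ $\E Z_aZ_b=0$, follows because the pseudo-covariance limit involves the spectrum at $2\xi$-type cross terms, which vanish for a.e.\ $\xi\neq0$ by the Riemann--Lebesgue/Fej\'er argument.

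\textbf{Main obstacle.} I expect the crux to be the common-null-set issue. The lattice CLT provides an exceptional $\theta$-set that may a priori depend on the vector $\chi_\xi|_C$, which itself varies continuously with $\xi$. I would handle this by establishing the lattice theorem for all $u\in H$ simultaneously on one set, with operator-valued $F$. Then continuity of $\xi\mapsto\chi_\xi|_C$ in $H$, together with the uniform bound $\|F(\theta)\|_{\mathrm{op}}$ finite a.e., lets the frequency-dependent vector be inserted after fixing $\theta$.
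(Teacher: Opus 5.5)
Your overall route is the paper's. You encode the centered restriction of $M$ to unit cells in $H^s(C)^*$ and apply the Hilbert-valued lattice CLT obtained from the lexicographic martingale decomposition. CPE passes to the $\bZ^d$-subaction by Proposition~\ref{prop:dooley-golodets-lattice-cpe}, and this is where essential freeness enters. You then identify the variance through Proposition~\ref{prop:cell-periodization-dealiasing} and $\widehat{\ind}_C(m)=0$ for $m\neq0$.

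The step that fails as written is the reduction in Step~1. Freezing $h((u+n)/R)$ at $h(n/R)$ on each cell costs $O(R^{-1})$ in $H^s(C)$ per cell. However, about $R^d$ cells are active and the cell variables $X_n$ are correlated, so Minkowski's inequality and stationarity give only an error of order $R^{d-1}$ in $L^2(\eta)$. This is $o(R^{d/2})$ only for $d=1$. For $d=2$ the bound is of the same order as the main term, and for $d\geq3$ it is larger. Your assertion is true at good frequencies, but only because of square-root cancellation in the correction sums, and that cancellation is precisely what has to be proved.

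The paper supplies it in Proposition~\ref{prop:cell-freezing}. Expand $h((u-k)/R)$ by Taylor's formula at the lattice point to an order $r$ with $r+1>d/2$; the paper takes $r\geq s$. For $1\leq|\alpha|\leq r$, the term of order $\alpha$ equals $R^{-|\alpha|}(\alpha!)^{-1}\ell_{\xi,\alpha}\bigl(V_R(h_\alpha,\theta)\bigr)$, where $h_\alpha(x)=(\partial^\alpha h)(-x)$ and $\ell_{\xi,\alpha}(z)=\langle z,u^\alpha\chi_\xi\rangle$. This is a weighted lattice Fourier sum of the same $\mathscr H_s$-valued field at the same $\theta=[\xi]$, with only the smooth weight and the bounded functional changed. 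The good set $E_Y$ depends on neither of these. So the lattice theorem makes these sums bounded in probability, and they vanish after multiplication by $R^{-|\alpha|}$. Alternatively, use Proposition~\ref{prop:frequency-martingale-approximation} together with orthogonality of the martingale differences, which gives $\E\|V_R(h_\alpha,\theta)\|_{\mathscr H_s}^2\to\|h_\alpha\|_2^2\tr\mathcal F_Y(\theta)$. Only the Taylor remainder, whose $H^s(C)$-norm is $O(R^{-(r+1)})$ per cell, is handled by crude counting. That gives $O(R^{d/2-r-1})=o(1)$ after normalization.

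Thus the freezing step is a statement at $\theta\in E_Y$, not a deterministic estimate valid for every $\xi$. It must therefore come after the good frequency set is fixed, not before the lattice theorem as in your outline.

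Separately, the countable-density argument over $h$ in Step~2 is not needed. At an $L^2$-Lebesgue point of $\Gamma_Y$ the martingale approximation holds for every $h\in C_c^\infty$ simultaneously, and that is how the paper obtains one conull set for all test functions.
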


For a stationary point process of positive intensity $\rho_\eta$, define the
structure-factor density
\[
    S_\eta:=\frac{s_\eta}{\rho_\eta},
\]
again only modulo $\lambda_d$-null sets. For point processes, essential
freeness means that for $\eta$-almost every configuration $\omega$, no
nonzero translation leaves $\omega$ invariant.

\begin{corollary}[Sharp-window scattering limit]
\label{cor:intro-point-process-limits}
Under the assumptions of Theorem~\ref{thm:intro-main-random-measure}, if
$\eta$ is a stationary point process of positive intensity, then for
$\lambda_d$-almost every $\xi\in\bR^d$,
\begin{equation}
\label{eq:intro-point-process-fourier-limit}
    \frac{1}{\sqrt{|\omega\cap B_R|}}
      \sum_{x\in\omega\cap B_R}\chi_\xi(x)
    \dto \mathcal{CN}\bigl(0,S_\eta(\xi)\bigr),
\end{equation}
where the left-hand side is defined to be $0$ when $|\omega\cap B_R|=0$,
and consequently
\begin{equation}
\label{eq:intro-point-process-scattering-limit}
    \frac{1}{|\omega\cap B_R|}
      \left|\sum_{x\in\omega\cap B_R}\chi_\xi(x)\right|^2
    \dto S_\eta(\xi)\operatorname{Exp}(1).
\end{equation}
The second left-hand side is also defined to be $0$ when
$|\omega\cap B_R|=0$.
\end{corollary}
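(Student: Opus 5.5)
The plan is to deduce the sharp-window statement from Theorem~\ref{thm:intro-main-random-measure} in three steps: approximate the ball indicator by smooth windows, normalize by the deterministic count $\rho_\eta\lambda_d(B_R)$, and then replace that by the random count $|\omega\cap B_R|$ using Slutsky. Throughout we work on the single conull set of frequencies supplied by the theorem. We also remove $\xi=0$ (a null set), and intersect with the full-measure set where the almost-everywhere identities for $s_\eta$ used below hold.

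\emph{Step 1 (centering is negligible).} Fix such a $\xi\neq0$. The sum $\sum_{x\in\omega\cap B_R}\chi_\xi(x)$ equals $\omega(\ind_{B_1}(\cdot/R)\chi_\xi)$, and
\[
M\bigl(\ind_{B_1}(\cdot/R)\chi_\xi\bigr)=\omega\bigl(\ind_{B_1}(\cdot/R)\chi_\xi\bigr)-\rho_\eta R^d\widehat{\ind_{B_1}}(R\xi),
\]
up to sign conventions for $\chi_\xi$. Since $|\widehat{\ind_{B_1}}(\zeta)|\lesssim|\zeta|^{-(d+1)/2}$, the deterministic term is $O(R^{(d-1)/2})=o(R^{d/2})$. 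Hence, after dividing by $R^{d/2}$, only the centered ball transform $W_R:=R^{-d/2}M(\ind_{B_1}(\cdot/R)\chi_\xi)$ matters.

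\emph{Step 2 (smooth approximation).} Choose $h_k\in C_c^\infty$ with $\|h_k-\ind_{B_1}\|_{L^2}\to0$ and all supports contained in $B_2$. The error $W_R-Z_R(\xi;h_k)$ is $R^{-d/2}M(g_k(\cdot/R)\chi_\xi)$ with $g_k=\ind_{B_1}-h_k$. By the Bartlett identity \eqref{eq:intro-bartlett}, extended to bounded compactly supported functions, its second moment is
\[
\int R^d|\hat g_k(R(\zeta-\xi))|^2 s_\eta(\zeta)\,d\zeta = \bigl(s_\eta * R^d|\hat{\tilde g}_k(R\,\cdot)|^2\bigr)(\xi).
\]
This is a convolution of $s_\eta$ with a nonnegative approximate identity of mass $\|g_k\|_2^2$.

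This is the main obstacle. A Lebesgue-point argument requires a majorant kernel, which $|\hat g_k|^2$ does not obviously provide, and $s_\eta$ is merely $L^1_{\mathrm{loc}}$, so no uniform bound in $R$ is available. I would first try to use whatever maximal or common-set spectral statement the paper proves in the continuum section, namely the covariance convergence \eqref{eq:intro-covariance-target} for bounded compactly supported windows on the common conull set. If only smooth windows are covered, I would instead bound $|g_k|\le$ a smooth majorant and use positivity of the Bartlett measure. The limsup of the second moment is then $s_\eta(\xi)\|g_k\|_2^2\to0$. Combining this with Theorem~\ref{thm:intro-main-random-measure} and a standard approximation lemma for convergence in distribution gives
\[
W_R\dto\mathcal{CN}\bigl(0,s_\eta(\xi)\lambda_d(B_1)\bigr).
\]

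\emph{Step 3 (random normalization).} Essential freeness and CPE imply ergodicity, so by the ergodic theorem for stationary random measures $|\omega\cap B_R|/(\rho_\eta\lambda_d(B_1)R^d)\to1$ almost surely; at minimum, $L^2$ convergence follows from local second moments. The event $|\omega\cap B_R|=0$ has vanishing probability. By Slutsky's lemma,
\[
\frac{1}{\sqrt{|\omega\cap B_R|}}\sum_{x\in\omega\cap B_R}\chi_\xi(x)=\frac{W_R+o(1)}{\sqrt{\rho_\eta\lambda_d(B_1)}}\,(1+o_{\Pp}(1))\dto\mathcal{CN}\bigl(0,s_\eta(\xi)/\rho_\eta\bigr),
\]
which is \eqref{eq:intro-point-process-fourier-limit}. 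Finally, \eqref{eq:intro-point-process-scattering-limit} follows by the continuous mapping $z\mapsto|z|^2$, since $|\mathcal{CN}(0,\sigma^2)|^2\sim\sigma^2\operatorname{Exp}(1)$ for a proper complex Gaussian. The convention of setting the quantity to $0$ on the null-count event is irrelevant because that event has vanishing probability.
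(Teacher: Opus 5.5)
Your architecture matches the paper's. Bessel decay kills the deterministic centering term. An $L^2$ smooth-window approximation plus the converging-together theorem gives the ball-window CLT (Theorem~\ref{thm:ball-fourier-clt}). The ergodic theorem, Slutsky and $z\mapsto|z|^2$ then finish. Steps~1 and~3 are fine. The gap is Step~2, which you correctly flag as the crux but do not close.

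Your first option is not available. \eqref{eq:intro-covariance-target} and \eqref{eq:empirical-fourier-covariance-limit} are stated only for $h,g\in C_c^\infty$, and the sharp-window second-moment limit is exactly what must be proved here. Your fallback is invalid. The identity $\E|M(f)|^2=\int|\hat f|^2\,d\sigma_\eta$ is monotone under pointwise domination of $|\hat f|$, not of $|f|$. A spatial bound $|g_k|\le\phi$ gives no comparison of centered variances: the reduced covariance of $M$ is signed in general, and the modulation by $\chi_\xi$ destroys positivity anyway. The only direct estimate, $|M(f)|\le\omega(|f|)+\rho_\eta\int|f|\,d\lambda_d$, discards the centering and leaves a bound growing like $R^{d}$ after division by $R^d$.

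The missing observation is that $|\hat g_k|^2$ is itself an admissible kernel, by the decay you already used in Step~1. From \eqref{eq:ball-fourier-bessel},
\[
  |\hat g_k(u)|^2\le 2|\widehat{\ind}_{B_1}(u)|^2+2|\hat h_k(u)|^2\le C_k(1+\|u\|)^{-d-1},
\]
which has the form $(1+\|u\|)^{-d-\delta}$ with $\delta=1$. This uses the curvature of the sphere. For a cube the analogous bound fails along the coordinate axes when $d\ge2$, so your caution is justified for general windows, but not for balls.

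Your concern that $s_\eta$ is only $L^1_{\mathrm{loc}}$ is handled by translation boundedness of $s_\eta\lambda_d$. At a Lebesgue point $\xi$ one has $\int_{B_r(\xi)}s_\eta\,d\lambda_d\le C_0r^d$ for small $r$, and translation boundedness gives the same growth for large $r$. A dyadic-annulus decomposition then makes the tail of $\int|\hat g_k(u)|^2s_\eta(\xi+u/R)\,du$ uniformly small in $R\ge1$. This is Lemma~\ref{lem:differentiation-decaying-kernels}. Combined with the indicator extension of the Bartlett identity (Lemma~\ref{lem:bartlett-indicators}), it yields
\[
  \lim_{R\to\infty}\E_\eta\bigl|W_R-Z_R(\xi;h_k)\bigr|^2=s_\eta(\xi)\|g_k\|_2^2
\]
at every Lebesgue point $\xi$ of $s_\eta$, which is precisely what your Step~2 needs. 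With this inserted, and $\xi$ restricted to Lebesgue points as you already allowed, your argument goes through.
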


Theorem~\ref{thm:intro-main-random-measure} gives CPE as a sufficient
condition; we do not address necessity. Absolute continuity of the Bartlett
spectrum alone would be a weaker hypothesis. The next theorem shows that it is
insufficient even when the density is bounded, continuous, and positive
$\lambda_1$-almost everywhere.

\begin{maintheorem}[Regular Bartlett spectrum without a Fourier CLT]
\label{thm:intro-counterexample}
There is a stationary ergodic random measure $\eta$ on $\bR$ with zero entropy
and a bounded continuous Bartlett density $s_\eta$, positive
$\lambda_1$-almost everywhere, together with a sequence $R_N\to\infty$, such
that for every $\xi$ with $s_\eta(\xi)>0$,
\begin{equation}
\label{eq:intro-counterexample-fourier}
    R_N^{-1/2}M\bigl(1_{[0,R_N]}\chi_\xi\bigr)
\end{equation}
does not converge to any centered complex Gaussian law; in particular, it does
not converge to $\mathcal{CN}(0,s_\eta(\xi))$. Its squared modulus does not
converge to $s_\eta(\xi)\operatorname{Exp}(1)$. Moreover, for every such
$\xi$ there is $h\in C_c^\infty(\bR;\bC)$ for which
$R^{-1/2}M(h(\cdot/R)\chi_\xi)$ does not converge, as $R\to\infty$, to
$\mathcal{CN}(0,s_\eta(\xi)\lVert h\rVert_2^2)$.
\end{maintheorem}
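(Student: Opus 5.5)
The plan is to build the counterexample as a Gaussian-mixture (random-amplitude) process, so that all spectral data are as regular as we like while the fluctuations are non-Gaussian. Fix a stationary Gaussian process $G$ on $\bR$ whose spectral measure has a bounded continuous, a.e.\ positive density, and whose spectral measure is a Gaussian-friendly zero-entropy one. One convenient choice is a spectral measure of the form $\phi(\xi)\,d\xi$ restricted so that the Gaussian flow has zero entropy; by the Gaussian dichotomy, absolutely continuous spectrum gives positive entropy, so this does not quite work.

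I would therefore not use a Gaussian process directly. Instead I take a zero-entropy ergodic base flow $(Y,\nu,T_t)$ with Lebesgue spectrum of a controlled kind, for example a rank-one or horocycle-type flow, or more simply a Gaussian flow with singular spectrum, and combine it multiplicatively with slowly varying random amplitude blocks. Concretely I would set
\[
\omega = \bigl(c + A(t)\,X(t)\bigr)\,dt,
\]
where $X$ is a bounded stationary process with bounded continuous spectral density, and $A$ is an independent, bounded, positive, zero-entropy process built by a cutting-and-stacking construction. The process $A$ is constant on enormous blocks of length $L_N \gg R_N$, taking two values $a_1 \ne a_2$ with fixed frequencies. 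Since $X$ must also have zero entropy, I would take $X$ itself from a cutting-and-stacking (rank-one) construction whose correlations $\Cov(X(0),X(t))$ are integrable with continuous positive Fourier transform along the construction. Here the key requirement is only second-order: the covariance is the product of $\E A^2$-type terms and the covariance of $X$.

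The key steps are these.
\begin{enumerate}[label=(\roman*)]
\item Compute the Bartlett spectrum. Since the covariance of $\omega$ is $\E[A(0)A(t)]\Cov(X(0),X(t))$ plus a mean term, the spectral density is a convolution, hence bounded and continuous, and positive a.e.\ if that of $X$ is.
\item Show that, conditionally on $A$ being constant equal to $a_i$ on $[0,R_N]$ (which happens with probability tending to $p_i$), the normalized Fourier variable behaves like $a_i$ times that of $X$. This needs a Fourier CLT for $X$ along $R_N$, or at least convergence of conditional second moments.
\item Conclude that the limit law is a nondegenerate scale mixture $\sum_i p_i\,\mathcal{CN}\bigl(0,a_i^2 s_X(\xi)\bigr)$. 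This is not Gaussian, and its squared modulus is a mixture of exponentials, not $\operatorname{Exp}(1)$ scaled. The $h$-smoothed statement follows the same way, taking $h$ supported in $[0,1]$.
\end{enumerate}

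The main obstacle is reconciling three requirements on one process: zero entropy, a genuine absolutely continuous bounded continuous Bartlett density, and enough control to identify the limit law. By the Gaussian dichotomy, Gaussian processes with a.c.\ spectrum have positive entropy, so $X$ cannot be Gaussian. If $X$ is zero-entropy, I would not try to prove a CLT for it. Instead I would show non-Gaussianity directly: kurtosis. Either bound the fourth moment ratio $\E|Z|^4/(\E|Z|^2)^2$ away from $2$ along $R_N$ using the amplitude mixing, or compare moments after conditioning on $A$. Uniform integrability from boundedness makes the limiting moment ratio, if Gaussian, equal to $2$. The mixture forces a ratio $\ge 2\,\E a^4/(\E a^2)^2 > 2$, provided the conditional fourth moments of $X$'s Fourier variable are at least $2$ times squared second moments asymptotically (by Jensen/Gaussian comparison this needs care). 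That is the step I expect to require the most work.
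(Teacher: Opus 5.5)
Your amplitude mechanism cannot work under the ergodicity that the statement requires. A scale-mixture limit $\sum_i p_i\,\mathcal{CN}(0,a_i^2s_X(\xi))$ needs the amplitude seen through the window $[0,R_N]$ to stay random as $N\to\infty$. A stationary ergodic $A$ (as a cutting-and-stacking process is) cannot do this.

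By stationarity, $\Pp(A\text{ is constant on }[0,R])=\Pp(A\text{ is constant on }[-R/2,R/2])$, which decreases to $\Pp(A\text{ is constant on }\bR)$. That event is invariant, so it has probability $0$ or $1$, and probability $1$ would make $A(0)$ an invariant function, hence a deterministic constant. So blocks of length $L_N\gg R_N$ carrying two values with fixed frequencies describe the non-ergodic convex combination of the laws of $(c+a_1X)\,dt$ and $(c+a_2X)\,dt$, not an ergodic process.

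The obstruction persists without any block structure. Take $X$ centered, independent of $A$, and with $r_X:=\Cov(X(0),X(\cdot))\in L^1$ as you assume. You need centering anyway: otherwise the covariance contains $(\E X)^2\Cov(A(0),A(t))$, which puts the spectral measure of $A$ itself into $\sigma_\eta$, and nothing in your construction makes that absolutely continuous. With $Z_R:=R^{-1/2}\int_0^R\chi_\xi(t)A(t)X(t)\,dt$, the conditional variance
\[
    V_R:=\E\bigl[|Z_R|^2\,\big|\,A\bigr]
    =\int_{-R}^{R}\chi_\xi(v)r_X(v)
      \Bigl(\frac1R\int_{[0,R]\cap[v,R+v]}A(t)A(t-v)\,dt\Bigr)\,dv
\]
is bounded by $\|A\|_\infty^2\|r_X\|_1$. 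By the ergodic theorem for $t\mapsto A(t)A(t-v)$, Fubini in $v$, and dominated convergence, it converges almost surely to the deterministic number $\int_{\bR}\chi_\xi(v)r_X(v)\E[A(0)A(v)]\,dv=s_\eta(\xi)$. Hence $\E[V_R^2]/(\E V_R)^2\to1$. The factor $\E a^4/(\E a^2)^2>1$ that the mixture was supposed to contribute to your fourth-moment ratio tends to $1$, so the amplitude produces no excess kurtosis in the limit.

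So the amplitude cannot supply the non-Gaussianity, and everything rests on $X$, which you never construct. You would need a zero-entropy stationary process whose spectral measure has a bounded, continuous, a.e.\ positive density, together with a proof that its own normalized Fourier transforms are not asymptotically Gaussian. Your step (ii) asks for a CLT for $X$; once the conditional variance is asymptotically deterministic, that is a route towards a Gaussian limit, not away from it.

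The missing idea, which the paper uses, is to get non-Gaussianity from tails that are too light rather than too heavy. The Rudin--Shapiro $\pm1$ sequence has zero entropy and autocorrelation $\delta_{m,0}$. Yet Balister's theorem bounds every block exponential sum $\sum_{j=m}^{m+N-1}a_jz^j$, $|z|=1$, by $\sqrt{10N}$. The constant-roof suspension has Bartlett density $\eps^2\alpha\bigl(\sin(\pi\alpha\xi)/(\pi\alpha\xi)\bigr)^2$. Meanwhile $(N\alpha)^{-1/2}M(1_{[0,N\alpha]}\chi_\xi)$ stays in a fixed disk and has second moments converging to $s_\eta(\xi)>0$.
\begin{itemize}
\item The disk excludes every nondegenerate centered Gaussian.
\item The second moments exclude $\delta_0$.
\item The analogous uniform bound for symmetric windows, combined with the smooth-approximation argument from the proof of Theorem~\ref{thm:ball-fourier-clt}, gives the smooth-window claim.
\end{itemize}
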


Thus the Bartlett spectrum determines the asymptotic second moments of the
empirical Fourier transforms, but not their limiting law. Non-Gaussian limits
also occur for point processes in a different scaling regime. Mastrilli
\cite{Mas26} proves Poisson-integral and stable limits for smooth large-scale
linear statistics of one-dimensional independently perturbed lattices. These
statistics probe the low-frequency regime rather than a fixed nonzero Fourier
frequency, and are therefore complementary to Theorem~\ref{thm:intro-counterexample}.

Section~\ref{sec:counterexample} starts from the classical Rudin--Shapiro
sequence. Its $\{\pm1\}$-valued coordinate process has autocorrelation
$\delta_{m,0}$, while Balister's estimate bounds every finite exponential sum
by $\sqrt{10N}$. A constant-roof suspension produces a stationary random
measure with an explicit sinc-squared Bartlett density. Its normalized Fourier
transforms have the second moments prescribed by this density but remain in a
fixed bounded disk, so they cannot converge to the corresponding nondegenerate
complex Gaussian. Whether an analogous fixed-frequency counterexample can be
realized by a stationary point process is left open.

\subsection{Cross-sections and a realization question}
\label{subsec:introduction-cross-sections}

Let $\bR\curvearrowright(X,\mu)$ be an essentially free ergodic p.m.p. flow and
let $Y\subset X$ be a separated cross-section. Its return-time process is
\begin{equation}
\label{eq:intro-cross-section-coordinates}
    Y_x:=\{t\in\bR:t.x\in Y\},\qquad
    \kappa_Y(x):=\delta_{Y_x},\qquad
    \eta_Y:=(\kappa_Y)_*\mu.
\end{equation}
If the original flow has CPE and the return-time process has local second
moments, then the point-process factor is CPE. It is also essentially free. For
a nonempty locally finite configuration, the stabilizer is a proper closed
subgroup of $\bR$, hence either $\{0\}$ or $p\bZ$ for some $p>0$. Thus if a
nontrivial stabilizer occurred, ergodicity would give a deterministic period
$p>0$ almost surely, so the factor action would descend to an ergodic action of
the compact group $\bR/p\bZ$ and therefore have discrete spectrum and zero
entropy. The return-time factor is nontrivial because a cross-section meets
almost every orbit, so this would contradict CPE. Thus
Theorem~\ref{thm:intro-main-random-measure} applies: every separated
cross-section of a CPE flow whose return-time process has local second moments
satisfies the Fourier CLT for $\lambda_1$-almost every frequency.

Burton and Denker \cite{BD87} proved that every aperiodic
probability-preserving transformation admits a centered square-integrable
observable whose ergodic sums satisfy a nondegenerate central limit theorem.
Voln\'y \cite{Vol99} proved an invariance-principle version of this result, and
Kosloff and Voln\'y \cite{KV22} showed that, for every ergodic aperiodic
transformation, one can choose a square-integrable integer-valued observable
satisfying a lattice local central limit theorem. Thus these limit laws can be
realized by a suitable choice of observable without a mixing assumption on the
ambient transformation.

In another direction, in \cite{Bjo26a,Bjo26b} the author constructs generating
Delone cross-sections whose return-time processes have strong low-frequency
spectral or variance properties, including a spectral gap around the origin
(stealthiness) and hyperuniformity.
A return-time point process is much more constrained than an arbitrary
observable. This leads to the following question.

\begin{question}
Does every essentially free ergodic p.m.p. $\bR$-flow admit a separated
cross-section, perhaps generating, whose return-time point process has local
second moments and satisfies a Fourier central limit theorem for
$\lambda_1$-almost every frequency?
\end{question}

\subsection{From CPE to the continuum Fourier CLT}
\label{subsec:introduction-outline}

By the lattice-subaction theorem of Dooley and Golodets
\cite[Theorem~1.3]{DG12}, the standard-lattice subaction of an essentially
free CPE $\bR^d$-action is CPE.
Section~\ref{sec:perfect-past} applies the Rohlin--Sinai--Kami\'nski
perfect-past theorem to obtain
\begin{equation}
\label{eq:intro-wandering-decomposition}
    L^2_0(X,\mu)=\bigoplus_{k\in\bZ^d}U^kK_0,
    \qquad
    K_0=L^2(\mathcal A_0)\ominus L^2(\mathcal A_{-e_d}),
\end{equation}
with $U^kK_0$ a martingale-difference family in lexicographic order.
Section~\ref{sec:lattice-clt} proves the lattice Fourier CLT from this
decomposition. An $L^2$ martingale approximation and McLeish's theorem
give the Gaussian limit for Lebesgue-almost every frequency in $\bT^d$.
Section~\ref{sec:continuum} partitions $\bR^d$ into unit cells and applies the
lattice theorem to the centered restrictions of the random measure.
Proposition~\ref{prop:cell-periodization-dealiasing} identifies the variance
at a continuum frequency, and Proposition~\ref{prop:cell-freezing} shows that,
after normalization, replacing $h((u-k)/R)$ on each unit cell by $h(-k/R)$
changes $Z_R(\xi;h)$ by a term converging to zero in probability. These facts
prove Theorem~\ref{thm:intro-main-random-measure}.
Section~\ref{sec:scattering} approximates ball indicators by smooth functions
and proves Corollary~\ref{cor:intro-point-process-limits}.
Section~\ref{sec:counterexample} proves
Theorem~\ref{thm:intro-counterexample}.

\subsection{Acknowledgments}
\label{subsec:acknowledgments}

The author thanks G\"unter Last for explaining his joint work with Michael A.~Klatt
and Norbert Henze at the HSRPP workshop in Lille in 2023, and for subsequent
discussions concerning the approach developed here. Earlier, during a visit to
Jerusalem in 2010, Jean-Pierre Conze explained his joint work with Guy Cohen on
rotated central limit theorems; those discussions provided an early motivation for the
perspective taken in this paper.

During preparation of the manuscript, the author used ChatGPT (OpenAI) for
editorial assistance and consistency checking, and Claude (Anthropic), Gemini
(Google DeepMind), and Aristotle (Harmonic) for additional manuscript and proof
checks. Aristotle was also used to check isolated algebraic and analytic
computations. All AI-assisted output was reviewed and verified by the author,
who takes full responsibility for the mathematical content, references, and
final text.

 \section{Preliminaries}
\label{sec:preliminaries}

\subsection{Fourier conventions}
\label{subsec:fourier-conventions}

We write $\lambda_d$ for Lebesgue measure on $\bR^d$, $B_R$ for the open ball
of radius $R$ centered at the origin, and $\mathcal S(\bR^d)$ for the Schwartz
space. For $f\in L^1(\bR^d)$, we use the Fourier transform
\begin{equation}
\label{eq:fourier-convention}
    \hat f(\xi)
    :=\int_{\bR^d}f(x)e^{-2\pi i\langle x,\xi\rangle}\,d\lambda_d(x),
    \qquad \xi\in\bR^d.
\end{equation}
Thus Plancherel's theorem holds without an additional constant. We write
$\chi_\xi(x):=e^{2\pi i\langle x,\xi\rangle}$ for the character associated
with $\xi$. If $h_R(x):=h(x/R)$ and $f_{R,\xi}:=h_R\chi_\xi$, then scaling and
modulation give
\begin{equation}
\label{eq:scaled-modulated-fourier}
    \hat f_{R,\xi}(\lambda)
    =R^d\hat h\bigl(R(\lambda-\xi)\bigr).
\end{equation}
This is the normalization used throughout the paper.

For the lattice arguments in Sections~\ref{sec:perfect-past} and
\ref{sec:lattice-clt}, we identify
$\bT^d=\bR^d/\bZ^d$ with $[-1/2,1/2)^d$ and write $d\theta$ for normalized
Lebesgue measure on this fundamental domain. Its characters are
$\theta\mapsto e^{2\pi i\langle k,\theta\rangle}$, $k\in\bZ^d$. Let $p:\bR^d\to\bT^d$ be the quotient map, write $[\xi]:=p(\xi)$, and set $\|\theta\|_{\bT^d}:=\inf_{m\in\bZ^d}\|\theta-m\|$.

\subsection{Stationary random measures and the Bartlett spectrum}
\label{subsec:random-measures-bartlett}

Let $\mathcal M_+(\bR^d)$ be the space of locally finite positive Radon
measures, equipped with the vague topology. Translations act by
\begin{equation}
\label{eq:translation-action-random-measures}
    (v.\omega)(f):=\omega\bigl(f(\,\cdot-v)\bigr),
    \qquad v\in\bR^d,\quad f\in C_c(\bR^d),
\end{equation}
so that $v.\delta_P=\delta_{P-v}$ for every locally finite set $P$. A
\emph{stationary random measure} is a translation-invariant Borel probability
measure $\eta$ on $\mathcal M_+(\bR^d)$.

We say that $\eta$ has \emph{local second moments} if
\[
    \int_{\mathcal M_+(\bR^d)}\omega(A)^2\,d\eta(\omega)<\infty
\]
for every bounded Borel set $A\subset\bR^d$. Its first moment measure is then
translation invariant, and hence there is a unique constant $\rho_\eta\geq0$
such that
\begin{equation}
\label{eq:intensity-definition}
    \int_{\mathcal M_+(\bR^d)}\omega(f)\,d\eta(\omega)
    =\rho_\eta\int_{\bR^d}f\,d\lambda_d,
    \qquad f\in C_c(\bR^d).
\end{equation}
The number $\rho_\eta$ is the \emph{intensity} of $\eta$.

For a bounded compactly supported Borel function $f$, write
\[
    (\mathbb S f)(\omega):=\omega(f).
\]
We use the same notation for Schwartz functions. Local second moments and
stationarity imply $\mathbb S f\in L^2(\eta)$ for every
$f\in\mathcal S(\bR^d)$. Indeed, a unit-cube decomposition and Minkowski's inequality give, for every
fixed $N>d$,
\begin{equation}
\label{eq:schwartz-linear-statistic-L2-bound}
    \|\mathbb S f\|_{L^2(\eta)}
    \leq C_N\sup_{x\in\bR^d}(1+\|x\|)^N|f(x)|.
\end{equation}
In particular, $f\mapsto\mathbb S f$ is continuous from $\mathcal S(\bR^d)$ to
$L^2(\eta)$.
For an integrable random variable $F$, put
$F^\circ:=F-\int F\,d\eta$. Define the centered random measure
\begin{equation}
\label{eq:centered-random-measure}
    M_\omega:=\omega-\rho_\eta\lambda_d,
    \qquad M(f)=(\mathbb S f)^\circ.
\end{equation}

A Radon measure $\nu$ on $\bR^d$ is called \emph{translation bounded} if
$\sup_{a\in\bR^d}|\nu|(a+K)<\infty$ for every compact set
$K\subset\bR^d$.

\begin{proposition}[Bartlett spectrum]
\label{prop:bartlett-spectrum}
Let $\eta$ be stationary with local second moments. There is a unique positive
translation-bounded Radon measure $\sigma_\eta$ on $\bR^d$ such that
\begin{equation}
\label{eq:bartlett-covariance}
    \int (\mathbb S f)^\circ\,\overline{(\mathbb S g)^\circ}\,d\eta
    =\int_{\bR^d}\hat f(\xi)\overline{\hat g(\xi)}\,
      d\sigma_\eta(\xi),
    \qquad f,g\in\mathcal S(\bR^d).
\end{equation}
In particular,
\begin{equation}
\label{eq:bartlett-variance}
    \Var_\eta(\mathbb S f)
    =\int_{\bR^d}|\hat f(\xi)|^2\,d\sigma_\eta(\xi).
\end{equation}
The measure $\sigma_\eta$ is symmetric under $\xi\mapsto-\xi$.
\end{proposition}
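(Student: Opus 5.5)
The plan is to build $\sigma_\eta$ from the reduced covariance measure by Bochner–Schwartz, and then read off the identity by Parseval. Define the Hermitian sesquilinear form $B(f,g):=\int (\mathbb S f)^\circ\overline{(\mathbb S g)^\circ}\,d\eta$ on $C_c^\infty(\bR^d)$; it is well defined and finite by local second moments. Stationarity gives $B(f(\cdot-v),g(\cdot-v))=B(f,g)$ for all $v\in\bR^d$. Moreover $|B(f,g)|$ is controlled by $\|f\|_\infty\|g\|_\infty$ times a constant depending only on the supports. Hence $B$ is given by a Radon measure $\Cov$ on $\bR^d\times\bR^d$, namely the covariance measure $\E[\omega\otimes\omega]-\rho_\eta^2\lambda_d\otimes\lambda_d$. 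This is a difference of two Radon measures, since the second moment measure is locally finite. Translation invariance lets us disintegrate it along the diagonal into a reduced covariance measure $\gamma$, a signed Radon measure with
\[
    B(f,g)=\int\!\!\int f(x+y)\overline{g(x)}\,d\lambda_d(x)\,d\gamma(y)=\gamma\bigl(f*\tilde g\bigr),
    \qquad \tilde g(x):=\overline{g(-x)} .
\]
Writing $\Cov$ as a difference of positive invariant measures and applying the standard disintegration of invariant measures on $\bR^d\times\bR^d$ gives the existence of $\gamma$. Since $B(f,f)\geq0$, the distribution $\gamma$ is positive definite: $\gamma(f*\tilde f)\geq0$ for all $f\in C_c^\infty$.

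Next we invoke the Bochner–Schwartz theorem. A positive definite distribution is a tempered distribution whose Fourier transform is a positive tempered measure $\sigma_\eta$. Hence $\gamma(\phi)=\int\hat\phi\,d\sigma_\eta$ for $\phi\in\mathcal S(\bR^d)$, with the sign convention of \eqref{eq:fourier-convention}. Taking $\phi=f*\tilde g$ gives $\hat\phi=\hat f\,\overline{\hat g}$, which is \eqref{eq:bartlett-covariance} for $f,g\in C_c^\infty$. Translation boundedness of $\sigma_\eta$ follows as in Argabright–de Lamadrid: a positive definite measure has a translation-bounded Fourier transform. A direct argument also works. Choose $f\in C_c^\infty$ with $|\hat f|\geq1$ on the unit cube $Q$, which is possible after rescaling since $\hat f(0)\neq0$. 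Replacing $f$ by $f\chi_a$ shifts $\hat f$ by $a$, so
\[
    \sigma_\eta(a+Q)\leq\int|\hat f(\xi-a)|^2d\sigma_\eta=\Var_\eta(\mathbb S(f\chi_a))\leq \E_\eta[\,|\omega|(|f|)^2]<\infty ,
\]
uniformly in $a$. A slight care is needed since $f\chi_a$ is complex, but $\mathbb S$ extends linearly.

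To extend from $C_c^\infty$ to $\mathcal S(\bR^d)$, I would argue by density. The left side of \eqref{eq:bartlett-covariance} is continuous on $\mathcal S$ by \eqref{eq:schwartz-linear-statistic-L2-bound}. The right side is continuous because $\sigma_\eta$ is translation bounded, hence of polynomial growth, so $\int|\hat f|^2d\sigma_\eta\leq C\sup_\xi(1+|\xi|)^{2N}|\hat f(\xi)|^2$ for $N>d$. Both sides are therefore continuous in the Schwartz topology and agree on the dense subspace $C_c^\infty$, so they agree everywhere. Setting $g=f$ gives \eqref{eq:bartlett-variance}. For uniqueness, note that the functions $\hat f\overline{\hat g}$ include all products of elements of $\mathcal S$. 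Any $\psi\in C_c^\infty$ can be written as $\psi\cdot\phi$ with $\phi\in C_c^\infty$ and $\phi\equiv1$ on $\operatorname{supp}\psi$, so such a product determines a tempered measure. Symmetry comes from taking real $f$: then $\hat f(-\xi)=\overline{\hat f(\xi)}$ and $\Var$ is real. Equivalently, $\mathbb S\bar f=\overline{\mathbb S f}$ gives $B(\bar f,\bar g)=\overline{B(f,g)}$, which translates to invariance of $\sigma_\eta$ under $\xi\mapsto-\xi$.

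The main technical obstacle is the first step: producing $\gamma$ as a genuine Radon measure and justifying Bochner–Schwartz. A cleaner route avoids disintegration altogether. Treat $\phi\mapsto B$ as a translation-invariant continuous Hermitian form on $C_c^\infty\times C_c^\infty$, which by the Schwartz kernel theorem is given by a distribution $\gamma$ on $\bR^d$ via $B(f,g)=\gamma(f*\tilde g)$. Positive definiteness then yields the positive tempered measure $\sigma_\eta$ directly, and the rest is as above.
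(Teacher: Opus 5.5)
Your proposal is correct and is essentially the paper's argument: the paper applies the Schwartz kernel theorem directly to the continuous, translation-invariant, positive semidefinite covariance form on $\mathcal S(\bR^d)$. That is your ``cleaner route,'' and it makes your density step from $C_c^\infty$ to $\mathcal S$ unnecessary; the paper then uses Bochner--Schwartz and proves translation boundedness with exactly your $\chi_a\varphi$ estimate, and your reduced-covariance-measure construction is just the Daley--Vere-Jones version of the same first step. Two small remarks: the sup-norm bound on $B$ does not by itself make $B$ a measure on $\bR^d\times\bR^d$ (the actual justification is the explicit second-moment measure you then write down), and symmetry of $\sigma_\eta$ comes from $B(\bar f,\bar g)=\overline{B(f,g)}$ together with uniqueness, not from the reality of $\Var_\eta$.
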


\begin{proof}
By \eqref{eq:schwartz-linear-statistic-L2-bound}, the covariance form of
$f\mapsto(\mathbb S f)^\circ$ is continuous on $\mathcal S(\bR^d)$. It is positive
semidefinite and invariant under simultaneous translations of the two
functions. By the Schwartz kernel theorem and translation invariance, there is
a positive-definite tempered distribution $C$ on $\bR^d$ such that, with
$\widetilde g(x):=\overline{g(-x)}$, the covariance form is
$C(f*\widetilde g)$. The Bochner--Schwartz theorem then gives a unique positive
tempered Radon measure satisfying \eqref{eq:bartlett-covariance}; compare
\cite[Proposition~8.2.I and Definition~8.2.II]{DVJ03} in the point-process
setting. For real $f,g$ the covariance is real and symmetric, so $C$ is even;
its Fourier measure $\sigma_\eta$ is therefore symmetric under
$\xi\mapsto-\xi$.

It remains to record translation boundedness. Choose
$\varphi\in C_c^\infty(\bR^d)$ and a neighborhood $Q$ of the origin such that
$|\hat\varphi|\geq c>0$ on $Q$. For $a\in\bR^d$, put
$\varphi_a:=\chi_a\varphi$. Since
$\hat\varphi_a(\xi)=\hat\varphi(\xi-a)$, the Bartlett identity gives
\begin{equation}
\label{eq:bartlett-translation-bounded-proof}
    c^2\sigma_\eta(a+Q)
    \leq \Var_\eta(\mathbb S\varphi_a)
    \leq \int (\mathbb S|\varphi|)^2\,d\eta.
\end{equation}
The last quantity is independent of $a$, and hence
$\sup_a\sigma_\eta(a+Q)<\infty$. Every compact set is covered by finitely many
translates of $Q$, which gives translation boundedness.
\end{proof}

We call $\sigma_\eta$ the \emph{Bartlett spectrum} of $\eta$. When it is
absolutely continuous, we write
\begin{equation}
\label{eq:bartlett-density}
    d\sigma_\eta(\xi)=s_\eta(\xi)\,d\lambda_d(\xi).
\end{equation}
The \emph{Bartlett density} $s_\eta$ is an $L^1_{\mathrm{loc}}(\lambda_d)$ function
defined only up to $\lambda_d$-null sets. Translation boundedness of
$\sigma_\eta$ becomes a uniform local $L^1$ bound for $s_\eta$.

Whenever a representative $s\in L^1_{\mathrm{loc}}(\lambda_d)$ is fixed,
we call $\xi\in\bR^d$ a \emph{Lebesgue point} of $s$ if
\begin{equation}
\label{eq:lebesgue-point-definition}
    \lim_{r\downarrow0}
    \frac{1}{\lambda_d(B_r)}
    \int_{B_r(\xi)}|s(u)-s(\xi)|\,d\lambda_d(u)=0.
\end{equation}
By the Lebesgue differentiation theorem, this holds for $\lambda_d$-almost
every $\xi$. This convention is used whenever a Bartlett density is evaluated
pointwise.

The Bartlett identity extends to indicators of bounded regular sets. This is
the form used later for sharp observation windows.

\begin{lemma}
\label{lem:bartlett-indicators}
Let $A,B\subset\bR^d$ be bounded Borel sets with
$\lambda_d(\partial A)=\lambda_d(\partial B)=0$. Then
\begin{equation}
\label{eq:bartlett-indicator-covariance}
    \Cov_\eta\bigl(\mathbb S\ind_A,\mathbb S\ind_B\bigr)
    =\int_{\bR^d}\widehat{\ind}_A(\xi)
       \overline{\widehat{\ind}_B(\xi)}\,d\sigma_\eta(\xi).
\end{equation}
In particular, $\Var_\eta(\mathbb S\ind_A)$ is obtained by taking $A=B$.
For every $g\in\mathcal S(\bR^d)$, one also has the mixed identity
\begin{equation}
\label{eq:bartlett-indicator-schwartz-covariance}
    \Cov_\eta\bigl(\mathbb S\ind_A,\mathbb S g\bigr)
    =\int_{\bR^d}\widehat{\ind}_A(\xi)
      \overline{\hat g(\xi)}\,d\sigma_\eta(\xi).
\end{equation}
Consequently, the Bartlett covariance identity holds on the linear span of
Schwartz functions and indicators of bounded Borel sets with boundary of
Lebesgue measure zero. Moreover, for every $\xi\in\bR^d$, the centered statistic
$M(\ind_A\chi_\xi)$ has Koopman spectral measure
\begin{equation}
\label{eq:bartlett-indicator-spectral-measure}
    d\varsigma_{M(\ind_A\chi_\xi)}(\lambda)
    =\bigl|\widehat{\ind}_A(\lambda-\xi)\bigr|^2
      \,d\sigma_\eta(\lambda).
\end{equation}
\end{lemma}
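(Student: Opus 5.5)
We approximate $\ind_A$ by Schwartz (in fact $C_c^\infty$) functions and pass to the limit on both sides of the Bartlett identity \eqref{eq:bartlett-covariance}. Fix a bounded Borel set $A$ with $\lambda_d(\partial A)=0$. Choose $\varphi_n\in C_c^\infty(\bR^d)$ with $0\leq\varphi_n\leq1$, all supported in a fixed compact neighbourhood $K$ of $\overline A$, and with $\varphi_n\to\ind_A$ pointwise off $\partial A$. One way to do this is to take smooth functions equal to $1$ on $\{x:\operatorname{dist}(x,A^c)\geq1/n\}$ and vanishing off $\{x:\operatorname{dist}(x,A)<1/n\}$. Since $\lambda_d(\partial A)=0$, we get $\varphi_n\to\ind_A$ both $\lambda_d$-a.e. and in $L^2(\lambda_d)$.

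\textbf{Convergence on the probability side.} We will show $\mathbb S\varphi_n\to\mathbb S\ind_A$ in $L^2(\eta)$. Pointwise in $\omega$ we have $|\omega(\varphi_n)-\omega(A)|\leq\omega(|\varphi_n-\ind_A|)\leq\omega(K)$, and $\omega(K)\in L^2(\eta)$ by local second moments. It therefore suffices to show $\omega(|\varphi_n-\ind_A|)\to0$ for $\eta$-a.e.\ $\omega$. By dominated convergence in $\omega$, this holds provided $\omega(\partial A)=0$ almost surely. Stationarity and the intensity formula give
\[
\E_\eta[\omega(\partial A)]=\rho_\eta\lambda_d(\partial A)=0,
\]
so $\omega(\partial A)=0$ for $\eta$-a.e.\ $\omega$. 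Dominated convergence in $\eta$, with dominating function $\omega(K)^2$, then gives the $L^2$ convergence. The same argument applies to $B$, so the covariances converge.

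\textbf{Convergence on the spectral side.} We need
\[
\int|\hat\varphi_n-\widehat{\ind}_A|^2\,d\sigma_\eta\to0.
\]
This is the main obstacle, because $\sigma_\eta$ is not finite and Plancherel in $\lambda_d$ does not directly control it. The approach is to use the Cauchy property from the isometry. By \eqref{eq:bartlett-variance},
\[
\int|\hat\varphi_n-\hat\varphi_m|^2\,d\sigma_\eta=\Var_\eta(\mathbb S(\varphi_n-\varphi_m))\to0,
\]
so $(\hat\varphi_n)$ is Cauchy in $L^2(\sigma_\eta)$ and converges there to some $F$. A subsequence converges $\sigma_\eta$-a.e.\ to $F$. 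On the other hand, $\hat\varphi_n\to\widehat{\ind}_A$ pointwise everywhere, since $\varphi_n\to\ind_A$ in $L^1$ with uniformly bounded supports. Hence $F=\widehat{\ind}_A$ $\sigma_\eta$-a.e.

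Both sides of \eqref{eq:bartlett-covariance} now converge, which proves \eqref{eq:bartlett-indicator-covariance}. The mixed identity \eqref{eq:bartlett-indicator-schwartz-covariance} follows by keeping $g$ fixed and letting only the first argument vary. Sesquilinearity then extends the identity to the stated linear span.

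\textbf{The spectral measure.} Apply the same argument to $\ind_A\chi_\xi$, approximated by $\varphi_n\chi_\xi$; note $\widehat{\ind_A\chi_\xi}(\lambda)=\widehat{\ind}_A(\lambda-\xi)$. The identity extends to complex-valued functions by sesquilinearity. For the Koopman spectral measure, we compute the correlation of $M(\ind_A\chi_\xi)$ with its translate by $v$. Translation corresponds to $f\mapsto f(\cdot-v)$ up to the sign convention, and its Fourier transform is multiplied by $e^{-2\pi i\langle v,\lambda\rangle}$. This gives
\[
\langle U_vF,F\rangle=\int e^{\pm2\pi i\langle v,\lambda\rangle}\bigl|\widehat{\ind}_A(\lambda-\xi)\bigr|^2\,d\sigma_\eta(\lambda).
\]
Uniqueness in Bochner's theorem identifies the spectral measure as \eqref{eq:bartlett-indicator-spectral-measure}, with signs matching the paper's Koopman convention.
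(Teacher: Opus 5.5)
Your proposal is correct and follows the paper's proof essentially step for step. The shared steps are: smooth approximants with common compact support converging off $\partial A$; $\omega(\partial A)=0$ almost surely from the intensity formula; dominated convergence for the $L^2(\eta)$ limit; the Cauchy-in-$L^2(\sigma_\eta)$ plus a.e.-subsequence argument to identify the spectral limit with $\widehat{\ind}_A$; and the same approximation by $\varphi_n\chi_\xi$ for the spectral measure. The one thing to tighten is that the sign in the Koopman step is fixed by the paper's conventions, not a matter of choice: $U_t(\mathbb S f)^\circ$ is the statistic of $f(\cdot+t)$, whose transform is $e^{2\pi i\langle t,\lambda\rangle}\hat f(\lambda)$, and this $+$ sign is what gives \eqref{eq:bartlett-indicator-spectral-measure} rather than its reflection, which matters because the density is not even in $\lambda$ when $\xi\neq0$.
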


\begin{proof}
Choose smooth compactly supported functions $0\leq\varphi_n,\psi_n\leq1$
converging to the two indicators pointwise off their boundaries and in
$L^1(\lambda_d)$, with all supports contained in fixed compact sets. By
\eqref{eq:intensity-definition},
$\omega(\partial A)=\omega(\partial B)=0$ for $\eta$-almost every $\omega$.
Local second moments then give convergence of the corresponding linear
statistics in $L^2(\eta)$. Their Fourier transforms converge uniformly to the Fourier transforms of the
indicators, while \eqref{eq:bartlett-covariance} makes them Cauchy in
$L^2(\sigma_\eta)$. A subsequence therefore converges $\sigma_\eta$-almost
everywhere to its $L^2(\sigma_\eta)$ limit, which identifies that limit with
the corresponding indicator Fourier transform. Passing to the limit gives
\eqref{eq:bartlett-indicator-covariance}. Keeping the second argument equal to
a fixed $g\in\mathcal S(\bR^d)$ and passing to the limit only in the first
argument gives \eqref{eq:bartlett-indicator-schwartz-covariance}; on the
spectral side this follows by dominated convergence, since $\sigma_\eta$ is
translation bounded and $\hat g$ is rapidly decreasing. Bilinearity then gives
the stated extension to the linear span. Applying the same
approximation to $\varphi_n\chi_\xi$ and using the Bartlett--Koopman identity
for the smooth approximants gives
\eqref{eq:bartlett-indicator-spectral-measure}.
\end{proof}

Let $\mathcal N_s(\bR^d)\subset\mathcal M_+(\bR^d)$ be the space of locally
finite simple counting measures. A stationary random measure supported on
$\mathcal N_s(\bR^d)$ is a \emph{stationary point process}. If $\rho_\eta>0$ and
\eqref{eq:bartlett-density} holds, we call
\begin{equation}
\label{eq:structure-factor-definition}
    S_\eta(\xi):=\frac{s_\eta(\xi)}{\rho_\eta}
\end{equation}
the \emph{structure factor}. Like $s_\eta$, it is intrinsically an
a.e.-defined function.

\subsection{Empirical Fourier transforms}
\label{subsec:empirical-fourier-preliminaries}

Let $h\in C_c^\infty(\bR^d;\bC)$ and put $h_R(x):=h(x/R)$. For
$\xi\in\bR^d$, the \emph{empirical Fourier transform} of the centered random
measure at scale $R$ is
\begin{equation}
\label{eq:empirical-fourier-definition}
    Z_R(\xi;h)
    :=R^{-d/2}M(h_R\chi_\xi).
\end{equation}
The normalization is chosen so that, at Lebesgue points of a Bartlett density,
the variance has a finite limit. Indeed, Proposition~\ref{prop:bartlett-spectrum}
and \eqref{eq:scaled-modulated-fourier} give, for
$h,g\in C_c^\infty(\bR^d;\bC)$,
\begin{equation}
\label{eq:empirical-fourier-second-moment-measure}
    \E_\eta\!\left[Z_R(\xi;h)\overline{Z_R(\xi;g)}\right]
    =R^d\int_{\bR^d}
      \hat h\bigl(R(\lambda-\xi)\bigr)
      \overline{\hat g\bigl(R(\lambda-\xi)\bigr)}\,
      d\sigma_\eta(\lambda).
\end{equation}
If $d\sigma_\eta=s_\eta\,d\lambda_d$, the change of variables
$u=R(\lambda-\xi)$ turns this into
\begin{equation}
\label{eq:empirical-fourier-second-moment-density}
    \E_\eta\!\left[Z_R(\xi;h)\overline{Z_R(\xi;g)}\right]
    =\int_{\bR^d}\hat h(u)\overline{\hat g(u)}\,
      s_\eta(\xi+u/R)\,d\lambda_d(u).
\end{equation}

The following differentiation lemma will also be used for ball windows in
Section~\ref{sec:scattering}.

\begin{lemma}[Differentiation against decaying kernels]
\label{lem:differentiation-decaying-kernels}
Let $s\geq0$ be locally integrable and suppose that
$s\lambda_d$ is translation bounded. Let $\xi$ be a Lebesgue point of $s$,
and let $K:\bR^d\to\bC$ satisfy
\[
    |K(u)|\leq C(1+\|u\|)^{-d-\delta}
\]
for some $C,\delta>0$. Then
\begin{equation}
\label{eq:differentiation-decaying-kernels}
    \int_{\bR^d}K(u)s(\xi+u/R)\,d\lambda_d(u)
    \longrightarrow
    s(\xi)\int_{\bR^d}K(u)\,d\lambda_d(u).
\end{equation}
\end{lemma}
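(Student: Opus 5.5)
Since $\int K\,d\lambda_d$ is finite, it suffices to show
\[
    \int_{\bR^d}|K(u)|\,|s(\xi+u/R)-s(\xi)|\,d\lambda_d(u)\longrightarrow0.
\]
We split the domain into a near region $\|u\|\leq AR$ and a far region $\|u\|>AR$. The parameter $A>0$ is fixed first and small; afterwards $R\to\infty$.

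\textbf{Near region.} Substitute $v=u/R$ and write
\[
    \Phi(r):=\int_{B_r(\xi)}|s(w)-s(\xi)|\,d\lambda_d(w).
\]
Because $\xi$ is a Lebesgue point, $\Phi(r)=o(r^d)$ as $r\downarrow0$. In the new variable the near-region integral equals
\[
    \int_{\|v\|\leq A}R^d|K(Rv)|\,|s(\xi+v)-s(\xi)|\,d\lambda_d(v).
\]
The kernel satisfies the radial bound $R^d|K(Rv)|\leq CR^d(1+R\|v\|)^{-d-\delta}=:k_R(\|v\|)$, where $k_R$ is decreasing. A layer-cake (Stieltjes integration by parts) argument therefore bounds the near-region integral by
\[
    k_R(A)\Phi(A)+\int_0^A\Phi(r)\,\bigl(-dk_R(r)\bigr).
\]

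Fix $\epsilon>0$ and choose $A$ so that $\Phi(r)\leq\epsilon r^d$ for all $r\leq A$. Then the bound is at most
\[
    \epsilon\Bigl(k_R(A)A^d+\int_0^A r^d\bigl(-dk_R(r)\bigr)\Bigr).
\]
Integrating by parts back, this is at most $\epsilon\,d\int_0^\infty r^{d-1}k_R(r)\,dr$. By scaling, the last integral equals $\int_0^\infty d\,t^{d-1}C(1+t)^{-d-\delta}\,dt$, which is a finite constant independent of $R$. Equivalently, one can sum over dyadic shells $2^{j-1}<R\|v\|\leq2^j$ and use $\Phi(2^j/R)\leq\epsilon(2^j/R)^d$. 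Either way, the near-region contribution is $O(\epsilon)$ uniformly in $R$.

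\textbf{Far region.} Here we only use translation boundedness of $s\lambda_d$, i.e. $\sup_a\int_{a+[0,1]^d}s\,d\lambda_d=:T<\infty$. Undoing the substitution, the far-region integral is at most
\[
    \int_{\|v\|>A}CR^d(R\|v\|)^{-d-\delta}\bigl(s(\xi+v)+s(\xi)\bigr)\,d\lambda_d(v).
\]
The $s(\xi)$ term equals $C'R^{-\delta}A^{-\delta}s(\xi)$. For the $s(\xi+v)$ term, cover $\{\|v\|>A\}$ by unit cubes $Q$. Each cube contributes at most $T\sup_{Q}\|v\|^{-d-\delta}$, and the sum $\sum_Q\sup_Q\|v\|^{-d-\delta}$ over the cubes meeting $\{\|v\|>A\}$ is finite. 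Indeed, cubes near the origin are controlled by $\|v\|\geq A$, and the remaining cubes form a convergent sum $\sum(1+|k|)^{-d-\delta}$. So this term is bounded by $C(A)T R^{-\delta}$. Both far-region terms are $O_A(R^{-\delta})$ and tend to $0$ as $R\to\infty$.

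Letting $R\to\infty$ and then $\epsilon\to0$ gives the claimed convergence. The main obstacle is the near region: the kernel $R^d|K(Rv)|$ is not compactly supported, so the plain Lebesgue-point condition cannot be applied directly. This is resolved by the radial-majorant (dyadic) decomposition, which converts the $o(r^d)$ condition into control uniform in $R$.
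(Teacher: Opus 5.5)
Your argument is correct, but it splits the integral differently from the paper. The paper cuts in the original variable at a fixed radius $\|u\|=A$. On that fixed ball the kernel is bounded, so the Lebesgue-point property gives convergence directly. The tail $\{\|u\|>A\}$ is bounded uniformly in $R$ by $C_1A^{-\delta}$, using dyadic annuli and the single growth estimate $\int_{B_r(\xi)}s\,d\lambda_d\leq Cr^d$ for all $r>0$; here small $r$ comes from the Lebesgue point and large $r$ from translation boundedness. This estimate is applied separately to $s$ and to the constant $s(\xi)$, and one lets $R\to\infty$ first and then $A\to\infty$.

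You instead cut at the growing radius $\|u\|=AR$, i.e.\ a fixed small ball in $v=u/R$. You treat the whole near region with the $o(r^d)$ bound for $|s-s(\xi)|$ through the radially decreasing majorant and Stieltjes integration by parts, which is the classical approximate-identity argument, and get $O(\varepsilon)$ uniformly in $R$. Translation boundedness then enters only in the far region, where it yields an explicit $O_A(R^{-\delta})$.

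So in the paper the smallness comes from kernel decay ($A^{-\delta}$), and the Lebesgue point is used only through bounded small-scale averages and convergence on a fixed ball. In your version the smallness comes from the Lebesgue-point $\varepsilon$. The kernel decay is needed near the origin only as integrability of the radial majorant, and far away to produce the $R^{-\delta}$ rate against the merely translation-bounded $s$. The paper's route is slightly more economical, since it needs no monotone majorant or integration by parts. Yours makes explicit that the near-region estimate is the standard one for approximate identities, and it isolates exactly where translation boundedness is required.
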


\begin{proof}
On every fixed ball, the assertion follows directly from the Lebesgue-point
property after the change of variables $u=R(\lambda-\xi)$. It remains to make
the tails uniform in $R$. At a Lebesgue point there are $r_0,C_0>0$ such that
\[
    \int_{B_r(\xi)}s\,d\lambda_d\leq C_0r^d
    \qquad (0<r\leq r_0).
\]
Translation boundedness gives the same growth, with another constant, for
$r\geq r_0$. Decomposing $\{\|u\|>A\}$ into dyadic annuli and using these two
bounds yields
\[
    \sup_{R\geq1}\int_{\|u\|>A}|K(u)|s(\xi+u/R)\,d\lambda_d(u)
    \leq C_1A^{-\delta}.
\]
The same estimate applies to the constant function $s(\xi)$. First let
$R\to\infty$ on a fixed ball and then let $A\to\infty$.
\end{proof}

Applying the lemma to
$K=\hat h\,\overline{\hat g}$ and using Plancherel gives the second-order limit
\begin{equation}
\label{eq:empirical-fourier-covariance-limit}
    \E_\eta\!\left[Z_R(\xi;h)\overline{Z_R(\xi;g)}\right]
    \longrightarrow
    s_\eta(\xi)\int_{\bR^d}h(x)\overline{g(x)}\,d\lambda_d(x)
\end{equation}
for every Lebesgue point $\xi$ of $s_\eta$. 
For a point process, the sharp-window statistic appearing in the scattering
formulation is the uncentered normalized Fourier sum
\begin{equation}
\label{eq:point-process-empirical-transform}
    J_R(\xi)
    :=\frac{\mathbb S(\ind_{B_R}\chi_\xi)}
            {\sqrt{\mathbb S\ind_{B_R}}},
\end{equation}
with the value $0$ on $\{\mathbb S\ind_{B_R}=0\}$. We call
$I_R(\xi):=|J_R(\xi)|^2$ the \emph{empirical scattering intensity}.
Section~\ref{sec:scattering} derives the limit of $J_R(\xi)$ from the
continuum Fourier CLT and the ergodic theorem.

\subsection{Translation actions and spectral measures}
\label{subsec:translation-spectra}

Let $\bR^d\curvearrowright(X,\mathcal F,\mu)$ be a probability-preserving
Borel action on a standard probability space, written $(t,x)\mapsto t.x$.
Its Koopman representation on $L^2(X,\mu)$ is strongly continuous (a
measurable unitary representation of $\bR^d$ on the separable Hilbert space
$L^2(X,\mu)$ is continuous) and is given by
\begin{equation}
\label{eq:koopman-convention}
    U_tF(x):=F((-t).x),
    \qquad t\in\bR^d.
\end{equation}
For $F\in L^2(X,\mu)$, the spectral theorem gives a unique finite positive
measure $\varsigma_F$ on $\bR^d$ such that
\begin{equation}
\label{eq:koopman-spectral-measure}
    \langle U_tF,F\rangle_{L^2(\mu)}
    =\int_{\bR^d}e^{2\pi i\langle t,\xi\rangle}\,d\varsigma_F(\xi),
    \qquad t\in\bR^d.
\end{equation}
We call $\varsigma_F$ the \emph{Koopman spectral measure} of $F$. The least
measure class dominating the spectral measures of all vectors in
$L^2_0(X,\mu)$ is the maximal spectral type of the centered Koopman
representation.

For an abstract probability-preserving $\bZ^d$-action
$T=(T^k)_{k\in\bZ^d}$, we use the convention
$U^kF:=F\circ T^{-k}$. Its spectral measures live on $\bT^d$ and are
characterized by
\begin{equation}
\label{eq:lattice-spectral-measure}
    \langle U^kF,F\rangle
    =\int_{\bT^d}e^{2\pi i\langle k,\theta\rangle}
      \,d\varsigma_F^{\mathrm{lat}}(\theta),
    \qquad k\in\bZ^d.
\end{equation}
If $T$ is the standard-lattice subaction of the $\bR^d$-action above, so that
\[
    T^kx:=k.x,\qquad k\in\bZ^d,
\]
then
\begin{equation}
\label{eq:periodization-scalar-spectral-measure}
    \varsigma_F^{\mathrm{lat}}=p_*\varsigma_F.
\end{equation}

\begin{proposition}[Bartlett--Koopman correspondence]
\label{prop:bartlett-koopman}
Let $\eta$ be stationary with local second moments. For every
$f\in\mathcal S(\bR^d)$,
\begin{equation}
\label{eq:bartlett-koopman}
    d\varsigma_{(\mathbb S f)^\circ}(\xi)
    =|\hat f(\xi)|^2\,d\sigma_\eta(\xi).
\end{equation}
For the standard-lattice subaction $T^k\omega:=k.\omega$ one therefore has
\begin{equation}
\label{eq:bartlett-koopman-lattice}
    \varsigma_{(\mathbb S f)^\circ}^{\mathrm{lat}}
    =p_*\bigl(|\hat f|^2\sigma_\eta\bigr).
\end{equation}
\end{proposition}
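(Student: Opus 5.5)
We will prove \eqref{eq:bartlett-koopman} by computing the correlation function $t\mapsto\langle U_tF,F\rangle$ for $F:=(\mathbb S f)^\circ$ and comparing it with the Fourier transform of $|\hat f|^2\sigma_\eta$. Uniqueness in \eqref{eq:koopman-spectral-measure} (finite measures on $\bR^d$ are determined by their Fourier transforms) then identifies the two measures.

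\textbf{Step 1: translation identity.} By \eqref{eq:koopman-convention} and \eqref{eq:translation-action-random-measures},
\[
    U_tF(\omega)=F((-t).\omega)=\omega\bigl(f(\cdot+t)\bigr)^\circ
    =(\mathbb S f_{-t})^\circ,\qquad f_{-t}(x):=f(x+t).
\]
Centering commutes with translation because $\eta$ is stationary.

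\textbf{Step 2: apply the Bartlett identity.} Since $f_{-t}\in\mathcal S(\bR^d)$, \eqref{eq:bartlett-covariance} gives
\[
    \langle U_tF,F\rangle
    =\int\hat f_{-t}(\xi)\overline{\hat f(\xi)}\,d\sigma_\eta(\xi)
    =\int e^{2\pi i\langle t,\xi\rangle}|\hat f(\xi)|^2\,d\sigma_\eta(\xi),
\]
because $\hat f_{-t}(\xi)=e^{2\pi i\langle t,\xi\rangle}\hat f(\xi)$ under \eqref{eq:fourier-convention}.

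\textbf{Step 3: finiteness and identification.} The measure $|\hat f|^2\sigma_\eta$ is finite: $\sigma_\eta$ is translation bounded (Proposition~\ref{prop:bartlett-spectrum}) and $\hat f$ is rapidly decreasing, so summing over unit cubes gives a convergent series. Both sides of Step 2 are then Fourier transforms of finite positive measures that agree for all $t$, so uniqueness in \eqref{eq:koopman-spectral-measure} yields \eqref{eq:bartlett-koopman}.

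\textbf{Step 4: lattice version.} The lattice statement \eqref{eq:bartlett-koopman-lattice} follows by substituting \eqref{eq:bartlett-koopman} into \eqref{eq:periodization-scalar-spectral-measure}. That identity itself comes from restricting to $t=k\in\bZ^d$, since $e^{2\pi i\langle k,\xi\rangle}$ depends only on $[\xi]$. We must check that $U^k=U_k$ under the conventions $U^kF=F\circ T^{-k}$ and $T^k\omega=k.\omega$. This holds because $F\circ T^{-k}(\omega)=F((-k).\omega)=U_kF(\omega)$.

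\textbf{Main obstacle.} The only delicate point is sign bookkeeping. We must confirm that the conventions $U_tF=F((-t).\,\cdot)$ and $v.\omega=\omega(\cdot-v)$ produce $e^{+2\pi i\langle t,\xi\rangle}$ rather than its conjugate. If the sign came out the other way, the symmetry $\xi\mapsto-\xi$ of $\sigma_\eta$ would not rescue it, since $|\hat f|^2$ need not be even for complex $f$. So the sign has to be verified directly, as in Steps 1 and 2.
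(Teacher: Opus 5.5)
Your proof is correct and follows the paper's argument. You identify $U_t(\mathbb S f)^\circ$ with the centered statistic of $x\mapsto f(x+t)$, substitute $\widehat{f(\cdot+t)}(\xi)=e^{2\pi i\langle t,\xi\rangle}\hat f(\xi)$ into the Bartlett covariance identity, and then periodize to the lattice, additionally making explicit two points the paper leaves implicit: finiteness of $|\hat f|^2\sigma_\eta$ (whose total mass is just $\Var_\eta(\mathbb S f)$) and the check that $U^k=U_k$ under the stated conventions.
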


\begin{proof}
By \eqref{eq:translation-action-random-measures} and
\eqref{eq:koopman-convention}, $U_t(\mathbb S f)^\circ$ is the centered linear
statistic associated with $x\mapsto f(x+t)$. Its Fourier transform is
$e^{2\pi i\langle t,\xi\rangle}\hat f(\xi)$. Substituting this into
\eqref{eq:bartlett-covariance} gives \eqref{eq:bartlett-koopman};
\eqref{eq:bartlett-koopman-lattice} then follows from
\eqref{eq:periodization-scalar-spectral-measure}.
\end{proof}

Let $\mathcal H_\eta\subset L^2_0(\eta)$ be the closed invariant subspace
generated by the centered linear statistics. Choosing a Schwartz function
whose Fourier transform never vanishes and using
\eqref{eq:bartlett-koopman} shows that $\sigma_\eta$ represents the maximal
spectral type of the Koopman representation on $\mathcal H_\eta$. The
subspace $\mathcal H_\eta$ need not equal $L^2_0(\eta)$, so
$\sigma_\eta$ need not represent the maximal spectral type on all of
$L^2_0(\eta)$.

\begin{corollary}
\label{cor:lebesgue-lattice-spectrum-implies-bartlett-ac}
Suppose that, for some $f\in\mathcal S(\bR^d)$ with $\hat f$ nowhere zero,
the lattice spectral measure
$\varsigma_{(\mathbb S f)^\circ}^{\mathrm{lat}}$ is absolutely continuous with respect
to $d\theta$ on $\bT^d$. Then $\sigma_\eta$ is absolutely continuous with
respect to $\lambda_d$.
\end{corollary}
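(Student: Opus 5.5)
The plan is to combine the lattice Bartlett--Koopman identity \eqref{eq:bartlett-koopman-lattice} with the fact that the pushforward $p_*$ cannot hide a singular part. Write the Lebesgue decomposition $\sigma_\eta=\sigma_{ac}+\sigma_s$ with respect to $\lambda_d$, and assume $\sigma_s\neq0$. The goal is to show that $p_*(|\hat f|^2\sigma_\eta)$ then fails to be absolutely continuous on $\bT^d$, contradicting the hypothesis.

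First I would note that $|\hat f|^2\sigma_\eta$ is a finite measure. This holds because $\sigma_\eta$ is translation bounded (Proposition~\ref{prop:bartlett-spectrum}) and $\hat f$ is Schwartz. Since $\hat f$ is nowhere zero, the weight $|\hat f|^2$ is strictly positive and continuous. Hence $|\hat f|^2\sigma_s$ is a nonzero finite measure that is singular with respect to $\lambda_d$. Concretely, there is a Borel set $E\subset\bR^d$ with $\lambda_d(E)=0$ and $(|\hat f|^2\sigma_s)(E)>0$.

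Next, set $E':=p(E)\subset\bT^d$. I would check that $E'$ is Borel and $d\theta$-null. Writing $E=\bigcup_{m\in\bZ^d}E\cap(m+[-1/2,1/2)^d)$, each image $p(E\cap(m+[-1/2,1/2)^d))$ is a translate of a Borel subset of the fundamental domain, since $p$ restricted to that cube is a Borel isomorphism onto $\bT^d$. It is therefore Borel with measure zero. So $E'$ is a countable union of null Borel sets. On the other hand, $E\subset p^{-1}(E')$, and $p_*$ of a positive measure is monotone under such containments. This gives
\[
  p_*\bigl(|\hat f|^2\sigma_\eta\bigr)(E')\geq (|\hat f|^2\sigma_\eta)(E)\geq(|\hat f|^2\sigma_s)(E)>0 .
\]
By \eqref{eq:bartlett-koopman-lattice}, the left-hand side equals $\varsigma^{\mathrm{lat}}_{(\mathbb S f)^\circ}(E')$. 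That measure is absolutely continuous by hypothesis and $E'$ is null, so it must vanish, which is a contradiction. Therefore $\sigma_s=0$.

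The only point needing care is this last measurability and nullness step for $p(E)$. The countable-union argument over unit cubes handles it, and positivity of the measures is what makes the monotonicity inequality valid. No further obstacle is expected.
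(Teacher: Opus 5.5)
Your proof is correct and is essentially the paper's argument: the paper restricts $|\hat f|^2\sigma_\eta$ to each translate of the fundamental cube, where $p$ is a translation, so each restriction is dominated by the absolutely continuous measure $p_*(|\hat f|^2\sigma_\eta)$, and then sums over cubes. Your null-set argument via $E\subset p^{-1}(p(E))$ is the same reasoning phrased as a contradiction, and you use $|\hat f|^2>0$ in the same way to pass back to $\sigma_\eta$.
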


\begin{proof}
By \eqref{eq:bartlett-koopman-lattice}, the positive measure
$p_*(|\hat f|^2\sigma_\eta)$ is absolutely continuous on $\bT^d$. Restrict
$|\hat f|^2\sigma_\eta$ to each translate of the fundamental cube
$[-1/2,1/2)^d$. On such a cube the quotient map $p$ is a translation modulo
its boundary, so the restriction is absolutely continuous with respect to
Lebesgue measure. Summing over the countably many translates gives
$|\hat f|^2\sigma_\eta\ll\lambda_d$. Since $\hat f$ never vanishes,
$\sigma_\eta\ll\lambda_d$.
\end{proof}

The terms \emph{ergodic} and \emph{essentially free} have their usual
measure-theoretic meanings. For a stationary random measure they refer to the
translation action \eqref{eq:translation-action-random-measures}.

\subsection{Entropy and completely positive entropy}
\label{subsec:entropy-cpe}

Let
$T=(T^k)_{k\in\bZ^d}$ be a probability-preserving $\bZ^d$-action on a
standard probability space $(X,\mathcal F,\mu)$. For a finite measurable
partition $\alpha=\{A_1,\ldots,A_m\}$, its Shannon entropy is
\begin{equation}
\label{eq:partition-entropy}
    H_\mu(\alpha):=-\sum_{j=1}^m\mu(A_j)\log\mu(A_j),
\end{equation}
with $0\log0:=0$. If $Q_n:=\{0,\ldots,n-1\}^d$, the entropy of $\alpha$ under
$T$ is
\begin{equation}
\label{eq:Zd-partition-entropy}
    h_\mu(T,\alpha)
    :=\lim_{n\to\infty}\frac{1}{|Q_n|}
      H_\mu\!\left(\bigvee_{k\in Q_n}T^{-k}\alpha\right).
\end{equation}
The limit exists, and the entropy of the action is
$h_\mu(T):=\sup_\alpha h_\mu(T,\alpha)$, where the supremum is over finite
measurable partitions.

A factor of $T$ may be represented by a $T$-invariant sub-$\sigma$-algebra
$\mathcal G\subset\mathcal F$. Its entropy is the supremum of
$h_\mu(T,\alpha)$ over finite $\mathcal G$-measurable partitions. There is a
largest invariant sub-$\sigma$-algebra of zero entropy, the \emph{Pinsker
$\sigma$-algebra} $\Pi(T)$. The action has \emph{completely positive entropy}
(CPE) if $\Pi(T)$ is trivial modulo null sets. Equivalently, every nontrivial
factor has positive entropy.

Every factor of a CPE action is CPE, and every CPE action is ergodic.
Bernoulli actions are CPE, but the converse fails already for $\bZ$-actions:
the classical $T,T^{-1}$ transformation is a K-automorphism \cite{Mei74} but
is not loosely Bernoulli, and hence in particular is not Bernoulli
\cite{Kal82}.

For probability-preserving $\bR^d$-actions we use the Ornstein--Weiss
spatial entropy, with Lebesgue measure as Haar measure, in the convention of
Dooley and Golodets \cite{DG12}. If $\phi$ is such an action and $\alpha$ is
a finite measurable partition, write $\operatorname{sh}(\phi,\alpha)$ for
its spatial entropy. We say that $\phi$ has \emph{completely positive
entropy} if
\[
    \operatorname{sh}(\phi,\alpha)>0
\]
for every nontrivial finite measurable partition $\alpha$, where
nontriviality is understood modulo null sets. This is the definition used in
\cite[Definition~3.1]{DG12}. The finite-partition formulation immediately
passes to factors and implies ergodicity.

For the standard Borel actions considered here, essential freeness allows us
to discard an invariant null set and work on a free invariant Borel set,
without changing the measure-preserving system or its entropy. Since $\bR^d$ belongs to the class $\mathcal{ULG}$ of
\cite[Definition~1.2]{DG12}, \cite[Theorem~1.3; see also Theorem~3.2]{DG12}
specializes as follows.

\begin{proposition}[Dooley--Golodets]
\label{prop:dooley-golodets-lattice-cpe}
Let $(t,x)\mapsto t.x$ be a free ergodic probability-preserving
$\bR^d$-action, and let $\Gamma<\bR^d$ be a full-rank lattice. Then the
$\bR^d$-action has completely positive entropy if and only if its
$\Gamma$-subaction has completely positive entropy.
\end{proposition}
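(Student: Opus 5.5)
This proposition is a direct specialization of \cite[Theorem~1.3]{DG12}, so the plan is to check that the hypotheses match and that the entropy notions on both sides agree with those fixed in Section~\ref{subsec:entropy-cpe}. First, I would note that $\bR^d$ is a connected, unimodular, amenable Lie group lying in the class $\mathcal{ULG}$ of \cite[Definition~1.2]{DG12}, and that any full-rank lattice $\Gamma<\bR^d$ is a uniform lattice. For such pairs, Dooley and Golodets prove that a free ergodic action of the ambient group has CPE, in the spatial-entropy sense of \cite[Definition~3.1]{DG12}, exactly when its restriction to the lattice has CPE. The definition of CPE for $\bR^d$-actions was adopted verbatim from \cite[Definition~3.1]{DG12}, so that side needs no translation.

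Second, I would reconcile the lattice side. Choose a linear isomorphism $A\in GL_d(\bR)$ with $A\bZ^d=\Gamma$. The $\Gamma$-subaction then becomes a $\bZ^d$-action $k\mapsto (Ak).x$. The Dooley--Golodets notion of entropy for a discrete amenable group action agrees with the Kolmogorov--Sinai entropy \eqref{eq:Zd-partition-entropy} computed along the F\o lner cubes $Q_n$. Ornstein--Weiss entropy is independent of the choice of F\o lner sequence, and the Pinsker algebra is defined intrinsically from the zero-entropy factors. Hence triviality of the Pinsker algebra, which is the CPE condition of Section~\ref{subsec:entropy-cpe}, is equivalent to positivity of entropy for every nontrivial finite partition, the condition used in \cite{DG12}. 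This equivalence is the standard fact that every nontrivial factor contains a nontrivial finite partition, with the factor generated by that partition having entropy at most that of the full factor.

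Third, I would address freeness. The proposition assumes a genuinely free action. The preceding paragraph explains that essential freeness lets one pass to a free invariant conull Borel set without changing the system or its entropy, so the free hypothesis of \cite{DG12} is satisfied. Ergodicity is assumed outright.

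The main obstacle I expect is purely bookkeeping: confirming that the normalization of Haar measure and the orientation of the lattice do not affect the statement. Rescaling Haar measure multiplies spatial entropy by a positive constant, which preserves positivity. A change of lattice basis yields an isomorphic $\bZ^d$-action, which preserves CPE. With these checks in place, the equivalence follows from \cite[Theorem~1.3, Theorem~3.2]{DG12}.
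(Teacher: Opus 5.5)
Your proposal is correct and matches the paper, which gives no separate argument and simply specializes \cite[Theorem~1.3]{DG12} after observing that $\bR^d$ lies in $\mathcal{ULG}$. Your extra bookkeeping reconciling the two CPE conventions is sound, but for the half ``Pinsker algebra trivial $\Rightarrow$ every nontrivial finite partition has positive entropy'' (needed for the ``if'' direction of the equivalence) you should also cite the standard fact that a zero-entropy partition generates a zero-entropy factor, since your stated justification covers only the opposite half.
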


Only the implication from CPE of the $\bR^d$-action to CPE of its lattice
subaction is used below. For $d=1$, apart from the trivial action, CPE implies
essential freeness: a nonzero period makes an ergodic flow periodic and hence
zero-entropy. For $d\geq2$ we retain essential freeness as a separate
hypothesis because Proposition~\ref{prop:dooley-golodets-lattice-cpe} assumes
freeness; its possible redundancy is not needed here.

For the standard lattice we write $T^kx:=k.x$.
Section~\ref{sec:perfect-past} applies the Rohlin--Sinai--Kami\'nski theorem
on perfect $\sigma$-algebras, whose remote tail is the Pinsker algebra; CPE is
used there only to make that tail trivial. No quantitative entropy estimate is
used.

\subsection{Complex Gaussian laws and convergence in distribution}
\label{subsec:complex-gaussian}

A complex random vector $Z=(Z_1,\ldots,Z_m)$ is called centered Gaussian if
the real vector
\[
    (\Re Z_1,\Im Z_1,\ldots,\Re Z_m,\Im Z_m)
\]
is centered Gaussian. Its covariance matrix is
\begin{equation}
\label{eq:complex-covariance}
    C_{ab}:=\E[Z_a\overline{Z_b}].
\end{equation}
We call $Z$ \emph{proper} if
\begin{equation}
\label{eq:complex-properness}
    \E[Z_aZ_b]=0,
    \qquad 1\leq a,b\leq m.
\end{equation}
For a centered complex Gaussian vector this is equivalent to circular
symmetry. We write $Z\sim\mathcal{CN}_m(0,C)$ for the proper centered complex
Gaussian law with covariance $C$.

If $X_R$ and $X$ are Borel random elements of a separable metric space $S$,
then
\[
    X_R\dto X \quad\text{in }S
\]
means convergence in distribution, that is, weak convergence of the Borel
probability laws on $S$. Equivalently,
$\E f(X_R)\to\E f(X)$ for every bounded continuous $f:S\to\bR$.

In the scalar case, $Z\sim\mathcal{CN}(0,\sigma^2)$ means that
$\E|Z|^2=\sigma^2$. Because $\mathcal{CN}$ denotes the proper Gaussian law,
this is equivalent to saying that the real and imaginary parts are independent
$N(0,\sigma^2/2)$ variables. Consequently,
\begin{equation}
\label{eq:complex-gaussian-exponential}
    Z\sim\mathcal{CN}(0,\sigma^2)
    \quad\Longrightarrow\quad
    |Z|^2\sim\sigma^2\operatorname{Exp}(1).
\end{equation}
For $a,\vartheta>0$, we write $\Gamma(a,\vartheta)$ for the gamma distribution
with shape $a$ and scale $\vartheta$, whose density on $(0,\infty)$ is
\begin{equation}
\label{eq:gamma-shape-scale-density}
    x\longmapsto
    \frac{x^{a-1}e^{-x/\vartheta}}
         {\Gamma(a)\vartheta^a}.
\end{equation}
Here $\Gamma(a)$ in the denominator is Euler's gamma function. Its mean is $a\vartheta$ and its variance is $a\vartheta^2$; moreover, if
$X\sim\Gamma(a,1)$, then $\vartheta X\sim\Gamma(a,\vartheta)$. Thus
$\operatorname{Exp}(1)=\Gamma(1,1)$. If $\sigma^2>0$ and $Z_1,\ldots,Z_q$ are independent
$\mathcal{CN}(0,\sigma^2)$ variables, then
\begin{equation}
\label{eq:complex-gaussian-gamma}
    \frac1{\sigma^2}\sum_{j=1}^q|Z_j|^2\sim\Gamma(q,1).
\end{equation}
 \section{Completely positive entropy and perfect \texorpdfstring{$\sigma$}{sigma}-algebras}
\label{sec:perfect-past}

We extract from CPE the two structural facts used later. For the standard-lattice subaction
$T=(T^k)_{k\in\bZ^d}$, with $T^kx:=k.x$, we prove
\begin{equation}
\label{eq:section3-goal-decomposition}
    L^2_0(X,\mu)=\bigoplus_{k\in\bZ^d}U^kK_0,
\end{equation}
where the summands are martingale-difference spaces for the lexicographic
past. We then deduce that every centered Koopman spectral measure of $T$ is
absolutely continuous with respect to Lebesgue measure on $\bT^d$. The first
statement is obtained from the Rohlin--Sinai perfect-past theorem when $d=1$
and Kami\'nski's multidimensional extension when $d\geq2$.

\subsection{Perfect \texorpdfstring{$\sigma$}{sigma}-algebras and lexicographic cuts}
\label{subsec:perfect-lexicographic}

Equip $\bZ^d$ with the lexicographic order. For distinct
$k,\ell\in\bZ^d$, one has $k<_{\rm lex}\ell$ when $k_j<\ell_j$ at the first
coordinate where they differ. This is a translation-invariant total order:
every two distinct elements are comparable, and
\[
    k<_{\rm lex}\ell
    \quad\Longrightarrow\quad
    k+m<_{\rm lex}\ell+m
    \qquad(m\in\bZ^d).
\]
Its discrete feature that will be used repeatedly is that $k-e_d$ is the
immediate predecessor of $k$.

Following Kami\'nski, a \emph{cut} is an ordered pair $(L,R)$ of nonempty
subsets of $\bZ^d$ such that
\[
    \bZ^d=L\sqcup R,
    \qquad
    \ell<_{\rm lex}r
    \quad\text{for every }\ell\in L,\ r\in R.
\]
Thus $L$ and $R$ are the left and right sides of the cut. The cut is
\emph{principal} if $R$ has a least element. Equivalently, $L$ has a greatest
element; if the least element of $R$ is $k$, then the greatest element of $L$
is $k-e_d$. A cut is \emph{nonprincipal} if neither endpoint exists. In
Kami\'nski's terminology these nonprincipal cuts are the \emph{gaps}. They
occur only when $d\geq2$.

For example, in $\bZ^2$ the cut
\[
    L=\{\ell:\ell<_{\rm lex}0\},
    \qquad
    R=\{\ell:\ell\geq_{\rm lex}0\}
\]
is principal, with endpoints $-e_2$ and $0$. By contrast,
\[
    L=\{(m,n):m<0\},
    \qquad
    R=\{(m,n):m\geq0\}
\]
is nonprincipal: the left side has no greatest element and the right side has
no least element. For $d=1$ every cut is principal.

For a sub-$\sigma$-algebra $\mathcal A\subset\mathcal F$, write
$T^k\mathcal A$ for its image under $T^k$. With the Koopman convention
$U^kF=F\circ T^{-k}$ from Section~\ref{subsec:translation-spectra},
\begin{equation}
\label{eq:sigma-algebra-koopman-convention}
    U^kL^2(\mathcal A)=L^2(T^k\mathcal A).
\end{equation}
All $\sigma$-algebras are understood modulo null sets.

For $d=1$ the required statement is the Rohlin--Sinai perfect-$\sigma$-algebra
theorem \cite{RS61}. For $d\geq2$, Kami\'nski developed the corresponding
invariant-partition theory in \cite{Kam81}; we use the $\sigma$-algebra
formulation of \cite[Theorem~B, pp.~263--264]{Kam91}, where continuity at lexicographic
gaps is explicit. A later survey \cite{Kam96} describes perfect and strongly
invariant $\sigma$-algebras and their applications to Kolmogorov
$\bZ^d$-actions and spectral theory. The entropy identities in these results
are not used, and the existence statement from \cite[Theorem~B]{Kam91} does
not require a finite-entropy hypothesis.

\begin{theorem}[Rohlin--Sinai--Kami\'nski]
\label{thm:kaminski-perfect-past}
Let $T$ be a probability-preserving $\bZ^d$-action on a standard probability
space, and let $\Pi(T)$ denote its Pinsker $\sigma$-algebra. There is a
sub-$\sigma$-algebra $\mathcal A_0\subset\mathcal F$ such that, with
$\mathcal A_k:=T^k\mathcal A_0$,
\begin{align}
\label{eq:perfect-monotonicity}
    k<_{\rm lex}\ell &\quad\Longrightarrow\quad
      \mathcal A_k\subset\mathcal A_\ell,\\
\label{eq:perfect-exhaustion}
    \bigvee_{k\in\bZ^d}\mathcal A_k&=\mathcal F,\\
\label{eq:perfect-tail}
    \bigcap_{k\in\bZ^d}\mathcal A_k&=\Pi(T).
\end{align}
For every nonprincipal cut $(L,R)$,
\begin{equation}
\label{eq:perfect-gap-continuity}
    \bigvee_{k\in L}\mathcal A_k
    =\bigcap_{k\in R}\mathcal A_k.
\end{equation}
For $d=1$ the last condition is vacuous.
\end{theorem}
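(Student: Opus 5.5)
The plan is to deduce the theorem from the literature rather than reprove it: the content is the Rohlin--Sinai theorem for $d=1$ and Kami\'nski's Theorem~B for $d\geq2$. The work consists of matching conventions (direction of the action, direction of monotonicity, modulo-null-set equalities), with essentially no new mathematics.

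For $d=1$, I would invoke \cite{RS61}. It gives a sub-$\sigma$-algebra $\mathcal A$ with three properties. First, $\mathcal A\subset T\mathcal A$, or $T^{-1}\mathcal A\subset\mathcal A$ in the other common convention. Second, $\bigvee_n T^n\mathcal A=\mathcal F$. Third, $\bigcap_n T^n\mathcal A=\Pi(T)$. With $\mathcal A_0:=\mathcal A$ and $\mathcal A_k:=T^k\mathcal A_0$, the inclusion $T^{-1}\mathcal A\subset\mathcal A$ becomes $\mathcal A_{k-1}\subset\mathcal A_k$. Iterating gives \eqref{eq:perfect-monotonicity}, and the other two properties are \eqref{eq:perfect-exhaustion} and \eqref{eq:perfect-tail} verbatim. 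If the source uses the opposite orientation, I would replace $\mathcal A$ by $\mathcal A$ for the inverse action, equivalently reverse the indexing $k\mapsto -k$. This is harmless because $\Pi(T)=\Pi(T^{-1})$. Condition \eqref{eq:perfect-gap-continuity} is vacuous, since every cut of $\bZ$ is principal.

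For $d\geq2$, I would quote \cite[Theorem~B]{Kam91}. It provides a sub-$\sigma$-algebra $\mathcal A_0$ that is invariant in the lexicographic sense, $T^{k}\mathcal A_0\subset\mathcal A_0$ for $k<_{\rm lex}0$ up to orientation. By translation invariance of $<_{\rm lex}$, this inclusion is equivalent to \eqref{eq:perfect-monotonicity}: if $k<_{\rm lex}\ell$ then $k-\ell<_{\rm lex}0$, and applying $T^\ell$ gives $\mathcal A_k\subset\mathcal A_\ell$. Theorem~B also states exhaustiveness, identifies the total remote past with the Pinsker algebra, and includes the "perfectness at gaps." I would restate the latter as \eqref{eq:perfect-gap-continuity} for nonprincipal cuts.

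Two points need checking against the source.

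1. The paper's Koopman convention $U^kF=F\circ T^{-k}$ and the definition $\mathcal A_k=T^k\mathcal A_0$ must produce an increasing rather than decreasing family. If not, I would flip the order via $k\mapsto -k$, which maps lexicographic cuts to lexicographic cuts and nonprincipal cuts to nonprincipal ones. This preserves every condition.

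2. No finite-entropy hypothesis must be needed. Here I would rely on the existence part of Theorem~B, which the paper notes holds without such a hypothesis.

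I expect the main obstacle to be the gap condition \eqref{eq:perfect-gap-continuity}. Kami\'nski phrases it through his own notion of gaps and continuity of the filtration $\{\mathcal A_k\}$ along the order. One has to confirm that his formulation is exactly equality of the join over the left side with the intersection over the right side, and not merely one inclusion. The inclusion $\bigvee_{L}\mathcal A_k\subset\bigcap_R\mathcal A_k$ is automatic from monotonicity. So the substantive content to cite is the reverse inclusion, and I would verify explicitly that \cite[pp.~263--264]{Kam91} asserts it for all nonprincipal cuts.
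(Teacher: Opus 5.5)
Your proposal is correct and follows the paper's argument. The paper cites Rohlin--Sinai for $d=1$ and Kami\'nski's Theorem~B for $d\geq2$, and it derives \eqref{eq:perfect-monotonicity} from condition $(a_1)$, $T^g\mathcal A_0\subset\mathcal A_0$ for $g<_{\rm lex}0$, by translation exactly as you do. It reads exhaustion, the Pinsker tail, and gap continuity off $(b_1)$, $(c_1)$, $(e_1)$, and it uses no finite-entropy hypothesis. Since $(a_1)$ already has the paper's orientation, your contingent flip is not needed. Had it been needed, it would have to be done by applying the cited theorem to the inverse action $k\mapsto T^{-k}$, not by relabeling the family, because the statement fixes $\mathcal A_k=T^k\mathcal A_0$.
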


For $d\geq2$, condition $(a_1)$ of \cite[Theorem~B, pp.~263--264]{Kam91} states
$T^g\mathcal A_0\subset\mathcal A_0$ for $g<_{\rm lex}0$; translating this
inclusion gives \eqref{eq:perfect-monotonicity}. Conditions $(b_1)$ and
$(c_1)$ give \eqref{eq:perfect-exhaustion} and \eqref{eq:perfect-tail},
respectively, while condition $(e_1)$, imposed for every gap, gives
\eqref{eq:perfect-gap-continuity}. Such an $\mathcal A_0$ is called a
\emph{perfect $\sigma$-algebra}.

Under CPE, $\Pi(T)$ is trivial. By
Proposition~\ref{prop:dooley-golodets-lattice-cpe}, this applies to the
standard-lattice subaction of every essentially free CPE $\bR^d$-action.

\subsection{The wandering decomposition}
\label{subsec:wandering-decomposition}

Put $\mathscr H_k:=L^2(\mathcal A_k)$. Since $k-e_d$ is the immediate
predecessor of $k$, monotonicity gives
\begin{equation}
\label{eq:strict-lex-past}
    \bigvee_{\ell<_{\rm lex}k}\mathcal A_\ell
    =\mathcal A_{k-e_d}.
\end{equation}
Thus $\mathcal A_{-e_d}$ is the entire strict lexicographic past of the
origin. Define
\begin{equation}
\label{eq:innovation-space}
    K_0:=L^2(\mathcal A_0)\ominus L^2(\mathcal A_{-e_d}).
\end{equation}
It is the part of $L^2(\mathcal A_0)$ orthogonal to the strict past. By
\eqref{eq:sigma-algebra-koopman-convention},
\begin{equation}
\label{eq:innovation-translates}
    U^kK_0
    =L^2(\mathcal A_k)\ominus L^2(\mathcal A_{k-e_d}).
\end{equation}
For a nonprincipal cut $(L,R)$, equation
\eqref{eq:perfect-gap-continuity} gives
\[
    \overline{\bigcup_{k\in L}L^2(\mathcal A_k)}
    =\bigcap_{k\in R}L^2(\mathcal A_k).
\]
Thus the only successive orthogonal differences are those in
\eqref{eq:innovation-translates}. Lemma~\ref{lem:lexicographic-jump-decomposition}
in Appendix~\ref{app:lexicographic} gives the resulting orthogonal decomposition
used in Proposition~\ref{prop:wandering-innovation-decomposition}.

\begin{proposition}[Wandering decomposition]
\label{prop:wandering-innovation-decomposition}
Let $T$ be a probability-preserving $\bZ^d$-action and let
$\mathcal A_0$ be given by Theorem~\ref{thm:kaminski-perfect-past}. Then
\begin{equation}
\label{eq:cpe-wandering-decomposition}
    L^2(X,\mu)\ominus L^2(\Pi(T))
    =\bigoplus_{k\in\bZ^d}U^kK_0.
\end{equation}
In particular, if $T$ has CPE, then
\begin{equation}
\label{eq:cpe-wandering-decomposition-centered}
    L^2_0(X,\mu)=\bigoplus_{k\in\bZ^d}U^kK_0.
\end{equation}
\end{proposition}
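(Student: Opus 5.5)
First I would check that the summands are mutually orthogonal. Take distinct $k,\ell\in\bZ^d$ and assume $k<_{\rm lex}\ell$. Then $k\leq_{\rm lex}\ell-e_d$, so monotonicity \eqref{eq:perfect-monotonicity} gives $\mathcal A_k\subset\mathcal A_{\ell-e_d}$. By \eqref{eq:innovation-translates}, $U^kK_0\subset L^2(\mathcal A_k)\subset L^2(\mathcal A_{\ell-e_d})$, while $U^\ell K_0\perp L^2(\mathcal A_{\ell-e_d})$. Hence $U^kK_0\perp U^\ell K_0$. The tail identity \eqref{eq:perfect-tail} also gives $L^2(\Pi(T))=\bigcap_k\mathscr H_k$. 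Each $U^kK_0$ is orthogonal to $\mathscr H_{k-e_d}$, which contains this intersection, so every summand lies in $L^2(X,\mu)\ominus L^2(\Pi(T))$. This proves the inclusion ``$\supseteq$''.

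For the reverse inclusion, I would show that the closed span $W:=\overline{\bigoplus_kU^kK_0}$ contains everything orthogonal to $L^2(\Pi(T))$. The plan is to study the family of projections $P_k:=P_{\mathscr H_k}$, which is monotone along the totally ordered set $(\bZ^d,<_{\rm lex})$. By exhaustion \eqref{eq:perfect-exhaustion}, $\overline{\bigcup_k\mathscr H_k}=L^2(X,\mu)$, so $P_k\to I$ strongly along the order. By \eqref{eq:perfect-tail}, $P_k\to P_{L^2(\Pi(T))}$ strongly in the other direction. Take $F\perp L^2(\Pi(T))$ with $F\perp W$. Then $P_kF-P_{k-e_d}F\in U^kK_0$, and it is orthogonal to $F$. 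Because it is the projection of $F$ onto $U^kK_0$, this forces $P_kF=P_{k-e_d}F$ for every $k$. So the function $k\mapsto P_kF$ is constant across every principal cut.

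The main obstacle is propagating this constancy across the whole lexicographic order when $d\geq2$, where gaps occur. This is exactly what Lemma~\ref{lem:lexicographic-jump-decomposition} in the appendix should encode; I would prove it by induction on $d$ along the lexicographic coordinates. Define the nonempty set $G:=\{k: P_kF=F\}$; it is nonempty because $P_k F\to F$, and I want to show $G=\bZ^d$. Note that $G$ is upward closed, since $\mathscr H_k$ increases. Its complement $L:=\bZ^d\setminus G$, if nonempty, gives a cut $(L,G)$. If the cut is principal with least element $k$ of $G$, then $P_{k-e_d}F=P_kF=F$, which contradicts $k-e_d\in L$. If the cut is nonprincipal, gap continuity \eqref{eq:perfect-gap-continuity} gives $\overline{\bigcup_{L}\mathscr H_\ell}=\bigcap_G\mathscr H_k\ni F$. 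So $P_\ell F\to F$ along $L$ upward. To extract a contradiction I would instead apply the same argument to the set of $k$ with $\|P_kF\|\geq\|F\|-\varepsilon$, or more cleanly to the nondecreasing function $\phi(k):=\|P_kF\|^2$. This function is constant across principal cuts by the previous paragraph, and continuous across gaps by gap continuity together with monotone convergence of projections. A monotone function on $(\bZ^d,<_{\rm lex})$ with no jumps at any cut must be constant, as a Dedekind-type argument over cuts shows. Hence $\phi\equiv\lim_{\to-\infty}\phi=\|P_{L^2(\Pi)}F\|^2=0$ and also $\phi\equiv\|F\|^2$, so $F=0$.

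The CPE statement then follows immediately. Triviality of $\Pi(T)$ makes $L^2(\Pi(T))$ the constants, whose orthogonal complement is $L^2_0(X,\mu)$; this gives \eqref{eq:cpe-wandering-decomposition-centered}.
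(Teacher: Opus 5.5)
Your argument is correct, but it is organized differently from the paper's.

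\textbf{The paper's route.} The paper applies the abstract Lemma~\ref{lem:lexicographic-jump-decomposition} to $\mathscr H_k=L^2(\mathcal A_k)$ and proves that lemma by induction on $d$. It groups $\bZ^d$ into blocks $\{m\}\times\bZ^{d-1}$. Continuity at the gap between consecutive blocks gives $\mathscr H_{m-1}^+=\mathscr H_m^-$. The $(d-1)$-dimensional case is then applied inside each block, using that cuts of a block extend to cuts of $\bZ^d$. The proof finishes with the one-dimensional decomposition of the bilateral sequence $(\mathscr H_m^+)_m$.

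\textbf{Your route.} You check orthogonality of the summands directly. You then prove completeness by testing against an $F$ orthogonal to all of them. This reduces everything to a scalar fact: a nondecreasing $\phi$ on $(\bZ^d,<_{\rm lex})$ with $\sup_L\phi=\inf_R\phi$ at every cut is constant.

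\textbf{What each buys.} Your reduction separates the Hilbert-space input from the order-theoretic input. The Hilbert-space input is that $P_k-P_{k-e_d}$ projects onto $U^kK_0$, together with strong limits of monotone projection nets. The reduction also makes clear that only the cut structure of the index order matters. The price is that the scalar fact, which carries the entire $d\geq2$ content, is only asserted as following from ``a Dedekind-type argument.''

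\textbf{The missing step.} The scalar fact needs an argument specific to $\bZ^d$. It fails for dense orders: $\phi(q)=q$ on $\mathbb Q$ has no jump at any cut. Here is a short proof for $\bZ^d$.
\begin{itemize}
\item Every cut of $\bZ^d$ is either principal or of the form $L=\{k:(k_1,\dots,k_j)<_{\rm lex}(m_1,\dots,m_j)\}$ with $1\leq j\leq d-1$. So there are only countably many cuts.
\item Suppose $\phi(a)<\phi(b)$. For each $c\in(\phi(a),\phi(b))$, the set $L_c=\{\phi<c\}$ is the left side of a cut.
\item The no-jump property forces $\sup_{L_c}\phi=\inf_{R_c}\phi=c$. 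Hence $c\mapsto L_c$ injects an interval into a countable set, a contradiction.
\end{itemize}
Alternatively, you can run the paper's block induction at the level of $\phi$.

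\textbf{A minor slip.} Your abandoned first attempt infers that $\{k:P_kF=F\}$ is nonempty from $P_kF\to F$. Strong convergence gives no such eventual equality. Since you switch to $\phi$, this does not affect the final proof.
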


\begin{proof}
For an increasing countable family of $\sigma$-algebras,
\[
    L^2\!\left(\bigvee_i\mathcal G_i\right)
    =\overline{\bigcup_iL^2(\mathcal G_i)},
\]
and for any countable family,
\[
    L^2\!\left(\bigcap_i\mathcal G_i\right)
    =\bigcap_iL^2(\mathcal G_i).
\]
Apply Lemma~\ref{lem:lexicographic-jump-decomposition} to
$\mathscr H_k=L^2(\mathcal A_k)$. Equations
\eqref{eq:perfect-exhaustion} and \eqref{eq:perfect-tail} identify the two
endpoint spaces, \eqref{eq:perfect-gap-continuity} supplies continuity at every
nonprincipal cut, and \eqref{eq:innovation-translates} identifies the
successive differences. If $T$ has CPE, then $L^2(\Pi(T))$ consists of the
constants.
\end{proof}

Let now $H$ be a separable complex Hilbert space. The standard isometric
identification
\[
    L^2(X,\mu;H)=L^2(X,\mu)\widehat\otimes H
\]
for the Bochner $L^2$-space is given, for example, in
\cite[Theorem~12.6.1]{Aub00}. Under this identification the constant
$H$-valued functions are $\bC\mathbf 1\widehat\otimes H$. Hence their
orthogonal complement is
\[
    L^2_0(X,\mu;H)
    :=\left\{Y\in L^2(X,\mu;H):\int_XY\,d\mu=0\right\}
    =L^2_0(X,\mu)\widehat\otimes H.
\]
The Koopman action on this space is
\[
    (U^kY)(x):=Y(T^{-k}x),
\]
which corresponds to $U^k\otimes I_H$ under the tensor-product
identification. Hence, for a CPE action, tensoring
\eqref{eq:cpe-wandering-decomposition-centered} with $H$ gives
\begin{equation}
\label{eq:hilbert-valued-wandering-decomposition}
    L^2_0(X,\mu;H)
    =\bigoplus_{k\in\bZ^d}U^k(K_0\widehat\otimes H).
\end{equation}

\subsection{Martingale differences and Lebesgue spectrum}
\label{subsec:martingale-spectral-consequences}

For a sub-$\sigma$-algebra $\mathcal G\subset\mathcal F$, the conditional
expectation of $Y\in L^2(X,\mu;H)$ is the orthogonal projection onto
$L^2(X,\mathcal G,\mu;H)$. Equivalently, for every $v\in H$,
\[
    \left\langle \E[Y\mid\mathcal G],v\right\rangle_H
    =\E\!\left[\langle Y,v\rangle_H\mid\mathcal G\right].
\]
If $D\in K_0\widehat\otimes H$, then
$D\perp L^2(\mathcal A_{-e_d};H)$. Using
\eqref{eq:strict-lex-past} and translating this orthogonality gives
\begin{equation}
\label{eq:lex-martingale-difference}
    \E\!\left[U^kD\mid
      \bigvee_{\ell<_{\rm lex}k}\mathcal A_\ell\right]=0.
\end{equation}
Thus the spaces in \eqref{eq:hilbert-valued-wandering-decomposition} are
martingale-difference spaces for the lexicographic filtration.

For the scalar decomposition, define
\[
    \mathcal W:\ell^2(\bZ^d;K_0)\longrightarrow L^2_0(X,\mu),
    \qquad
    \mathcal W((D_k)):=\sum_{k\in\bZ^d}U^kD_k.
\]
The series converges in $L^2$, and orthogonality gives
\[
    \|\mathcal W((D_k))\|_2^2
    =\sum_{k\in\bZ^d}\|D_k\|_2^2.
\]
By \eqref{eq:cpe-wandering-decomposition-centered}, $\mathcal W$ is unitary.
If $\lambda(j)$ denotes the regular shift
\[
    (\lambda(j)D)_k:=D_{k-j},
\]
then
\begin{equation}
\label{eq:regular-representation-unitary}
    U^j\mathcal W=\mathcal W\lambda(j),
    \qquad j\in\bZ^d.
\end{equation}
The Fourier transform in the lattice variable identifies $\lambda(j)$ with
multiplication by the character $\theta\mapsto
 e^{2\pi i\langle j,\theta\rangle}$ on $L^2(\bT^d;K_0)$. Therefore the
centered Koopman representation is a multiple of the regular representation,
and its spectral measures are absolutely continuous with respect to normalized
Lebesgue measure $d\theta$ on $\bT^d$.

\begin{corollary}[Lebesgue lattice spectrum]
\label{cor:cpe-lebesgue-lattice-spectrum}
For a CPE $\bZ^d$-action, every centered Koopman spectral measure is
absolutely continuous with respect to $d\theta$ on $\bT^d$.
\end{corollary}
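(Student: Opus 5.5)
The plan is to convert the wandering decomposition \eqref{eq:cpe-wandering-decomposition-centered} into an explicit spectral computation. We use the unitary $\mathcal W:\ell^2(\bZ^d;K_0)\to L^2_0(X,\mu)$ and its intertwining relation \eqref{eq:regular-representation-unitary}.

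Fix $F\in L^2_0(X,\mu)$ and write $F=\mathcal W((D_k))=\sum_k U^kD_k$ with $(D_k)\in\ell^2(\bZ^d;K_0)$. The first step is to compute the lattice correlations. Using \eqref{eq:regular-representation-unitary} and unitarity of $\mathcal W$,
\[
    \langle U^jF,F\rangle
    =\langle\lambda(j)(D_k),(D_k)\rangle_{\ell^2(\bZ^d;K_0)}
    =\sum_{k\in\bZ^d}\langle D_{k-j},D_k\rangle_{K_0}.
\]

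The second step is to pass to Fourier series. Define $\widehat D(\theta):=\sum_k D_k e^{-2\pi i\langle k,\theta\rangle}$. This series converges in $L^2(\bT^d;K_0)$, and by the vector-valued Parseval identity on $\ell^2(\bZ^d;K_0)$, which holds because $\{e^{2\pi i\langle k,\theta\rangle}\}$ is an orthonormal basis of $L^2(\bT^d)$ tensored with $K_0$, we get $\|\widehat D\|_{L^2(\bT^d;K_0)}^2=\sum_k\|D_k\|^2$. The shift $\lambda(j)$ becomes multiplication by $e^{-2\pi i\langle j,\theta\rangle}$; the sign depends only on the Fourier convention and is immaterial. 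Polarized Parseval then gives
\[
    \langle U^jF,F\rangle=\int_{\bT^d}e^{\pm 2\pi i\langle j,\theta\rangle}\,\|\widehat D(\theta)\|_{K_0}^2\,d\theta .
\]
If the sign is negative, replace $\theta$ by $-\theta$, which preserves $d\theta$.

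The final step is identification. The function $\theta\mapsto\|\widehat D(\pm\theta)\|_{K_0}^2$ is nonnegative and lies in $L^1(\bT^d)$, with integral $\|F\|_2^2$. By \eqref{eq:lattice-spectral-measure}, a finite measure on $\bT^d$ is determined by its Fourier coefficients. Hence
\[
    d\varsigma_F^{\mathrm{lat}}(\theta)=\|\widehat D(\pm\theta)\|_{K_0}^2\,d\theta ,
\]
which is absolutely continuous.

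There is no real obstacle. The one point needing care is that $K_0$ may be infinite-dimensional, so Parseval must be applied in its Hilbert-valued form. The other is that the correspondence between $\lambda(j)$ and the character $e^{2\pi i\langle j,\theta\rangle}$ must be set up under the Koopman convention $U^kF=F\circ T^{-k}$ so that the signs in \eqref{eq:lattice-spectral-measure} match. Both points are routine bookkeeping.
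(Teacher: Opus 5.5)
Your proof is correct and follows the paper's route. The unitary $\mathcal W$ intertwines $U^j$ with the regular shift $\lambda(j)$ by \eqref{eq:regular-representation-unitary}, and the lattice Fourier transform on $L^2(\bT^d;K_0)$ turns $\lambda(j)$ into multiplication by a character; you simply write out the resulting density $\|\widehat D(-\theta)\|_{K_0}^2$, which the paper leaves implicit in the phrase ``a multiple of the regular representation'' and which reappears as $\E\|\Gamma_Y(\theta)\|_H^2$ in Section~\ref{sec:lattice-clt}.
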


For $d\geq2$, this spectral consequence is classical: see
\cite[Theorem~6]{Kam96} for $d=2$ and Kami\'nski--Liardet \cite{KL94} for
general $\bZ^d$. We include the derivation because Section~\ref{sec:lattice-clt}
uses the explicit martingale-difference decomposition, not merely its spectral
consequence.

\begin{corollary}[Absolute continuity of the Bartlett spectrum]
\label{cor:cpe-bartlett-ac}
Let $\eta$ be a stationary random measure on $\bR^d$ with local second
moments. If its translation action is essentially free and CPE, then
\begin{equation}
\label{eq:cpe-bartlett-density}
    d\sigma_\eta(\xi)=s_\eta(\xi)\,d\lambda_d(\xi)
\end{equation}
for some $s_\eta\in L^1_{\mathrm{loc}}(\bR^d)$, and
$s_\eta\lambda_d$ is translation bounded.
\end{corollary}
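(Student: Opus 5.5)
The argument chains three earlier results. By Proposition~\ref{prop:dooley-golodets-lattice-cpe}, essential freeness and CPE of the $\bR^d$-action imply that the standard-lattice subaction $T^k\omega=k.\omega$ is CPE. After discarding an invariant null set we may assume the action is free, so the proposition applies. Corollary~\ref{cor:cpe-lebesgue-lattice-spectrum} then shows that every centered Koopman spectral measure of $T$ is absolutely continuous with respect to $d\theta$ on $\bT^d$. The remaining task is to transfer this lattice information back to the continuum Bartlett spectrum, and Corollary~\ref{cor:lebesgue-lattice-spectrum-implies-bartlett-ac} is exactly the tool for that.

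To apply Corollary~\ref{cor:lebesgue-lattice-spectrum-implies-bartlett-ac}, I need some $f\in\mathcal S(\bR^d)$ whose Fourier transform vanishes nowhere. The Gaussian $f(x)=e^{-\pi\|x\|^2}$ works, since $\hat f=f>0$. By \eqref{eq:schwartz-linear-statistic-L2-bound}, $(\mathbb S f)^\circ\in L^2_0(\eta)$. Corollary~\ref{cor:cpe-lebesgue-lattice-spectrum} then makes $\varsigma^{\mathrm{lat}}_{(\mathbb S f)^\circ}$ absolutely continuous, and Corollary~\ref{cor:lebesgue-lattice-spectrum-implies-bartlett-ac} gives $\sigma_\eta\ll\lambda_d$.

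Since $\sigma_\eta$ is a positive Radon measure, it is locally finite. Radon--Nikodym on each ball then produces a density $s_\eta\in L^1_{\mathrm{loc}}(\lambda_d)$ with $s_\eta\ge0$. Translation boundedness of $s_\eta\lambda_d=\sigma_\eta$ is already part of Proposition~\ref{prop:bartlett-spectrum}.

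I expect no serious obstacle. The only point needing care is that Proposition~\ref{prop:dooley-golodets-lattice-cpe} is stated for free ergodic actions. Ergodicity follows from CPE, and essential freeness is turned into freeness on a conull invariant Borel set without changing the system or its entropy, as remarked in Section~\ref{subsec:entropy-cpe}.
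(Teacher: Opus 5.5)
Your proposal is correct and follows essentially the same route as the paper. Both chain Proposition~\ref{prop:dooley-golodets-lattice-cpe}, Corollary~\ref{cor:cpe-lebesgue-lattice-spectrum} applied to the Gaussian $f(x)=e^{-\pi\|x\|^2}$, and Corollary~\ref{cor:lebesgue-lattice-spectrum-implies-bartlett-ac}, and then take translation boundedness from Proposition~\ref{prop:bartlett-spectrum}. Your added remarks on freeness versus essential freeness, ergodicity, and the Radon--Nikodym step make explicit what the paper leaves implicit.
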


\begin{proof}
The standard-lattice subaction is CPE by
Proposition~\ref{prop:dooley-golodets-lattice-cpe}. Take
$f(x)=e^{-\pi\|x\|^2}$, whose Fourier transform is nowhere zero. Corollary
\ref{cor:cpe-lebesgue-lattice-spectrum} makes the lattice spectral measure of
$(\mathbb S f)^\circ$ absolutely continuous, and
Corollary~\ref{cor:lebesgue-lattice-spectrum-implies-bartlett-ac} then gives
$\sigma_\eta\ll\lambda_d$. Translation boundedness follows from
Proposition~\ref{prop:bartlett-spectrum}.
\end{proof}

Absolute continuity also follows directly from the continuous-action spectral
theorem of Dooley and Golodets \cite[Theorem~3.8]{DG12}, which gives infinite
Lebesgue spectrum for free CPE $\bR^d$-actions. The lattice derivation above
is retained because Section~\ref{sec:lattice-clt} uses the explicit
martingale-difference decomposition, not absolute continuity alone.
 
\section{A lattice Fourier central limit theorem}
\label{sec:lattice-clt}

Let $T=(T^k)_{k\in\bZ^d}$ be a CPE probability-preserving action on
$(X,\mathcal F,\mu)$ and let $H$ be a separable complex Hilbert space. We prove a Fourier central
limit theorem for $H$-valued lattice fields. Section~\ref{sec:perfect-past} gives
\begin{equation}
\label{eq:section4-wandering-decomposition}
    L^2_0(X,\mu;H)
    =\bigoplus_{r\in\bZ^d}U^r(K_0\widehat\otimes H).
\end{equation}
For a fixed field $Y\in L^2_0(X,\mu;H)$, we first use this decomposition to
replace $Y$, at almost every lattice frequency, by a single
martingale-difference variable. McLeish's theorem then gives the scalar
projections of the limit. A separate tightness argument upgrades these scalar
limits to convergence in distribution in the Hilbert space $H^m$. From this
point on, CPE enters only through \eqref{eq:section4-wandering-decomposition}
and the absolute continuity of centered lattice spectral measures from
Section~\ref{sec:perfect-past}.

For $h\in C_c^\infty(\bR^d;\bC)$ and $\theta\in\bT^d$, define
\begin{equation}
\label{eq:lattice-fourier-statistic}
    V_R(h,\theta)
    :=R^{-d/2}\sum_{k\in\bZ^d}
      h(k/R)e^{-2\pi i\langle k,\theta\rangle}U^kY.
\end{equation}

\subsection{Hilbert-valued Gaussian laws and weak convergence}
\label{subsec:hilbert-gaussian-weak-convergence}

Hilbert inner products are taken linear in the first variable. Let
$\mathcal H$ be a separable complex Hilbert space. A Borel random element
$G$ of $\mathcal H$ is a \emph{centered complex Gaussian random element} if,
for every $n\geq1$ and $u_1,\ldots,u_n\in\mathcal H$, the complex vector
\[
    \bigl(\langle G,u_1\rangle_{\mathcal H},\ldots,
          \langle G,u_n\rangle_{\mathcal H}\bigr)
\]
is centered Gaussian in the sense of Subsection~\ref{subsec:complex-gaussian}.
It is \emph{proper} if
\begin{equation}
\label{eq:hilbert-properness}
    \E\!\left[
      \langle G,u\rangle_{\mathcal H}
      \langle G,v\rangle_{\mathcal H}
    \right]=0,
    \qquad u,v\in\mathcal H.
\end{equation}
Its covariance operator $C$ is the positive trace-class operator determined by
\begin{equation}
\label{eq:hilbert-gaussian-covariance-operator}
    \langle Cv,u\rangle_{\mathcal H}
    =\E\!\left[
       \langle G,u\rangle_{\mathcal H}
       \overline{\langle G,v\rangle_{\mathcal H}}
      \right].
\end{equation}
Conversely, every positive trace-class operator $C$ determines a unique
proper complex Gaussian law on $\mathcal H$ with mean zero. Indeed, if
$Ce_n=\lambda_ne_n$ in an orthonormal eigenbasis and
$(\gamma_n)$ are independent $\mathcal{CN}(0,1)$ variables, then
$\sum_n\sqrt{\lambda_n}\gamma_ne_n$ converges in $L^2(\mathcal H)$ and has
that law. These definitions apply in particular to the product Hilbert space
$H^m$. When $H$ is infinite-dimensional, the limits below are therefore
Gaussian probability measures on an infinite-dimensional Hilbert space.

A family of $\mathcal H$-valued random elements $(X_R)$ is \emph{tight} if,
for every $\delta>0$, there is a compact set $K\subset\mathcal H$ such that
$\inf_R\Pp(X_R\in K)\geq1-\delta$.

We use two standard facts about weak convergence in a separable Hilbert space,
viewed as a real Hilbert space when applying real linear functionals. First,
if $(X_R)$ is tight and every continuous real linear functional of $X_R$
converges in distribution to the corresponding functional of $X$, then
$X_R\dto X$. Second, if $P_n$ are increasing finite-rank orthogonal
projections with $P_n\to I$ strongly, then
\begin{equation}
\label{eq:hilbert-tightness-criterion}
    \sup_R\E\|X_R\|_{\mathcal H}^2<\infty,
    \qquad
    \lim_{n\to\infty}\sup_R
       \E\|(I-P_n)X_R\|_{\mathcal H}^2=0
\end{equation}
imply tightness of the laws of $X_R$. For the second statement, choose a
subsequence of the projections for which the tail bounds are summable after
Markov's inequality. With arbitrarily high probability, the resulting tail
norms then tend to zero uniformly; together with a bound on the first
finite-dimensional projection, this confines $X_R$ to a relatively compact
subset of $\mathcal H$.

\subsection{Fourier transform of the martingale-difference coefficients}
\label{subsec:lattice-spectral-coordinates}

Put $\mathscr K:=K_0\widehat\otimes H$. By
\eqref{eq:section4-wandering-decomposition}, there is a unique family
$(D_r)_{r\in\bZ^d}\in\ell^2(\bZ^d;\mathscr K)$ such that
\begin{equation}
\label{eq:Y-innovation-expansion}
    Y=\sum_{r\in\bZ^d}U^rD_r,
    \qquad
    \sum_{r\in\bZ^d}\|D_r\|_{L^2(H)}^2=\E\|Y\|_H^2.
\end{equation}
The series converges in $L^2(X,\mu;H)$. Define
\begin{equation}
\label{eq:Gamma-Y-definition}
    \Gamma_Y(\theta)
    :=\sum_{r\in\bZ^d}e^{2\pi i\langle r,\theta\rangle}D_r,
    \qquad \theta\in\bT^d,
\end{equation}
where the Fourier series is interpreted in
$L^2(\bT^d;\mathscr K)$. Thus $\Gamma_Y$ is the Fourier transform of the
martingale-difference coefficient sequence $(D_r)$.

We fix the following torus version of the Lebesgue-point convention. For
$0<\eps<1/2$, let $B_\eps^{\bT}(\theta)$ be the ball of radius $\eps$ for
$\|\cdot\|_{\bT^d}$; its normalized Lebesgue measure is
$\lambda_d(B_\eps)$. If a representative of $q\in L^1(\bT^d)$ is fixed, we
call $\theta$ a \emph{Lebesgue point} of $q$ if
\begin{equation}
\label{eq:torus-lebesgue-point-definition}
    \frac{1}{\lambda_d(B_\eps)}
    \int_{B_\eps^{\bT}(\theta)}|q(t)-q(\theta)|\,dt
    \longrightarrow0
    \qquad(\eps\downarrow0).
\end{equation}
For a fixed representative of
$\Gamma\in L^2(\bT^d;\mathscr K)$, we call $\theta$ an
\emph{$L^2$-Lebesgue point} if
\begin{equation}
\label{eq:L2-Lebesgue-point-definition}
    \frac{1}{\lambda_d(B_\eps)}
    \int_{B_\eps^{\bT}(\theta)}
       \|\Gamma(t)-\Gamma(\theta)\|_{\mathscr K}^2\,dt
    \longrightarrow0.
\end{equation}
The Lebesgue differentiation theorem, applied to the separable Hilbert-valued
function $\Gamma$ and to $\|\Gamma\|_{\mathscr K}^2$, shows that almost every
$\theta$ is an $L^2$-Lebesgue point: expand
$\|\Gamma(t)-\Gamma(\theta)\|_{\mathscr K}^2$ and use differentiation of both
$\Gamma$ and $\|\Gamma\|_{\mathscr K}^2$ at $\theta$.

Choose a measurable representative of $\Gamma_Y$ and let $E_Y$ be its set of
$L^2$-Lebesgue points. Then $E_Y$ has full Lebesgue measure in $\bT^d$. Set
\begin{equation}
\label{eq:D-theta-definition}
    D_\theta:=\Gamma_Y(\theta)\in\mathscr K,
    \qquad \theta\in E_Y,
\end{equation}
and put $D_\theta:=0$ on $\bT^d\setminus E_Y$. Define the positive trace-class
operator $\mathcal F_Y(\theta)$ on $H$ by
\begin{equation}
\label{eq:operator-spectral-density}
    \langle \mathcal F_Y(\theta)v,u\rangle_H
    =\E\!\left[
       \langle D_\theta,u\rangle_H
       \overline{\langle D_\theta,v\rangle_H}
    \right],
    \qquad u,v\in H.
\end{equation}
Its trace is $\E\|D_\theta\|_H^2$. For $u,v\in H$, orthogonality of the
spaces $U^kK_0$ and Parseval give, for every $k\in\bZ^d$,
\begin{align}
\label{eq:scalar-cross-spectral-parseval}
    \E\!\left[
      \langle U^kY,u\rangle_H
      \overline{\langle Y,v\rangle_H}
    \right]
    &=\sum_{r\in\bZ^d}
      \E\!\left[
        \langle D_r,u\rangle_H
        \overline{\langle D_{r+k},v\rangle_H}
      \right] \\
    &=\int_{\bT^d}e^{2\pi i\langle k,\theta\rangle}
      \E\!\left[
        \langle\Gamma_Y(\theta),u\rangle_H
        \overline{\langle\Gamma_Y(\theta),v\rangle_H}
      \right]d\theta.
\end{align}
Hence the scalar lattice cross-spectral density of
$\langle Y,u\rangle_H$ and $\langle Y,v\rangle_H$ is, for almost every
$\theta$, the right-hand side of \eqref{eq:operator-spectral-density}.
Taking an orthonormal basis of $H$ and summing the diagonal identities gives
\begin{equation}
\label{eq:operator-density-trace-integral}
    \int_{\bT^d}\tr\mathcal F_Y(\theta)\,d\theta
    =\E\|Y\|_H^2.
\end{equation}

\begin{theorem}[Lattice Fourier CLT]
\label{thm:weighted-lattice-fourier-clt}
For every $\theta\in E_Y$ with $2\theta\neq0$ and every
$h_1,\ldots,h_m\in C_c^\infty(\bR^d;\bC)$, there is an $H^m$-valued complex Gaussian random element with mean zero
\[
    G_\theta
    =\bigl(G_\theta(h_1),\ldots,G_\theta(h_m)\bigr)
\]
such that
\begin{equation}
\label{eq:weighted-lattice-clt-joint}
    \bigl(V_R(h_1,\theta),\ldots,V_R(h_m,\theta)\bigr)
    \dto G_\theta
    \qquad\text{in }H^m.
\end{equation}
Its covariance is determined by
\begin{equation}
\label{eq:lattice-limit-covariance}
    \E\!\left[
      \langle G_\theta(h_a),u\rangle_H
      \overline{\langle G_\theta(h_b),v\rangle_H}
    \right]
    =\left(\int_{\bR^d}h_a\overline{h_b}\right)
      \langle\mathcal F_Y(\theta)v,u\rangle_H,
\end{equation}
for $u,v\in H$ and $1\leq a,b\leq m$, and
\begin{equation}
\label{eq:lattice-limit-properness}
    \E\!\left[
      \langle G_\theta(h_a),u\rangle_H
      \langle G_\theta(h_b),v\rangle_H
    \right]=0
\end{equation}
for all such $a,b,u,v$.
\end{theorem}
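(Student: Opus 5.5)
Fix $\theta\in E_Y$ with $2\theta\neq0$. The plan is a martingale approximation at the frequency $\theta$. Write $V_R(h,\theta)=\sum_r R^{-d/2}\sum_k h(k/R)e^{-2\pi i\langle k,\theta\rangle}U^{k+r}D_r$, and compare it with
\[
    W_R(h,\theta):=R^{-d/2}\sum_{k\in\bZ^d}h(k/R)e^{-2\pi i\langle k,\theta\rangle}U^kD_\theta .
\]
The variables $U^kD_\theta$ form a stationary family of lexicographic martingale differences by \eqref{eq:lex-martingale-difference}. The first step is to show
\[
    \E\|V_R(h,\theta)-W_R(h,\theta)\|_H^2\to0 .
\]
Reindex $V_R$ so that the coefficient of $U^j$ is $\sum_r h((j-r)/R)e^{-2\pi i\langle j-r,\theta\rangle}D_r$. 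The orthogonality in \eqref{eq:section4-wandering-decomposition} then gives
\[
    \E\|V_R-W_R\|^2=R^{-d}\sum_j\Bigl\|\sum_r h((j-r)/R)e^{2\pi i\langle r,\theta\rangle}D_r-h(j/R)D_\theta\Bigr\|^2 .
\]
Pass to the Fourier side in $j$ using $\Gamma_Y$ and the periodized Fourier transform of $h(\cdot/R)$. This turns the expression into
\[
    \int_{\bT^d}\|\Gamma_Y(t)-\Gamma_Y(\theta)\|^2\,\Phi_R(t-\theta)\,dt ,
\]
up to an error. Here $\Phi_R(t)=R^{-d}\bigl|\sum_k h(k/R)e^{-2\pi i\langle k,t\rangle}\bigr|^2$ is a Fejér-type kernel. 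It has mass $\approx\|h\|_2^2$, it concentrates at scale $1/R$, and it decays rapidly because $h$ is smooth. The $L^2$-Lebesgue point property \eqref{eq:L2-Lebesgue-point-definition}, with a dyadic annulus decomposition as in Lemma~\ref{lem:differentiation-decaying-kernels}, makes this integral tend to zero.

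Next I would prove the CLT for $W_R$. Fix $u_1,\dots,u_m\in H$ and a real linear functional $\Re\sum_a c_a\langle W_R(h_a,\theta),u_a\rangle$. It is a weighted sum $\sum_k \Re(a_{k,R}\xi_k)$ with $\xi_k=U^k\zeta$, where $\zeta$ ranges over finitely many scalar martingale differences built from $D_\theta$. Order the terms lexicographically on the bounded box containing the supports and apply McLeish's theorem. The negligibility condition $\max_k|a_{k,R}|\to0$ holds because $|a_{k,R}|=O(R^{-d/2})$, and uniform integrability of $\max|\xi_k|^2$ follows from stationarity and $\xi_k\in L^2$.

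The main point is the conditional-variance law of large numbers
\[
    \sum_k|a_{k,R}|^2\,\Re\bigl(\cdots\bigr)^2\to\text{const}\quad\text{in }L^1 .
\]
Expanding $\Re(z)^2=\tfrac12|z|^2+\tfrac12\Re z^2$ reduces this to two kinds of sums. The first kind are sums like $R^{-d}\sum_k h_a\overline{h_b}(k/R)\,U^k(\zeta\bar\zeta')$. These converge in $L^1$ to $\int h_a\overline{h_b}\,\E[\zeta\bar\zeta']$ by the mean ergodic theorem for the CPE, hence ergodic, $\bZ^d$-action, together with smooth weights via Riemann-sum and summation-by-parts approximation. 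The second kind carry the oscillating factor $e^{-4\pi i\langle k,\theta\rangle}$. They converge to zero by the weighted von Neumann theorem, because $2\theta\neq0$ and CPE leaves no eigenvalues: Corollary~\ref{cor:cpe-lebesgue-lattice-spectrum} gives absolutely continuous spectrum on $L^2_0$, and the constant part is killed by $\sum_k e^{-4\pi i\langle k,\theta\rangle}h(k/R)=o(R^d)$ for $2\theta\neq0$. This step yields properness \eqref{eq:lattice-limit-properness}, and the $|z|^2$ terms give the covariance \eqref{eq:lattice-limit-covariance} through \eqref{eq:operator-spectral-density}. I expect this step to be the main obstacle, mostly because the products $\zeta\bar\zeta'$ are only in $L^1$ and a truncation is needed. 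I would first truncate $\zeta$ in $L^2$, which is legitimate since the martingale CLT error is controlled in $L^2$ uniformly in $R$, and then apply the ergodic theorem to bounded products.

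Finally, to upgrade from scalar functionals to convergence in $H^m$, I would verify the tightness criterion \eqref{eq:hilbert-tightness-criterion} for $W_R$. By orthogonality,
\[
    \E\|(I-P_n)W_R(h,\theta)\|^2=R^{-d}\sum_k|h(k/R)|^2\,\E\|(I-P_n)D_\theta\|^2\le C\|h\|_\infty^2\,\E\|(I-P_n)D_\theta\|^2\to0
\]
uniformly in $R$. Combined with the $L^2$ approximation of the first step, this transfers tightness and the scalar limits to $V_R$. The limit is then identified as the proper Gaussian with the covariance operator $\bigl(\int h_a\overline{h_b}\bigr)\mathcal F_Y(\theta)$.
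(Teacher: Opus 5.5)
Your proposal is correct and follows essentially the same route as the paper. That route is: a frequency-dependent martingale approximation through the $L^2$-Lebesgue point property of $\Gamma_Y$ and the smooth lattice Fej\'er-type kernel (the spectral identity there is in fact exact, with no error term); McLeish's theorem for real linear functionals, with the sum of squares handled by weighted ergodic averages and an $L^1$ truncation (nonoscillatory terms by ergodicity, the $e^{-4\pi i\langle k,\theta\rangle}$ terms by absolutely continuous spectrum and $2\theta\neq0$); and the projection tightness criterion transferred back to $V_R$. The only small omissions are both immediate: the degenerate case of zero limiting variance, which McLeish's theorem excludes but which follows from the second moments tending to zero by orthogonality, and the phase-rotation argument that turns the vanishing $\Re z^2$ terms into properness.
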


\subsection{Approximate identity and martingale approximation}
\label{subsec:lattice-martingale-approximation}

For $h\in C_c^\infty(\bR^d;\bC)$, define on $\bT^d$
\begin{equation}
\label{eq:lattice-kernel-definition}
    A_R^h(s)
    :=R^{-d/2}\sum_{k\in\bZ^d}h(k/R)e^{2\pi i\langle k,s\rangle},
    \qquad K_R^h(s):=|A_R^h(s)|^2.
\end{equation}
\begin{lemma}[Smooth lattice approximate identity]
\label{lem:smooth-lattice-approximate-identity}
For every $h\in C_c^\infty(\bR^d;\bC)$ and every $N\geq1$, there is
$C_{h,N}<\infty$ such that, for $R\geq1$ and $s\in\bT^d$,
\begin{equation}
\label{eq:lattice-kernel-decay}
    K_R^h(s)
    \leq C_{h,N}R^d
      \bigl(1+R\|s\|_{\bT^d}\bigr)^{-N}.
\end{equation}
Moreover,
\begin{equation}
\label{eq:lattice-kernel-mass}
    \int_{\bT^d}K_R^h(s)\,ds
    =R^{-d}\sum_{k\in\bZ^d}|h(k/R)|^2
    \longrightarrow\|h\|_2^2.
\end{equation}
Consequently, if $q\in L^1(\bT^d)$, the chosen representative satisfies
$q(0)=0$, and $0$ is a Lebesgue point in the sense of
\eqref{eq:torus-lebesgue-point-definition}, then
\begin{equation}
\label{eq:lattice-approximate-identity-conclusion}
    \int_{\bT^d}K_R^h(s)q(s)\,ds\longrightarrow0.
\end{equation}
\end{lemma}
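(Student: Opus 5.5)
The plan is to use Poisson summation for the decay bound, Parseval for the mass identity, and then split the integral into a near region and a far region for the approximate-identity conclusion.

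\emph{Decay bound.} Put $\phi_R(x):=h(x/R)$, so that $\hat\phi_R(\lambda)=R^d\hat h(R\lambda)$. Since $h\in C_c^\infty$, Poisson summation gives
\[
    \sum_{k\in\bZ^d}h(k/R)e^{2\pi i\langle k,s\rangle}
    =\sum_{m\in\bZ^d}\hat\phi_R(m-s)
    =R^d\sum_{m\in\bZ^d}\hat h\bigl(R(m-s)\bigr).
\]
Hence $A_R^h(s)=R^{d/2}\sum_m\hat h(R(m-s))$. Take $s$ in the fundamental domain $[-1/2,1/2)^d$, so that $\|s\|=\|s\|_{\bT^d}$. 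The Schwartz bound is $|\hat h(u)|\leq C_N'(1+\|u\|)^{-N'}$.
\begin{itemize}
\item The term $m=0$ is at most $C(1+R\|s\|_{\bT^d})^{-N'}$.
\item For $m\neq0$ we have $\|m-s\|\geq\|m\|/2\geq1/2$. The tail sum is therefore at most $C\sum_{m\neq0}(R\|m\|)^{-N'}\leq CR^{-N'}$ once $N'>d$.
\item Since $R\|s\|_{\bT^d}\leq\sqrt d\,R$, the tail is also bounded by $C(1+R\|s\|_{\bT^d})^{-N'}$.
\end{itemize}
Squaring gives $K_R^h(s)\leq C R^d(1+R\|s\|_{\bT^d})^{-2N'}$, which is the bound \eqref{eq:lattice-kernel-decay}.

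\emph{Mass identity.} The identity in \eqref{eq:lattice-kernel-mass} is Parseval on $\bT^d$ for the finite trigonometric polynomial $A_R^h$. The convergence $R^{-d}\sum_k|h(k/R)|^2\to\|h\|_2^2$ holds because this is a Riemann sum of the continuous compactly supported function $|h|^2$ on the grid $R^{-1}\bZ^d$.

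\emph{Approximate-identity conclusion.} Fix $\eps\in(0,1/2)$ and split $\bT^d$ into the ball $B_\eps^{\bT}(0)$ and its complement.

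On the complement, the decay bound with $N=d+1$ gives
\[
    K_R^h\leq CR^d(R\eps)^{-d-1}=C\eps^{-d-1}R^{-1}.
\]
That contribution is therefore at most $C\eps^{-d-1}R^{-1}\|q\|_{L^1}$, which tends to $0$.

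On the ball, decompose dyadically into annuli $\{2^{-j-1}\eps<\|s\|\leq 2^{-j}\eps\}$ for $j\geq0$, plus the inner ball $\|s\|\leq 1/R$. Write $\omega(r):=\lambda_d(B_r)^{-1}\int_{B_r^{\bT}(0)}|q|$. Because $q(0)=0$ and $0$ is a Lebesgue point, $\omega(r)\to0$ as $r\to0$, and $\omega$ is bounded by some $\omega^*(\eps)$ on $(0,\eps]$.
\begin{itemize}
\item The inner ball contributes at most $CR^d\lambda_d(B_{1/R})\,\omega(1/R)=C\omega(1/R)$.
\item The annulus at radius $r=2^{-j}\eps\geq1/R$ contributes at most $CR^d(Rr)^{-N}r^d\omega(r)=C(Rr)^{d-N}\omega(r)$.
\end{itemize}
Choose $N=d+1$ and sum over the dyadic radii $r\geq 1/R$. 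The factors $(Rr)^{-1}$ form a geometric series, so the ball contributes at most $C\sup_{0<r\leq\eps}\omega(r)$. This is small once $\eps$ is small.

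To conclude, first choose $\eps$ so that $\sup_{r\leq\eps}\omega(r)$ is small, and then let $R\to\infty$.

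The only mildly delicate step is organizing this dyadic sum so that the constant does not depend on $R$. It works because the decay exponent $N=d+1$ exceeds the dimension $d$.
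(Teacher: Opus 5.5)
Your proposal is correct and follows essentially the same route as the paper: Poisson summation for the decay bound, orthogonality of characters plus a Riemann sum for the mass identity, and a near/far split with a dyadic decomposition around $0$ controlled by the Lebesgue-point bound. The differences are only cosmetic. You fix $N=d+1$ and index the annuli inward from $\eps$, whereas the paper takes a general $N>d$ and annuli $2^j/R<\|s\|_{\bT^d}\leq2^{j+1}/R$ inside $B_\delta^{\bT}(0)$. You also spell out the Poisson-summation tail estimate that the paper compresses into a single sentence.
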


\begin{proof}
Poisson summation, with the Fourier convention
\eqref{eq:fourier-convention}, gives
\begin{equation}
\label{eq:poisson-lattice-kernel}
    A_R^h(s)
    =R^{d/2}\sum_{n\in\bZ^d}\hat h\bigl(R(n-s)\bigr).
\end{equation}
Since $\hat h$ is Schwartz, summing its decay over $n\in\bZ^d$ yields
\eqref{eq:lattice-kernel-decay}, after increasing the decay exponent before
squaring. Orthogonality of the characters on $\bT^d$ gives the identity in
\eqref{eq:lattice-kernel-mass}, and the limit is the usual Riemann sum.

For \eqref{eq:lattice-approximate-identity-conclusion}, fix $N>d$ and
$\varepsilon>0$. Since $0$ is a Lebesgue point of $q$ and $q(0)=0$, choose
$\delta>0$ so that
\[
    \int_{B_r^{\bT}(0)}|q(s)|\,ds
    \leq \varepsilon\,\lambda_d(B_r)
    \qquad(0<r\leq\delta).
\]
The decay estimate \eqref{eq:lattice-kernel-decay} gives an
$O(\varepsilon)$ contribution on $B_{1/R}^{\bT}(0)$. Decomposing the rest of
$B_\delta^{\bT}(0)$ into the dyadic annuli
$2^j/R<\|s\|_{\bT^d}\leq2^{j+1}/R$, their contributions are bounded by
$C_{h,N}\varepsilon 2^{-j(N-d)}$, hence sum to $O(\varepsilon)$. On
$\{\|s\|_{\bT^d}>\delta\}$, the same decay estimate gives
$O(R^{d-N})\|q\|_1=o(1)$. Since $\varepsilon$ is arbitrary,
\eqref{eq:lattice-approximate-identity-conclusion} follows.
\end{proof}

For $\theta\in E_Y$, set
\begin{equation}
\label{eq:martingale-approximant-definition}
    V_R^D(h,\theta)
    :=R^{-d/2}\sum_{k\in\bZ^d}
      h(k/R)e^{-2\pi i\langle k,\theta\rangle}U^kD_\theta.
\end{equation}
The element $D_\theta$ belongs to the martingale-difference space
$\mathscr K=K_0\widehat\otimes H$. Hence the summands in
\eqref{eq:martingale-approximant-definition} are martingale differences in
lexicographic order.

\begin{proposition}[Frequency-dependent martingale approximation]
\label{prop:frequency-martingale-approximation}
For every $\theta\in E_Y$ and every
$h\in C_c^\infty(\bR^d;\bC)$,
\begin{equation}
\label{eq:martingale-approximation-L2}
    \E\|V_R(h,\theta)-V_R^D(h,\theta)\|_H^2
    \longrightarrow0.
\end{equation}
\end{proposition}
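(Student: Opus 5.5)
Write $Y=\sum_rU^rD_r$ by \eqref{eq:Y-innovation-expansion} and compute the difference $V_R(h,\theta)-V_R^D(h,\theta)$ in the coordinates of the wandering decomposition \eqref{eq:section4-wandering-decomposition}. Since $\mathcal W$ extended to $\ell^2(\bZ^d;\mathscr K)$ is unitary and intertwines $U^j$ with the shift, everything reduces to an $L^2(\bT^d;\mathscr K)$ computation.

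\textbf{Step 1: coefficients of $V_R$.} Using $U^kY=\sum_rU^{k+r}D_r$, the coefficient of $V_R(h,\theta)$ in the summand $U^j\mathscr K$ is
\[
    c_j:=R^{-d/2}\sum_k h(k/R)e^{-2\pi i\langle k,\theta\rangle}D_{j-k}.
\]
Under the Fourier transform in $j$, with the convention of \eqref{eq:Gamma-Y-definition}, this sequence corresponds to
\[
    \widehat c(t)=A_R^h(\theta-t)\,\Gamma_Y(t),
\]
up to sign conventions in the argument of $A_R^h$; these will be fixed carefully.

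\textbf{Step 2: coefficients of $V_R^D$.} The coefficients of $V_R^D(h,\theta)$ are $R^{-d/2}h(j/R)e^{-2\pi i\langle j,\theta\rangle}D_\theta$. By the same transform, they correspond to $A_R^h(\theta-t)D_\theta$, where $D_\theta=\Gamma_Y(\theta)$ is a constant vector in $t$.

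\textbf{Step 3: reduction to an approximate identity.} Orthogonality of the spaces $U^j\mathscr K$ and Parseval give
\[
    \E\|V_R(h,\theta)-V_R^D(h,\theta)\|_H^2
    =\int_{\bT^d}K_R^h(\theta-t)\,\|\Gamma_Y(t)-\Gamma_Y(\theta)\|_{\mathscr K}^2\,dt.
\]
Set $q(s):=\|\Gamma_Y(\theta-s)-\Gamma_Y(\theta)\|_{\mathscr K}^2$. This $q$ lies in $L^1(\bT^d)$, satisfies $q(0)=0$, and has $0$ as a Lebesgue point in the sense of \eqref{eq:torus-lebesgue-point-definition}, because $\theta\in E_Y$ is an $L^2$-Lebesgue point by \eqref{eq:L2-Lebesgue-point-definition} and the torus ball is symmetric under $s\mapsto-s$. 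Lemma~\ref{lem:smooth-lattice-approximate-identity} then gives \eqref{eq:martingale-approximation-L2}.

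\textbf{Expected obstacle.} The main difficulty is not analytic but bookkeeping. One must check that the Fourier identification of $\ell^2(\bZ^d;\mathscr K)$ with $L^2(\bT^d;\mathscr K)$ sends the convolution in Step 1 to multiplication by exactly the kernel $A_R^h$ evaluated at $\pm(\theta-t)$. This includes confirming that $D_\theta$ lies in $\mathscr K$, so that both expansions live in the same orthogonal decomposition. Since $K_R^h=|A_R^h|^2$, the sign ambiguity is harmless provided the same reflected argument is used consistently in both terms.

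A second point is convergence of the series. The $h$-sums are finite because $h$ has compact support, so $V_R$ and $V_R^D$ are finite sums and Parseval applies directly. No separate limiting argument is needed here.
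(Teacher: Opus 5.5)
Your proposal is correct and is essentially the paper's proof. Pass to $L^2(\bT^d;\mathscr K)$ via the unitary $\mathcal W$, note that $V_R(h,\theta)$ and $V_R^D(h,\theta)$ are multiplied by the same trigonometric polynomial, so the error equals $\int K_R^h\,\|\Gamma_Y(t)-\Gamma_Y(\theta)\|_{\mathscr K}^2\,dt$, and conclude with Lemma~\ref{lem:smooth-lattice-approximate-identity} at the $L^2$-Lebesgue point $\theta$. The sign you left open resolves, with the convention \eqref{eq:Gamma-Y-definition}, to the multiplier $A_R^h(t-\theta)$, so you can take $q(s)=\|\Gamma_Y(\theta+s)-\Gamma_Y(\theta)\|_{\mathscr K}^2$ with no reflection; for complex $h$ the sign does matter for the identity itself (since $K_R^h(-s)=K_R^{\bar h}(s)$), but not for the conclusion.
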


\begin{proof}
Under the unitary regular-representation identification from
Section~\ref{sec:perfect-past}, $Y$ corresponds to $\Gamma_Y$, while the
single martingale difference $D_\theta$ corresponds to the constant function
$t\mapsto D_\theta$. Hence $Y-D_\theta$ corresponds to
$t\mapsto\Gamma_Y(t)-\Gamma_Y(\theta)$. The coefficient sequence in
\eqref{eq:lattice-fourier-statistic} has Fourier transform
$A_R^h(t-\theta)$. The spectral isometry therefore gives
\begin{equation}
\label{eq:martingale-approximation-isometry}
    \E\|V_R(h,\theta)-V_R^D(h,\theta)\|_H^2
    =\int_{\bT^d}K_R^h(t-\theta)
      \|\Gamma_Y(t)-\Gamma_Y(\theta)\|_{\mathscr K}^2\,dt.
\end{equation}
By the definition of $E_Y$, the function
\[
    q_\theta(s)
    :=\|\Gamma_Y(\theta+s)-\Gamma_Y(\theta)\|_{\mathscr K}^2
\]
has $q_\theta(0)=0$, and $0$ is a Lebesgue point of $q_\theta$ in the sense of
\eqref{eq:torus-lebesgue-point-definition}. After the change of variables
$s=t-\theta$, the conclusion follows from
Lemma~\ref{lem:smooth-lattice-approximate-identity}.
\end{proof}

\subsection{Oscillatory ergodic averages and the martingale CLT}
\label{subsec:weighted-ergodic-martingale-clt}

To apply McLeish's theorem we must identify the limit of the sum of squared
martingale differences. The following weighted ergodic lemma treats both the
nonoscillatory terms and the oscillatory terms that occur in this calculation.

\begin{lemma}[Weighted oscillatory ergodic averages]
\label{lem:weighted-oscillatory-ergodic}
Under the standing CPE hypothesis, let $q\in L^1(X,\mu)$ and
$w\in C_c^\infty(\bR^d;\bC)$. For
$\alpha\in\bT^d$, put
\begin{equation}
\label{eq:weighted-ergodic-average}
    B_R(\alpha;w,q)
    :=R^{-d}\sum_{k\in\bZ^d}
      w(k/R)e^{2\pi i\langle k,\alpha\rangle}U^kq.
\end{equation}
Then, in $L^1(X,\mu)$,
\begin{equation}
\label{eq:weighted-ergodic-limit}
    B_R(\alpha;w,q)
    \longrightarrow
    \begin{cases}
      \displaystyle \left(\int_{\bR^d}w(x)\,dx\right)\E q,
          & \alpha=0,\\[6pt]
      0, & \alpha\neq0.
    \end{cases}
\end{equation}
\end{lemma}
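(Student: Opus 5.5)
We reduce to the $L^2$ case and then use the spectral consequence of CPE, namely Corollary~\ref{cor:cpe-lebesgue-lattice-spectrum}. The operator $q\mapsto B_R(\alpha;w,q)$ is bounded on $L^1(X,\mu)$ uniformly in $R\geq1$. Indeed, each $U^k$ is an isometry of $L^1$, so
\[
    \|B_R(\alpha;w,q)\|_1\leq R^{-d}\sum_{k}|w(k/R)|\,\|q\|_1,
\]
and the Riemann sum $R^{-d}\sum_k|w(k/R)|$ is bounded by a constant $C_w$. The right-hand side of \eqref{eq:weighted-ergodic-limit} is also $L^1$-continuous in $q$. Since $L^2$ is dense in $L^1$, a standard $\varepsilon/3$ argument therefore reduces the claim to $q\in L^2(X,\mu)$, where $L^2$ convergence implies $L^1$ convergence.

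For $q\in L^2$, write $q=\E q+q_0$ with $q_0\in L^2_0$. The constant part contributes
\[
    \E q\cdot R^{-d}\sum_k w(k/R)e^{2\pi i\langle k,\alpha\rangle}.
\]
By Poisson summation, exactly as in \eqref{eq:poisson-lattice-kernel}, this coefficient equals $\sum_{n\in\bZ^d}\hat w(R(n-\alpha))$. For $\alpha=0$ in $\bT^d$ it tends to $\hat w(0)=\int w$. For $\alpha\neq0$ we have $\|\alpha\|_{\bT^d}>0$, and the rapid decay of $\hat w$ makes it tend to $0$. This step gives exactly the right-hand side of \eqref{eq:weighted-ergodic-limit}.

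For the centered part we use the lattice spectral measure $\varsigma^{\mathrm{lat}}_{q_0}$ from \eqref{eq:lattice-spectral-measure}. Expanding the square gives
\[
    \|B_R(\alpha;w,q_0)\|_2^2
    =R^{-d}\int_{\bT^d}K_R^{w}(t+\alpha)\,d\varsigma^{\mathrm{lat}}_{q_0}(t).
\]
The sign convention is immaterial, since the argument works for every $\alpha$. By Corollary~\ref{cor:cpe-lebesgue-lattice-spectrum}, $d\varsigma^{\mathrm{lat}}_{q_0}=g\,d\theta$ with $g\in L^1(\bT^d)$. By \eqref{eq:lattice-kernel-decay}, the function $R^{-d}K_R^w$ is bounded by a constant $C$ uniformly in $R$, and it tends to $0$ pointwise away from the single point $t=-\alpha$, which is a $d\theta$-null set. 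Dominated convergence with dominating function $Cg$ then gives $\|B_R(\alpha;w,q_0)\|_2\to0$ for every $\alpha$.

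The only real input is the absolute continuity of the centered spectral measure. This is where CPE is used: it excludes an eigenvalue or an atom at $-\alpha$, which would otherwise produce a nonzero limit. No step here looks like a serious obstacle. The only care needed is to check that the $L^1$ reduction is uniform in $R$, which follows from the Riemann-sum bound $C_w$.
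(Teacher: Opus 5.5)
Your proposal is correct and follows the paper's proof essentially step for step. It reduces from $L^1$ to $L^2$ via the uniform Riemann-sum bound, treats the constant part by Poisson summation, and applies dominated convergence against the absolutely continuous lattice spectral measure of the centered part. The only cosmetic differences are that you dominate $R^{-d}K_R^w$ using the decay estimate \eqref{eq:lattice-kernel-decay} rather than a direct Riemann-sum bound, and you use $L^2$-density in $L^1$ rather than bounded truncations.
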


\begin{proof}
Assume first that $q\in L^2$. Put
\begin{equation}
\label{eq:weighted-character-sum}
    b_R(s):=R^{-d}\sum_{k\in\bZ^d}
      w(k/R)e^{2\pi i\langle k,s\rangle}.
\end{equation}
The Riemann-sum bound gives $\sup_{R,s}|b_R(s)|<\infty$. Moreover,
$b_R(0)\to\int w$, while Poisson summation and the rapid decay of $\hat w$
show that $b_R(s)\to0$ for every $s\neq0$ in $\bT^d$. By the spectral
theorem,
\begin{equation}
\label{eq:weighted-ergodic-spectral-norm}
    \|B_R(\alpha;w,q)\|_2^2
    =\int_{\bT^d}|b_R(t+\alpha)|^2\,
      d\varsigma_q^{\mathrm{lat}}(t).
\end{equation}
Write $q=\E q+q^\circ$. The lattice spectral measure of $q^\circ$ is
absolutely continuous by Corollary~\ref{cor:cpe-lebesgue-lattice-spectrum}.
Here CPE is used only through absolute continuity, which rules out an atom at
the exceptional point $t=-\alpha$ for $b_R(t+\alpha)$. If $\alpha\neq0$,
dominated convergence in
\eqref{eq:weighted-ergodic-spectral-norm} gives
$B_R(\alpha;w,q^\circ)\to0$ in $L^2$, while the constant part equals
$b_R(\alpha)\E q\to0$. If $\alpha=0$, the same argument gives
$B_R(0;w,q^\circ)\to0$, and the constant part converges to
$(\int w)\E q$.

For general $q\in L^1$, choose bounded truncations $q_M\to q$ in $L^1$.
Since
\begin{equation}
\label{eq:weighted-ergodic-truncation-bound}
    \|B_R(\alpha;w,q-q_M)\|_1
    \leq R^{-d}\sum_k|w(k/R)|\,\|q-q_M\|_1,
\end{equation}
the error is bounded uniformly in $R$ by a constant times
$\|q-q_M\|_1$. Apply the $L^2$ result to $q_M$ and then let $M\to\infty$.
\end{proof}

We apply McLeish's martingale-array central limit theorem in the following
form \cite[Theorem~2.3]{McL74}.

\begin{lemma}[McLeish]
\label{lem:mcleish}
For each $n$, let $(X_{n,j})_{1\leq j\leq m_n}$ be a finite row of real,
square-integrable martingale differences. Suppose that, as $n\to\infty$,
\begin{align}
\label{eq:mcleish-max-prob}
    \max_{j\leq m_n}|X_{n,j}|&\longrightarrow0
      \quad\text{in probability},\\
\label{eq:mcleish-max-L2}
    \sup_n\E\max_{j\leq m_n}|X_{n,j}|^2&<\infty,\\
\label{eq:mcleish-square-sum}
    \sum_{j=1}^{m_n}X_{n,j}^2&\longrightarrow\sigma^2
      \quad\text{in probability},
\end{align}
where $\sigma^2>0$ is constant. Then
$\sum_jX_{n,j}\dto N(0,\sigma^2)$.
\end{lemma}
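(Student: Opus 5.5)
The plan is to follow McLeish's product method for characteristic functions. Convergence in distribution of $S_n:=\sum_jX_{n,j}$ to $N(0,\sigma^2)$ is equivalent to $\E e^{itS_n}\to e^{-t^2\sigma^2/2}$ for each fixed $t\in\bR$. Define $r$ by the elementary identity
\[
    e^{ix}=(1+ix)\exp\bigl(-x^2/2+r(x)\bigr),
\]
so that $|r(x)|\leq|x|^3$ for $|x|\leq1$. This gives
\[
    e^{itS_n}=T_nU_n,\qquad
    T_n:=\prod_j(1+itX_{n,j}),\qquad
    U_n:=\exp\Bigl(-\tfrac{t^2}{2}\sum_jX_{n,j}^2+\sum_jr(tX_{n,j})\Bigr),
\]
valid on the event $\{\max_j|tX_{n,j}|\leq1\}$, whose probability tends to one by \eqref{eq:mcleish-max-prob}.

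The first step is a truncation that makes $T_n$ controllable. Let $J_n$ be the first index $j$ with $\sum_{i\leq j}X_{n,i}^2>\sigma^2+1$, with $J_n:=m_n$ if there is none. Replace $X_{n,j}$ by $\tilde X_{n,j}:=X_{n,j}\ind_{\{j\leq J_n\}}$. Since $\{j\leq J_n\}$ is measurable with respect to the past, the $\tilde X_{n,j}$ are again martingale differences. By \eqref{eq:mcleish-square-sum}, $\tilde X_{n,j}=X_{n,j}$ for all $j$ with probability tending to one, so \eqref{eq:mcleish-max-prob}--\eqref{eq:mcleish-square-sum} persist and it suffices to treat the truncated array.

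For the truncated array we have $\sum_{j<J_n}\tilde X_{n,j}^2\leq\sigma^2+1$. Hence
\[
    |T_n|^2=\prod_j(1+t^2\tilde X_{n,j}^2)
    \leq e^{t^2(\sigma^2+1)}\bigl(1+t^2\max_j X_{n,j}^2\bigr),
\]
and \eqref{eq:mcleish-max-L2} gives $\sup_n\E|T_n|^2<\infty$. In particular $(T_n)$ is uniformly integrable. Expanding the product and conditioning successively on the past, the martingale-difference property gives $\E T_n=1$; the $L^2$ bound just obtained justifies the integrability at each step. On the other side, $\sum_j\tilde X_{n,j}^2\to\sigma^2$ in probability, and
\[
    \Bigl|\sum_jr(t\tilde X_{n,j})\Bigr|\leq|t|^3\max_j|X_{n,j}|\sum_j\tilde X_{n,j}^2\to0
\]
in probability. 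Therefore $U_n\to c:=e^{-t^2\sigma^2/2}$ in probability.

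To conclude, write
\[
    \E e^{itS_n}-c=\E\bigl[e^{itS_n}-cT_n\bigr]+c(\E T_n-1)=\E\bigl[e^{itS_n}-cT_n\bigr].
\]
The integrand equals $T_n(U_n-c)$ with probability tending to one, so it tends to zero in probability. It is also uniformly integrable, since $|e^{itS_n}|=1$ and $(T_n)$ is uniformly integrable. Hence its expectation tends to zero.

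The main obstacle is the uniform integrability of $T_n$. This is exactly where the stopping-time truncation and the hypothesis \eqref{eq:mcleish-max-L2} are needed. One must check in that step that truncation preserves both the martingale-difference structure and the three hypotheses.
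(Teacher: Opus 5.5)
Your argument is correct and is essentially McLeish's own proof of \cite[Theorem~2.3]{McL74}, which the paper invokes by citation rather than reproving. The ingredients match his: the product $T_n=\prod_j(1+itX_{n,j})$; the predictable stopping-time truncation, which together with \eqref{eq:mcleish-max-L2} keeps $(T_n)$ bounded in $L^2$; the identity $\E T_n=1$; and $U_n\to e^{-t^2\sigma^2/2}$ in probability.

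One minor point: with the principal logarithm, the factorization $e^{ix}=(1+ix)\exp(-x^2/2+r(x))$ holds for every real $x$. The event $\{\max_j|tX_{n,j}|\leq1\}$ is therefore needed only for the bound $|r(x)|\leq|x|^3$.
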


\begin{proposition}[Martingale Fourier CLT]
\label{prop:innovation-fourier-clt}
Let $D\in\mathscr K$ and let $\theta\in\bT^d$ satisfy $2\theta\neq0$. For
$h_1,\ldots,h_m\in C_c^\infty(\bR^d;\bC)$, set
\begin{equation}
\label{eq:innovation-fourier-statistic}
    M_R(h_a,\theta)
    :=R^{-d/2}\sum_{k\in\bZ^d}
      h_a(k/R)e^{-2\pi i\langle k,\theta\rangle}U^kD.
\end{equation}
Then there is an $H^m$-valued complex Gaussian random element with mean zero
\[
    G=\bigl(G(h_1),\ldots,G(h_m)\bigr)
\]
such that
\begin{equation}
\label{eq:innovation-fourier-convergence}
    (M_R(h_1,\theta),\ldots,M_R(h_m,\theta))\dto G
    \qquad\text{in }H^m.
\end{equation}
Its covariance is
\begin{equation}
\label{eq:innovation-fourier-covariance}
    \E\!\left[
      \langle G(h_a),u\rangle_H
      \overline{\langle G(h_b),v\rangle_H}
    \right]
    =\left(\int h_a\overline{h_b}\right)
      \E\!\left[
        \langle D,u\rangle_H
        \overline{\langle D,v\rangle_H}
      \right],
\end{equation}
and
\begin{equation}
\label{eq:innovation-fourier-properness}
    \E\!\left[
      \langle G(h_a),u\rangle_H
      \langle G(h_b),v\rangle_H
    \right]=0,
    \qquad u,v\in H.
\end{equation}
The covariance operator on $H^m$ determined by
\eqref{eq:innovation-fourier-covariance} is positive and trace class; its trace
is
\[
    \left(\sum_{a=1}^m\|h_a\|_2^2\right)\E\|D\|_H^2.
\]
Thus the Gaussian law in the statement is well defined by
Subsection~\ref{subsec:hilbert-gaussian-weak-convergence}.
\end{proposition}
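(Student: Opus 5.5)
We follow the Cramér--Wold route: first obtain scalar limits for real linear functionals via McLeish's theorem, then upgrade them using the Hilbert tightness criterion \eqref{eq:hilbert-tightness-criterion}. Fix $u_1,\ldots,u_m\in H$ and $c\in\bC^m$, and consider the real random variable
\[
    S_R:=\Re\sum_{a=1}^m c_a\langle M_R(h_a,\theta),u_a\rangle_H
    =\sum_{k\in\bZ^d}X_{R,k},
    \qquad
    X_{R,k}:=R^{-d/2}\Re\bigl(e^{-2\pi i\langle k,\theta\rangle}\beta_k\,\xi_k\bigr).
\]
Here $\beta_k$ is shorthand for the vector $(h_a(k/R))_a$, and $\xi_k:=U^k\langle D,\cdot\rangle$ paired against $(c_a,u_a)$. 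Concretely, $X_{R,k}=R^{-d/2}\Re\bigl(e^{-2\pi i\langle k,\theta\rangle}\sum_a c_ah_a(k/R)\,U^k d_a\bigr)$ with $d_a:=\langle D,u_a\rangle_H\in K_0$. Only the $O(R^d)$ indices with $k/R$ in the union of the supports contribute. Ordered lexicographically, these form a finite martingale-difference row by \eqref{eq:lex-martingale-difference}, since each $U^kd_a\in U^kK_0$ and real parts preserve the property.

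To verify McLeish's conditions we argue as follows. For \eqref{eq:mcleish-max-L2} and \eqref{eq:mcleish-max-prob}, bound $\max_k|X_{R,k}|^2\le R^{-d}\sum_k|X_{R,k}|^2\ind\{|X_{R,k}|>\eps\}+\eps^2$. Stationarity reduces the expectation of the first term to $C\,\E[|d|^2\ind\{|d|>\eps R^{d/2}/C\}]\to0$ for $d=\sum_a|c_a||d_a|$, which gives a Lindeberg-type control and hence both conditions. The main step is \eqref{eq:mcleish-square-sum}. Writing $\Re(z)^2=\tfrac12|z|^2+\tfrac12\Re(z^2)$, we get
\[
    \sum_kX_{R,k}^2=\tfrac12\Re\sum_{a,b}c_a\overline{c_b}\,R^{-d}\sum_k h_a\overline{h_b}(k/R)\,U^k(d_a\overline{d_b})
    +\tfrac12\Re\sum_{a,b}c_ac_b\,R^{-d}\sum_kh_ah_b(k/R)e^{-4\pi i\langle k,\theta\rangle}U^k(d_ad_b).
\]
Lemma~\ref{lem:weighted-oscillatory-ergodic} applies with $q=d_a\overline{d_b}\in L^1$ and $\alpha=0$ in the first sum, and with $q=d_ad_b$ and $\alpha=-2\theta\neq0$ in the second. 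The first sum converges in $L^1$ to $\tfrac12\Re\sum c_a\overline{c_b}(\int h_a\overline{h_b})\E[d_a\overline{d_b}]$, and the second converges to $0$; the hypothesis $2\theta\neq0$ is used exactly here. The limit $\sigma^2$ is half of the Hermitian form given by \eqref{eq:innovation-fourier-covariance}. If $\sigma^2=0$, then $\E S_R^2\to0$ by the isometry $\E S_R^2\le$ the same sums, so $S_R\to0$ in $L^2$. Otherwise McLeish gives $S_R\dto N(0,\sigma^2)$. Both outcomes coincide with the law of $\Re\sum c_a\langle G(h_a),u_a\rangle$ for a proper Gaussian $G$ with covariance \eqref{eq:innovation-fourier-covariance}, because properness makes the variance of the real part half of $\E|\cdot|^2$. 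This identifies every real linear functional on $H^m$, hence the finite-dimensional distributions and properness \eqref{eq:innovation-fourier-properness}.

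For tightness, let $P_n$ be the projections onto the span of the first $n$ vectors of an orthonormal basis $(e_j)$ of $H$, acting coordinatewise on $H^m$. Orthogonality of the $U^kD$ components (distinct $k$ lie in orthogonal summands, with $H$-valued martingale differences) gives
\[
    \E\|(I-P_n)M_R(h_a,\theta)\|_H^2=R^{-d}\sum_k|h_a(k/R)|^2\,\E\|(I-P_n)D\|_H^2.
\]
This is bounded uniformly in $R$ by $C\,\E\|(I-P_n)D\|^2\to0$, so \eqref{eq:hilbert-tightness-criterion} holds. The trace of the covariance operator is computed by summing \eqref{eq:innovation-fourier-covariance} over the basis, giving $(\sum_a\|h_a\|_2^2)\E\|D\|_H^2$. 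Positivity is immediate from the Gram form. The step I expect to require the most care is the square-sum identification: it requires $L^1$, not $L^2$, ergodic control of products $d_ad_b$, and precisely this is why Lemma~\ref{lem:weighted-oscillatory-ergodic} was stated for $q\in L^1$.
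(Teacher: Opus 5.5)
Your proposal is correct and follows essentially the same route as the paper. The paper's proof has the same four components:
\begin{itemize}
\item a Cram\'er--Wold reduction to real functionals, written as a lexicographically ordered martingale-difference row;
\item McLeish's theorem, with the square sum identified via $(\Re z)^2=\tfrac12|z|^2+\tfrac12\Re z^2$ and Lemma~\ref{lem:weighted-oscillatory-ergodic}, where $2\theta\neq0$ kills the oscillatory term;
\item a separate treatment of the degenerate case $\sigma^2=0$;
\item tightness from the orthogonality identity for $(I-P_n)M_R$.
\end{itemize}

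The remaining differences are cosmetic. Your Lindeberg-type truncation replaces the paper's union bound for the maximal conditions, and your complex coefficients $c_a$ play the role of the paper's rotations $e^{i\phi}$ in getting properness. Two small fixes: the factor $R^{-d}$ in your bound for $\max_k|X_{R,k}|^2$ should be dropped, since $X_{R,k}$ is already normalized. And the uniform bound $\sup_R\E\|M_R\|^2<\infty$ needed in \eqref{eq:hilbert-tightness-criterion} comes from the same identity with $P_n=0$.
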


\begin{proof}
Convergence as $R\to\infty$ can be checked along an arbitrary sequence
$R_n\to\infty$. Fix such a sequence when applying McLeish's array theorem
and suppress the index $n$ from the notation. We first prove convergence of
continuous real linear functionals on $H^m$. Every such functional has the form
$x\mapsto\Re\sum_{a=1}^m\langle x_a,u_a\rangle_H$ for suitable
$u_1,\ldots,u_m\in H$. Put $d_a:=\langle D,u_a\rangle_H$ and consider
\begin{equation}
\label{eq:cramer-wold-real-projection}
    L_R:=\Re\sum_{a=1}^m\langle M_R(h_a,\theta),u_a\rangle_H.
\end{equation}
After lexicographically ordering the finitely many active indices as
$k_1<_{\rm lex}\cdots<_{\rm lex}k_{m_R}$, write
$L_R=\sum_{j=1}^{m_R}X_{R,k_j}$ with
\begin{equation}
\label{eq:martingale-array-increments}
    X_{R,k}
    :=R^{-d/2}\Re z_{R,k},
    \qquad
    z_{R,k}:=e^{-2\pi i\langle k,\theta\rangle}
       \sum_{a=1}^m h_a(k/R)U^kd_a.
\end{equation}
Since $D\in\mathscr K=K_0\widehat\otimes H$, contraction against
$u_a\in H$ maps $\mathscr K$ into $K_0$; hence each
$d_a=\langle D,u_a\rangle_H$ belongs to $K_0$. Define
$\mathscr F_{R,0}:=\mathcal A_{k_1-e_d}$ and
$\mathscr F_{R,j}:=\mathcal A_{k_j}$ for $1\leq j\leq m_R$. By
\eqref{eq:strict-lex-past},
$\E[U^{k_j}d_a\mid\mathcal A_{k_j-e_d}]=0$, while
$\mathscr F_{R,j-1}\subset\mathcal A_{k_j-e_d}$. Hence
$(X_{R,k_j})_{j=1}^{m_R}$ is a martingale-difference row for
$(\mathscr F_{R,j})_{j=0}^{m_R}$.

The maximal conditions in Lemma~\ref{lem:mcleish} require only square
integrability. All active indices lie in a set of cardinality $O(R^d)$, and
for a fixed constant $C$,
\begin{equation}
\label{eq:martingale-max-domination}
    \max_k|X_{R,k}|
    \leq CR^{-d/2}\sum_{a=1}^m\max_{k}|U^kd_a|.
\end{equation}
For every $a$ and $c>0$, stationarity gives
\begin{equation}
\label{eq:martingale-max-tail}
    R^d\Pp\bigl(|d_a|>cR^{d/2}\bigr)
    \leq c^{-2}\E\!
      \left[|d_a|^2\ind_{\{|d_a|>cR^{d/2}\}}\right]
    \longrightarrow0.
\end{equation}
A union bound over the $O(R^d)$ active indices and the finitely many $a$,
together with \eqref{eq:martingale-max-tail}, gives
\eqref{eq:mcleish-max-prob}. Also
\begin{equation}
\label{eq:martingale-max-L2-bound}
    \E\max_k|X_{R,k}|^2
    \leq \E\sum_k|X_{R,k}|^2
    \leq C'R^{-d}\sum_{k,a}|h_a(k/R)|^2\E|d_a|^2,
\end{equation}
which is bounded uniformly in $R$. Hence
\eqref{eq:mcleish-max-L2} holds as well.

It remains to identify the sum of squares. The identity
$(\Re z)^2=(|z|^2+\Re z^2)/2$ gives
\begin{equation}
\label{eq:martingale-square-split}
    \sum_kX_{R,k}^2
    =\frac12R^{-d}\sum_k|z_{R,k}|^2
     +\frac12\Re\!\left(R^{-d}\sum_kz_{R,k}^2\right).
\end{equation}
Expanding the first term and applying
Lemma~\ref{lem:weighted-oscillatory-ergodic} with $\alpha=0$ gives
\begin{equation}
\label{eq:martingale-square-main-limit}
    R^{-d}\sum_k|z_{R,k}|^2
    \longrightarrow
    Q:=\sum_{a,b=1}^m
      \left(\int h_a\overline{h_b}\right)
      \E[d_a\overline{d_b}]
\end{equation}
in probability. Here $Q\geq0$, since it can also be written as
$\int\E|\sum_a h_a(x)d_a|^2\,dx$. For the second term,
\begin{equation}
\label{eq:martingale-square-oscillatory-term}
    R^{-d}\sum_kz_{R,k}^2
    =R^{-d}\sum_{k,a,b}
      h_a(k/R)h_b(k/R)e^{-4\pi i\langle k,\theta\rangle}
      U^k(d_ad_b),
\end{equation}
which converges to $0$ in $L^1$ by
Lemma~\ref{lem:weighted-oscillatory-ergodic}, since $2\theta\neq0$. Thus
\begin{equation}
\label{eq:martingale-realized-square-limit}
    \sum_kX_{R,k}^2\longrightarrow Q/2
    \qquad\text{in probability}.
\end{equation}
If $Q>0$, Lemma~\ref{lem:mcleish} gives
$L_R\dto N(0,Q/2)$. If $Q=0$, orthogonality of the martingale differences and
the same Riemann-sum calculation show that $\E|L_R|^2\to0$, so the conclusion
still holds with the degenerate normal law. Thus every continuous real linear
functional has the Gaussian limit prescribed by
\eqref{eq:innovation-fourier-covariance}. The same calculation applied to
$\Re(e^{i\phi}W_R)$, where
$W_R=\sum_a\langle M_R(h_a,\theta),u_a\rangle_H$, gives limiting variance
$Q/2$ for every $\phi$. Hence the limiting complex Gaussian variable $W$
satisfies $\E[W^2]=0$. Varying the vectors $u_a$ and polarizing gives
\eqref{eq:innovation-fourier-properness}.

It remains to verify tightness in $H^m$. Let $P_n$ be increasing finite-rank
orthogonal projections on $H$ with $P_n\to I$ strongly. Orthogonality of the translates
$U^k\mathscr K$ gives, for each $a$,
\begin{equation}
\label{eq:hilbert-tightness-tail}
    \E\|(I-P_n)M_R(h_a,\theta)\|_H^2
    =R^{-d}\sum_k|h_a(k/R)|^2
      \E\|(I-P_n)D\|_H^2.
\end{equation}
The Riemann sums are bounded uniformly in $R$, while the last expectation
tends to zero with $n$. The same orthogonality calculation, without
$I-P_n$, gives a uniform second-moment bound for every coordinate
$M_R(h_a,\theta)$. Thus \eqref{eq:hilbert-tightness-criterion}, applied with
the coordinatewise projections on $H^m$, gives tightness in $H^m$.
The weak-convergence criterion in
Subsection~\ref{subsec:hilbert-gaussian-weak-convergence} now upgrades the
convergence of all continuous real linear functionals to
\eqref{eq:innovation-fourier-convergence}.
\end{proof}

\begin{proof}[Proof of Theorem~\ref{thm:weighted-lattice-fourier-clt}]
Fix $\theta\in E_Y$ with $2\theta\neq0$. Proposition
\ref{prop:frequency-martingale-approximation} gives, simultaneously for the
finitely many functions,
\begin{equation}
\label{eq:joint-martingale-approximation}
    \max_{1\leq a\leq m}
    \|V_R(h_a,\theta)-V_R^D(h_a,\theta)\|_{L^2(H)}
    \longrightarrow0.
\end{equation}
Apply Proposition~\ref{prop:innovation-fourier-clt} with $D=D_\theta$.
Equation~\eqref{eq:operator-spectral-density} identifies its covariance with
\eqref{eq:lattice-limit-covariance}, and
\eqref{eq:joint-martingale-approximation} transfers the limit from the
martingale approximation to the original variables.
\end{proof}

\subsection{Several frequencies}
\label{subsec:lattice-theorem-joint-frequencies}

\begin{corollary}[Several frequencies]
\label{thm:lattice-several-frequencies}
Let $\theta_1,\ldots,\theta_q\in E_Y$ satisfy
\begin{equation}
\label{eq:joint-frequency-nonresonance}
    2\theta_a\neq0,
    \qquad
    \theta_a\neq\theta_b,
    \qquad
    \theta_a+\theta_b\neq0
    \quad (a\neq b).
\end{equation}
For each $a$, let
$h_{a,1},\ldots,h_{a,m_a}\in C_c^\infty(\bR^d;\bC)$. Then
\begin{equation}
\label{eq:lattice-several-frequencies-convergence}
    \bigl(V_R(h_{a,j},\theta_a)\bigr)_{a,j}
    \dto
    \bigl(G_{\theta_a}(h_{a,j})\bigr)_{a,j}
\end{equation}
in the finite product $\prod_{a=1}^q H^{m_a}$. The joint limit is complex Gaussian with mean zero. For each fixed $a$ its
covariance is given by
\eqref{eq:lattice-limit-covariance} with $\theta=\theta_a$. If $a\neq b$,
then for all $u,v\in H$ and all relevant $j,\ell$,
\begin{align}
\label{eq:lattice-distinct-frequency-covariance-zero}
    \E\!\left[
      \langle G_{\theta_a}(h_{a,j}),u\rangle_H
      \overline{\langle G_{\theta_b}(h_{b,\ell}),v\rangle_H}
    \right]&=0,\\
\label{eq:lattice-distinct-frequency-nonconjugate-zero}
    \E\!\left[
      \langle G_{\theta_a}(h_{a,j}),u\rangle_H
      \langle G_{\theta_b}(h_{b,\ell}),v\rangle_H
    \right]&=0.
\end{align}
Consequently, the random vectors associated with distinct frequencies are
independent.
\end{corollary}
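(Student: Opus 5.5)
We follow the single-frequency proof: reduce to martingale approximants, then run a Cramér--Wold/McLeish argument for the combined martingale array, and finish with tightness. By Proposition~\ref{prop:frequency-martingale-approximation}, applied at each $\theta_a\in E_Y$ and each $h_{a,j}$, we may replace $V_R(h_{a,j},\theta_a)$ by $V_R^D(h_{a,j},\theta_a)$, built from $D_{\theta_a}\in\mathscr K$, at an $L^2(H)$ cost tending to zero. It therefore suffices to prove the joint convergence for the martingale statistics
\[
    M_R^{(a,j)}:=R^{-d/2}\sum_{k\in\bZ^d}h_{a,j}(k/R)e^{-2\pi i\langle k,\theta_a\rangle}U^kD_{\theta_a}.
\]
The target covariance comes out directly: the within-frequency block is as in Proposition~\ref{prop:innovation-fourier-clt}, and the cross-frequency blocks vanish.

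For a continuous real linear functional, fix vectors $u_{a,j}\in H$ and put $L_R:=\Re\sum_{a,j}\langle M_R^{(a,j)},u_{a,j}\rangle_H$. Write $L_R=\sum_kX_{R,k}$ with $X_{R,k}=R^{-d/2}\Re z_{R,k}$, where
\[
    z_{R,k}=\sum_a e^{-2\pi i\langle k,\theta_a\rangle}\sum_j h_{a,j}(k/R)\,U^kd_{a,j},
    \qquad
    d_{a,j}:=\langle D_{\theta_a},u_{a,j}\rangle_H\in K_0 .
\]
Since every $d_{a,j}\in K_0$, exactly as in the proof of Proposition~\ref{prop:innovation-fourier-clt} this is a martingale-difference row for the lexicographic filtration $\mathcal A_{k_j}$. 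The maximal conditions \eqref{eq:mcleish-max-prob} and \eqref{eq:mcleish-max-L2} follow verbatim from stationarity and square integrability, because only finitely many $d_{a,j}$ occur.

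The main step is the sum of squares, split by $(\Re z)^2=(|z|^2+\Re z^2)/2$.
\begin{itemize}
\item In $R^{-d}\sum_k|z_{R,k}|^2$, the pairs $(a,a)$ contribute the diagonal terms $\sum_{j,\ell}(\int h_{a,j}\overline{h_{a,\ell}})\E[d_{a,j}\overline{d_{a,\ell}}]$ by Lemma~\ref{lem:weighted-oscillatory-ergodic} with $\alpha=0$. A pair $a\neq b$ carries the phase $e^{-2\pi i\langle k,\theta_a-\theta_b\rangle}$; since $\theta_a\neq\theta_b$, the same lemma with $\alpha=\theta_b-\theta_a\neq0$, $w=h_{a,j}\overline{h_{b,\ell}}$ and $q=d_{a,j}\overline{d_{b,\ell}}\in L^1$ sends it to $0$ in $L^1$.
\item In $R^{-d}\sum_kz_{R,k}^2$ the phases are $e^{-2\pi i\langle k,\theta_a+\theta_b\rangle}$. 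These are nonresonant because $2\theta_a\neq0$ and $\theta_a+\theta_b\neq0$ for $a\neq b$, so every term tends to $0$.
\end{itemize}
Hence $\sum_kX_{R,k}^2\to Q/2$ in probability, with
\[
    Q=\sum_a\sum_{j,\ell}\Bigl(\int h_{a,j}\overline{h_{a,\ell}}\Bigr)\E\bigl[d_{a,j}\overline{d_{a,\ell}}\bigr].
\]
McLeish's lemma then gives $L_R\dto N(0,Q/2)$, and the case $Q=0$ is handled by the $L^2$ argument as before. The limiting variance $Q/2$ has no cross-frequency terms.

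To read off the joint law, rotate the $u_{a,j}$ by arbitrary phases $e^{i\phi_a}$, frequency by frequency. Every such rotation gives the same limiting variance, so the limit $W$ of the complex combination satisfies $\E W^2=0$. Polarizing in the $u_{a,j}$ then yields \eqref{eq:lattice-limit-covariance} within each frequency, and both \eqref{eq:lattice-distinct-frequency-covariance-zero} and \eqref{eq:lattice-distinct-frequency-nonconjugate-zero} across frequencies. Tightness in $\prod_aH^{m_a}$ holds coordinatewise by \eqref{eq:hilbert-tightness-tail}, and the weak-convergence criterion of Subsection~\ref{subsec:hilbert-gaussian-weak-convergence} upgrades the convergence of functionals to \eqref{eq:lattice-several-frequencies-convergence}. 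Finally, the limit is jointly Gaussian in real coordinates, and the blocks for distinct frequencies have vanishing real covariances because both complex covariances vanish. They are therefore independent.
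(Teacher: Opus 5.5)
Your proposal is correct and follows the paper's own proof. The paper likewise uses the martingale approximation at each $\theta_a$ and one McLeish array in which the cross-frequency phases $e^{-2\pi i\langle k,\theta_a-\theta_b\rangle}$ and $e^{-2\pi i\langle k,\theta_a+\theta_b\rangle}$ are removed by Lemma~\ref{lem:weighted-oscillatory-ergodic}, then coordinatewise tightness and independence from the vanishing cross-covariances; your frequency-by-frequency phase rotation just spells out the ``same phase calculations'' that the paper leaves implicit.
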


\begin{proof}
Use the martingale approximation at each $\theta_a$. For a continuous real
linear functional of the resulting finite product, the analogue of
$z_{R,k}$ in \eqref{eq:martingale-array-increments} is a finite sum of terms
with phases $e^{-2\pi i\langle k,\theta_a\rangle}$. In the
$|z_{R,k}|^2$ term of \eqref{eq:martingale-square-split}, a cross term between
$\theta_a$ and $\theta_b$ carries phase
$e^{-2\pi i\langle k,\theta_a-\theta_b\rangle}$ and vanishes by
Lemma~\ref{lem:weighted-oscillatory-ergodic} when $a\neq b$. In the
$z_{R,k}^2$ term, the cross phase is
$e^{-2\pi i\langle k,\theta_a+\theta_b\rangle}$, which vanishes under
\eqref{eq:joint-frequency-nonresonance}. McLeish's theorem therefore gives
the stated Gaussian limits for all continuous real linear functionals.

The tightness estimate \eqref{eq:hilbert-tightness-tail} applies to every
coordinate, hence to the finite product. We obtain
\eqref{eq:lattice-several-frequencies-convergence} from the Hilbert-space
criterion in Subsection~\ref{subsec:hilbert-gaussian-weak-convergence}. The
same phase calculations give
\eqref{eq:lattice-distinct-frequency-covariance-zero} and
\eqref{eq:lattice-distinct-frequency-nonconjugate-zero}. Joint Gaussianity
then implies independence of the random vectors associated with distinct
frequencies.
\end{proof}

 \section{From lattice fields to stationary random measures}
\label{sec:continuum}

Let $\eta$ be a stationary random measure on $\bR^d$ with local second
moments whose translation action is essentially free and CPE. By
Corollary~\ref{cor:cpe-bartlett-ac},
$d\sigma_\eta=s_\eta\,d\lambda_d$ and $s_\eta\lambda_d$ is translation
bounded.

We deduce the continuum Fourier CLT from the lattice theorem of
Section~\ref{sec:lattice-clt}. We divide $\bR^d$ into unit
cubes and record the centered random measure in each cube as a Hilbert-valued
$\bZ^d$-field. The lattice theorem then gives a Gaussian limit for a statistic
in which $h$ is evaluated at the lattice points. We identify its variance with
$s_\eta(\xi)$ and show that the difference from $Z_R(\xi;h)$ converges to zero
in probability.

\begin{theorem}[Empirical Fourier CLT]
\label{thm:smooth-continuum-fourier-clt}
There is a $\lambda_d$-conull set $E_\eta\subset\bR^d$ such that, for every
$\xi\in E_\eta$, every $m\geq1$, and every
$h_1,\ldots,h_m\in C_c^\infty(\bR^d;\bC)$,
\begin{equation}
\label{eq:smooth-continuum-joint-clt}
    \bigl(Z_R(\xi;h_1),\ldots,Z_R(\xi;h_m)\bigr)
    \dto \bigl(G_\xi(h_1),\ldots,G_\xi(h_m)\bigr)
    \qquad\text{in }\bC^m,
\end{equation}
where the vector on the right is complex Gaussian with mean zero and
\begin{align}
\label{eq:smooth-continuum-limit-covariance}
    \E\!\left[G_\xi(h_a)\overline{G_\xi(h_b)}\right]
    &=s_\eta(\xi)
      \int_{\bR^d}h_a\overline{h_b}\,d\lambda_d,\\
\label{eq:smooth-continuum-limit-nonconjugate}
    \E\!\left[G_\xi(h_a)G_\xi(h_b)\right]&=0.
\end{align}
In particular,
\begin{equation}
\label{eq:smooth-continuum-scalar-clt}
    Z_R(\xi;h)
    \dto \mathcal{CN}\bigl(0,s_\eta(\xi)\|h\|_2^2\bigr).
\end{equation}
\end{theorem}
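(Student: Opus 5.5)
Our approach is to reduce to the lattice theorem, Theorem~\ref{thm:weighted-lattice-fourier-clt}, applied to a single Hilbert-valued field that encodes the centered random measure on the unit cell. Let $C=[0,1)^d$. Fix $s>d/2$ and put $H:=H^{-s}(\bR^d)$, or a weighted variant on a neighbourhood of $C$. Define
\[
    Y(\omega):=\ind_C M_\omega\in H .
\]
Local second moments make $Y$ well defined: for $\phi$ in the unit ball of $H^s$ we have $|\langle \ind_C M_\omega,\phi\rangle|\le \|\phi\|_\infty(\omega(C)+\rho_\eta)$, so $\E\|Y\|_H^2<\infty$. We also have $\E Y=0$. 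The translated field $U^kY$ is, up to shifting by $k$, the centered restriction of $M$ to $k+C$. By Proposition~\ref{prop:dooley-golodets-lattice-cpe} the lattice subaction is CPE, so Section~\ref{sec:lattice-clt} applies.

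\emph{Frozen statistic.} For $\xi\in\bR^d$ and $\theta=[\xi]$ define the functional $\ell_\xi\in H^*$ by
\[
    \ell_\xi(\mu):=\mu\bigl(\chi_\xi\cdot\psi\bigr),
\]
where $\psi\in C_c^\infty$ equals $1$ near $\overline C$. Because $\ind_C$ already cuts the measure to the cell, this is just the pairing against $\chi_\xi$ on $C$; boundary issues are harmless since $\omega(\partial C)=0$ almost surely. Then
\[
    \ell_\xi\bigl(V_R(h,\theta)\bigr)
    =R^{-d/2}\sum_k h(k/R)\,M\bigl(\ind_{k+C}\chi_\xi\bigr),
\]
with the sign conventions for $U^k$ and the phase $e^{-2\pi i\langle k,\theta\rangle}$ checked against $\chi_\xi(x+k)$; the phases combine correctly because $\xi-\theta\in\bZ^d$. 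This is the statistic in which $h((u-k)/R)$ has been frozen to a single value on each cell. Proposition~\ref{prop:cell-freezing} gives
\[
    Z_R(\xi;h)-\ell_\xi\bigl(V_R(h,\theta)\bigr)\to0
    \quad\text{in probability},
\]
up to the harmless replacement of $h(k/R)$ by $h(-k/R)$ matching the action convention. If a proof is needed, it is an $L^2$ bound through the Bartlett identity of Lemma~\ref{lem:bartlett-indicators}. The difference has spectral weight $R^{-d}\bigl|\sum_k(\text{difference of weights})\,\widehat{\ind_C}\ast\cdots\bigr|^2$. Its coefficient differences are $O(1/R)$, and translation boundedness of $s_\eta\lambda_d$ controls the integral, so the mean square is $O(R^{-2})$ times a Riemann sum and tends to $0$ for every $\xi$.

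\emph{Gaussian limit and choice of $E_\eta$.} By Theorem~\ref{thm:weighted-lattice-fourier-clt}, for $\theta\in E_Y$ with $2\theta\neq0$ the vector $(V_R(h_a,\theta))_a$ converges in $H^m$ to the proper Gaussian $G_\theta$. The functional $\ell_\xi$ is continuous, so the continuous mapping theorem gives a proper complex Gaussian limit for $(\ell_\xi V_R(h_a,\theta))_a$. Its covariance is
\[
    \Bigl(\int h_a\overline{h_b}\Bigr)\langle\mathcal F_Y(\theta)\bar\ell_\xi,\bar\ell_\xi\rangle ,
\]
where we write the functional via a Riesz vector $u_\xi$. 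Properness follows from \eqref{eq:lattice-limit-properness}. The set $\{\xi:[\xi]\in E_Y,\ 2[\xi]\ne0\}$ is conull in $\bR^d$ and does not depend on the $h_a$, which yields uniformity in $m$ and $h_1,\dots,h_m$.

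\emph{Main obstacle: identifying the variance as $s_\eta(\xi)$ on one conull set.} This is the content of Proposition~\ref{prop:cell-periodization-dealiasing}. We plan to compute the lattice cross-spectral density of the scalar field $\langle Y,u_\xi\rangle=M(\ind_C\chi_\xi)$ in two ways. By Lemma~\ref{lem:bartlett-indicators} and \eqref{eq:periodization-scalar-spectral-measure}, its lattice spectral density at $t$ is
\[
    \sum_{n\in\bZ^d}\bigl|\widehat{\ind}_C(t+n-\xi)\bigr|^2\,s_\eta(t+n).
\]
At $t=\theta$ we have $\widehat{\ind}_C(m)=\delta_{m,0}$ for $m\in\bZ^d$, so only the term $n$ with $\theta+n=\xi$ survives, and it gives $s_\eta(\xi)$.

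The difficulty is that the spectral density of this scalar field is only determined almost everywhere in $t$, while $\xi$ is tied to $t$. We would resolve this with the operator density $\mathcal F_Y(t)$. Its matrix coefficients against the fixed countable family $u_{\theta+n}$, $n\in\bZ^d$, are, for almost every $t$, given by the explicit periodized formula
\[
    \sum_{j}\widehat{\ind}_C(t+j-\theta-n)\,
      \overline{\widehat{\ind}_C(t+j-\theta-n')}\,s_\eta(t+j),
\]
understood as a function of $(t,\theta)$. The map $\xi\mapsto u_\xi$ is continuous into $H$ (smooth dependence of $\chi_\xi\psi$ in $H^s$). Hence $\langle\mathcal F_Y(t)u_{\xi'},u_{\xi'}\rangle$ is jointly controlled, and we can work with the jointly measurable function
\[
    F(t,\xi'):=\langle\mathcal F_Y(t)u_{\xi'},u_{\xi'}\rangle
\]
compared against the explicit series $G(t,\xi')$. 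By Fubini, $F=G$ for almost every $t$ and all $\xi'$ in a countable dense set, and hence, by continuity in $\xi'$, for all $\xi'$. Taking $\xi'=\xi$ with $t=[\xi]$ then requires $[\xi]$ to avoid a null set, combined with Lebesgue points of $s_\eta$ at the points $\xi+n$ so that the series evaluates pointwise. Intersecting these countably many null conditions defines $E_\eta$. We expect this step to be the delicate one, since care is needed to make $F(t,\cdot)$ continuous at fixed good $t$ and to ensure that the representative $D_\theta$ at $L^2$-Lebesgue points agrees with these formulas. We would handle the latter by taking Lebesgue points of $t\mapsto G(t,\xi')$ uniformly via the Lebesgue-point property of $\Gamma_Y$.
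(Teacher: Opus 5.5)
Your architecture is the paper's: a unit-cell field in a negative Sobolev space, the lattice CLT at $\theta=[\xi]$ pushed forward by $\ell_\xi$, and the variance identified through $\widehat{\ind}_C(m)=\delta_{m,0}$. The exceptional set is chosen independently of $\xi$ via a countable dense set of frequencies plus continuity in $\xi$, which is exactly the proof of Proposition~\ref{prop:cell-periodization-dealiasing}.

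Two of your worries in that step are unnecessary. First, $\xi\mapsto\E|\ell_\xi(D_\theta)|^2$ is continuous simply because $D_\theta$ is a fixed $L^2$ element and $\xi\mapsto u_\xi$ is continuous. Second, at $\xi=\theta+q$ the series collapses to the single term $s_\eta(\theta+q)$ of the fixed representative, so you need neither Lebesgue points at $\xi+n$ nor extra regularity of $D_\theta$. What you leave implicit is continuity of the \emph{series} in $\xi$. The paper gets it from the Weierstrass $M$-test, using $\sum_m w_m s_\eta(\theta+m)<\infty$ for a.e. $\theta$ with $w_m=\prod_j(1+|m_j|)^{-2}$; this follows from translation boundedness after integrating over $\bT^d$.

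\textbf{The gap is your proof of cell freezing.} Put $F_R:=\chi_\xi\sum_k\ind_{C-k}\,[h(\cdot/R)-h(-k/R)]$, up to your sign convention. The mean square of the difference is $R^{-d}\int|\hat F_R|^2s_\eta\,d\lambda_d$.
\begin{itemize}
\item Your $O(R^{-2})$ rests on $\int|\hat F_R|^2s_\eta\lesssim\|F_R\|_2^2=O(R^{d-2})$. That needs $s_\eta\in L^\infty$; translation boundedness only controls integrals of $s_\eta$ over unit balls.
\item $|\hat F_R|$ is of order $R^{d-1}$ on a set of measure $\asymp R^{-d}$ within $O(1/R)$ of $\xi$, with damped copies near $\xi+n$. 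So translation boundedness alone gives only $O(R^{d-2})$ for the normalized mean square.
\item For $d\geq3$ this genuinely fails: a translation-bounded density behaving like $|\lambda-\xi|^{-a}$ near $\xi$, with $2<a<d$, makes the mean square grow like $R^{a-2}$. So ``for every $\xi$'' is false in general.
\item Even on a good set, your route would need control of $s_\eta$ at scale $1/R$ at all aliases $\xi+\bZ^d$ simultaneously.
\end{itemize}

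The paper needs no such spectral estimate. It Taylor-expands $h((u-k)/R)$ about $-k/R$ to order $r\geq s$ on each cell.
\begin{itemize}
\item For $|\alpha|\geq1$, the order-$\alpha$ term is $R^{-|\alpha|}\ell_{\xi,\alpha}(V_R(h_\alpha,\theta))/\alpha!$, with $h_\alpha(x)=(\partial^\alpha h)(-x)$ and $\ell_{\xi,\alpha}(z)=\langle z,u^\alpha\chi_\xi\rangle$. These lattice sums are tight when $\theta=[\xi]\in E_Y$ and $2\theta\neq0$, so the term tends to zero in probability. The $L^2$-Lebesgue-point property of the Hilbert-valued $\Gamma_Y$ is what handles all aliases at once.
\item The Taylor remainder is $O(R^{-(r+1)})$ in $H^s(C)$ uniformly over the $O(R^d)$ relevant cells. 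The crude Minkowski bound then gives $O(R^{d/2-r-1})\to0$.
\end{itemize}
Freezing is therefore proved only for $[\xi]\in E_Y$ with $2[\xi]\neq0$. That costs nothing, since $E_\eta$ is restricted to such $\xi$ anyway. Citing Proposition~\ref{prop:cell-freezing} closes your argument; the $L^2$/Bartlett proof you sketch for it does not.
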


The class $C_c^\infty$ is used here because it supplies both the Fourier decay
needed in the lattice approximation and the derivative bounds needed in the
unit-cell Taylor expansion. We do not attempt to optimize the test-function
class.

\subsection{The unit-cell field}
\label{subsec:cell-field}

Let $C=(0,1)^d$, choose an integer $s>d/2$, and put
$\mathscr H_s:=H^s(C;\bC)^*$, the continuous complex-linear dual, equipped
with its standard Hilbert-space structure. The pairing
$\langle z,\phi\rangle:=z(\phi)$ between $\mathscr H_s$ and
$H^s(C;\bC)$ is complex-bilinear; it is distinct from the sesquilinear
Hilbert inner product $\langle\cdot,\cdot\rangle_{\mathscr H_s}$, which is
linear in the first variable. Since
$H^s(C)\hookrightarrow C(\overline C)$, every finite signed measure on $C$
defines an element of $\mathscr H_s$. Define
\begin{equation}
\label{eq:cell-field-definition}
    \langle Y(\omega),\phi\rangle
    :=\int_C\phi(u)\,dM_\omega(u),
    \qquad \phi\in H^s(C).
\end{equation}
Thus $Y$ records the centered random measure in the unit cube.

\begin{lemma}
\label{lem:cell-field-L2}
The random element $Y$ belongs to $L^2_0(\eta;\mathscr H_s)$ and, for
$k\in\bZ^d$,
\begin{equation}
\label{eq:translated-cell-field}
    \langle U^kY,\phi\rangle
    =\int_{C-k}\phi(x+k)\,dM(x).
\end{equation}
Moreover, for $\eta$-almost every $\omega$,
$M_\omega(\partial(C-k))=0$ for every $k\in\bZ^d$. For such $\omega$, the
measures $M_\omega|_{C-k}$ determine $M_\omega$.
\end{lemma}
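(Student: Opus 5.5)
We split the argument into measurability and second moments, centering, the translation formula, and the boundary statement.

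\emph{Measurability and second moments.} For each fixed $\phi\in H^s(C)$, the map $\omega\mapsto\langle Y(\omega),\phi\rangle$ is measurable: it is $\int_C\phi\,d\omega-\rho_\eta\int_C\phi\,d\lambda_d$, and $\phi$ is bounded and continuous on $\overline C$. Since $\mathscr H_s$ is separable, Pettis' theorem then gives strong measurability of $Y$. Let $c_s$ denote the norm of the Sobolev embedding $H^s(C)\hookrightarrow C(\overline C)$, valid because $s>d/2$ and $C$ is a Lipschitz domain. For every finite signed measure $\nu$ on $C$,
\[
    \|\nu\|_{\mathscr H_s}\leq c_s|\nu|(C).
\]
Applying this to $M_\omega|_C$ gives
\[
    \|Y(\omega)\|_{\mathscr H_s}\leq c_s\bigl(\omega(C)+\rho_\eta\bigr),
\]
which is square integrable by local second moments. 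Hence $Y\in L^2(\eta;\mathscr H_s)$.

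\emph{Centering.} For each $\phi$, the intensity identity \eqref{eq:intensity-definition} gives $\E\langle Y,\phi\rangle=0$. It extends from $C_c$ to bounded Borel functions on $C$ by monotone class. The Bochner integral commutes with the continuous functionals $z\mapsto z(\phi)$, and these separate points of $\mathscr H_s$, so $\E Y=0$.

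\emph{Translation formula.} With $U^kY(\omega)=Y((-k).\omega)$ and $((-k).\omega)(f)=\omega(f(\cdot+k))$, we get
\[
    \int_C\phi\,d\bigl((-k).\omega\bigr)=\int_{C-k}\phi(x+k)\,d\omega(x).
\]
The Lebesgue part transforms the same way, by translation invariance of $\lambda_d$. This yields \eqref{eq:translated-cell-field}; the only care needed is a monotone-class extension from continuous to bounded Borel integrands.

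\emph{Boundary statement.} The set $\partial(C-k)$ is Lebesgue-null. By \eqref{eq:intensity-definition}, again extended to Borel sets, $\E\,\omega(\partial(C-k))=0$, so $\omega(\partial(C-k))=0$ almost surely. Consequently $M_\omega(\partial(C-k))=0$ almost surely, and a countable union over $k\in\bZ^d$ gives a single full-measure set. On that set the open cubes $C-k$ cover $\bR^d$ up to the union of the boundaries, which is $|M_\omega|$-null. Hence for bounded Borel $A$,
\[
    M_\omega(A)=\sum_k M_\omega|_{C-k}(A),
\]
a finite sum, and the restrictions determine $M_\omega$.

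The only mildly delicate point is the Hilbert-space bookkeeping: the Bochner measurability of $Y$ and the identification of $\E Y$ through the bilinear pairing. Everything else is routine.
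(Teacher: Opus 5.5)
Your proof is correct and follows the paper's argument: Pettis measurability from the scalar pairings, the Sobolev-embedding bound $\|Y\|_{\mathscr H_s}\leq c_s(\omega(C)+\rho_\eta)$, centering via the intensity identity, the Koopman convention for the translation formula, and $\E_\eta\,\omega(\partial(C-k))=\rho_\eta\lambda_d(\partial C)=0$ together with a countable union over $k$. You also spell out why the restrictions $M_\omega|_{C-k}$ determine $M_\omega$: the union of the cube boundaries is $|M_\omega|$-null, a point the paper leaves implicit.
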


\begin{proof}
For each $\phi\in H^s(C)$, the scalar map
$\omega\mapsto\langle Y(\omega),\phi\rangle$ is measurable. Since
$\mathscr H_s$ is a separable Hilbert space, Pettis measurability therefore
makes $Y$ a strongly measurable $\mathscr H_s$-valued map. Sobolev embedding
gives $\|Y\|_{\mathscr H_s}\leq c_s(\omega(C)+\rho_\eta)$, so local second moments
imply $Y\in L^2(\eta;\mathscr H_s)$. Stationarity gives $\E_\eta Y=0$, and
\eqref{eq:translated-cell-field} follows from the Koopman convention. Finally,
for every $k\in\bZ^d$,
\[
    \E_\eta\omega(\partial(C-k))
    =\rho_\eta\lambda_d(\partial C)=0.
\]
Hence $\omega(\partial(C-k))=0$ almost surely for each $k$. Since $\bZ^d$ is
countable, this holds for all $k$ on one set of full $\eta$-measure. The same
statement holds for $M_\omega$, because $\lambda_d(\partial C)=0$.
\end{proof}

Apply Section~\ref{sec:lattice-clt} to $Y$. We keep the notation $E_Y$ and
$D_\theta$ from Subsection~\ref{subsec:lattice-spectral-coordinates}.

\subsection{The variance at a continuum frequency}
\label{subsec:cell-dealiasing}

Fix a representative of $s_\eta$ for the pointwise formulas in this subsection and in the proof of Theorem~\ref{thm:smooth-continuum-fourier-clt}.

For $\xi\in\bR^d$, define the bounded linear functional
\begin{equation}
\label{eq:cell-functional-definition}
    \ell_\xi(z):=\langle z,\chi_\xi|_C\rangle,
    \qquad z\in\mathscr H_s.
\end{equation}
By the Riesz representation theorem there is a unique
$u_\xi\in\mathscr H_s$ such that
$\ell_\xi(z)=\langle z,u_\xi\rangle_{\mathscr H_s}$; thus the scalar
projections below use the Hilbert-space convention of Section~\ref{sec:lattice-clt}.
Put $\beta:=\widehat{\ind}_C$. Then
$\beta(0)=1$ and $\beta(m)=0$ for every
$m\in\bZ^d\setminus\{0\}$.

The next proposition identifies the variance when the lattice frequency is the
class of $\xi$ modulo $\bZ^d$. Its exceptional set is chosen independently of
$\xi$.

\begin{proposition}[Variance at a continuum frequency]
\label{prop:cell-periodization-dealiasing}
There is a set $E_Y^{\mathrm{cell}}\subset E_Y$ of full Lebesgue measure such that, for every
$\theta\in E_Y^{\mathrm{cell}}$ and every $\xi\in\bR^d$,
\begin{equation}
\label{eq:cell-periodization-formula}
    \E_\eta|\ell_\xi(D_\theta)|^2
    =\sum_{m\in\bZ^d}
       |\beta(\theta+m-\xi)|^2s_\eta(\theta+m).
\end{equation}
Consequently, if $\xi=\theta+q$ for some $q\in\bZ^d$, then
\begin{equation}
\label{eq:exact-dealiasing}
    \E_\eta|\ell_\xi(D_\theta)|^2=s_\eta(\xi).
\end{equation}
\end{proposition}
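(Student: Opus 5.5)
The plan is to compute the lattice cross-spectral density of the scalar field $\ell_\xi(U^kY)$ in two ways and compare them. The first computation uses the martingale-difference coordinates of Section~\ref{sec:lattice-clt}, and the second uses the Bartlett--Koopman correspondence. The delicate point is the quantifier order: the conull set of $\theta$ must not depend on $\xi$. I will therefore first prove the identity for a countable dense set of $\xi$ and then extend it by continuity in $\xi$.

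\emph{Step 1: spectral identity for a fixed $\xi$.} Fix $\xi$. By \eqref{eq:translated-cell-field}, $\ell_\xi(U^kY)=M(\ind_{C-k}\chi_\xi(\cdot+k))=\chi_\xi(k)\,M(\ind_{C-k}\chi_\xi)$, and $M(\ind_{C-k}\chi_\xi)=U^k M(\ind_C\chi_\xi)$. Thus the scalar lattice field $k\mapsto\ell_\xi(U^kY)=\langle U^kY,u_\xi\rangle_{\mathscr H_s}$ equals $e^{2\pi i\langle k,\xi\rangle}U^kF_\xi$, where $F_\xi:=M(\ind_C\chi_\xi)$.
\begin{itemize}
\item By \eqref{eq:bartlett-indicator-spectral-measure}, the $\bR^d$-spectral measure of $F_\xi$ is $|\beta(\lambda-\xi)|^2s_\eta(\lambda)\,d\lambda$.
\item By \eqref{eq:periodization-scalar-spectral-measure}, its lattice spectral density at $t\in\bT^d$ is $\sum_m|\beta(t+m-\xi)|^2s_\eta(t+m)$.
\item The modulating phase $e^{2\pi i\langle k,\xi\rangle}$ shifts frequencies by $[\xi]$. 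Accounting for it, the lattice spectral density of $k\mapsto\langle U^kY,u_\xi\rangle$ at $\theta$ is again the right-hand side of \eqref{eq:cell-periodization-formula}. The shift by $[\xi]$ cancels against the $-\xi$ inside $\beta$, because $\beta(\theta+m-\xi)$ is written in terms of the absolute frequency $\theta+m$.
\item Comparing with \eqref{eq:scalar-cross-spectral-parseval} for $u=v=u_\xi$ shows that $\E|\langle D_\theta,u_\xi\rangle|^2=\E|\ell_\xi(D_\theta)|^2$ equals this density for a.e.\ $\theta$.
\end{itemize}
I will check the sign convention of the shift carefully against \eqref{eq:lattice-spectral-measure} and \eqref{eq:koopman-convention}. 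If the bookkeeping produces $\theta-[\xi]$ instead, I will recast it, since the formula is stated with $\theta+m$ as the frequency at which $s_\eta$ is sampled.

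\emph{Step 2: uniformity in $\xi$.} This is the main obstacle. Let $\Xi\subset\bR^d$ be countable and dense, and intersect the conull sets from Step 1 over $\xi\in\Xi$. I will also intersect with the conull set of $\theta$ on which $\sum_m s_\eta(\theta+m)(1+|m|)^{-2}<\infty$. This set is conull because $\int_{\bT^d}\sum_m s_\eta(\theta+m)(1+|m|)^{-2}\,d\theta<\infty$ by translation boundedness. Call the resulting set $E_Y^{\mathrm{cell}}$. Then both sides of \eqref{eq:cell-periodization-formula} are continuous in $\xi$ for fixed $\theta\in E_Y^{\mathrm{cell}}$:
\begin{itemize}
\item \emph{Left side.} The map $\xi\mapsto\chi_\xi|_C$ is continuous into $H^s(C)$, so $\xi\mapsto u_\xi$ is continuous in $\mathscr H_s$. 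Since $D_\theta\in L^2(\eta;\mathscr H_s)$, the quantity $\E|\langle D_\theta,u_\xi\rangle|^2$ is continuous in $\xi$.
\item \emph{Right side.} The bound $|\beta(\zeta)|\le\prod_j\min(1,(\pi|\zeta_j|)^{-1})$ does not give $(1+|m|)^{-2}$ decay in every direction when $d\ge2$. I will handle this by using the product structure. Alternatively, I will choose the auxiliary conull set via the finite integral $\int_{\bT^d}\sum_m\sup_{|\xi|\le N}|\beta(\theta+m-\xi)|^2s_\eta(\theta+m)\,d\theta<\infty$ for each $N$. This integral equals $\int_{\bR^d}\sup_{|\xi|\le N}|\beta(\lambda-\xi)|^2s_\eta(\lambda)\,d\lambda$, which is finite by translation boundedness because $\sup_{|\xi|\le N}|\beta(\cdot-\xi)|^2$ is comparable to $\prod_j(1+|\lambda_j|)^{-2}$, and this majorant is summable against a translation-bounded measure. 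Dominated convergence in $m$ then gives continuity in $\xi$ for $|\xi|\le N$.
\end{itemize}
Density of $\Xi$ then extends \eqref{eq:cell-periodization-formula} to all $\xi\in\bR^d$.

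\emph{Step 3: exact dealiasing.} If $\xi=\theta+q$ with $q\in\bZ^d$, then $\beta(\theta+m-\xi)=\beta(m-q)$. This vanishes unless $m=q$, and $\beta(0)=1$, so the sum collapses to $s_\eta(\theta+q)=s_\eta(\xi)$, which is \eqref{eq:exact-dealiasing}.
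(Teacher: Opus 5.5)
Your architecture is the paper's: a fixed-$\xi$ spectral identification, then a countable dense set of $\xi$, local uniform convergence of the series, continuity of $\xi\mapsto\chi_\xi|_C$ in $H^s(C)$, and the collapse $\beta(m)=\delta_{m,0}$. However, Step~1, which carries the content of the identity, is wrong as written.

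The claim $M(\ind_{C-k}\chi_\xi)=U^kF_\xi$ is false. By \eqref{eq:koopman-convention} one has $U^k\,\mathbb S f=\mathbb S(f(\cdot+k))$, hence
\[
    U^kF_\xi=M\bigl(\ind_C(\cdot+k)\chi_\xi(\cdot+k)\bigr)=\chi_\xi(k)\,M(\ind_{C-k}\chi_\xi).
\]
Combined with your correct first identity, this gives $\ell_\xi(U^kY)=U^kF_\xi$ with no phase. This is also immediate because $\ell_\xi$ is a fixed bounded functional applied pointwise to $Y$.

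The cancellation you invoke to dispose of the spurious phase does not exist. If the field really were $e^{2\pi i\langle k,\xi\rangle}U^kF_\xi$, its lattice density at $\theta$ would be that of $F_\xi$ at $\theta-[\xi]$, namely $\sum_m|\beta(\theta+m-2\xi)|^2s_\eta(\theta+m-\xi)$. The $-\xi$ inside $\beta$ records the modulation within the cell and cannot absorb a translation of the point where $s_\eta$ is sampled. Relabeling (your ``recast'') is not available either, because $\theta$ is pinned by $D_\theta=\Gamma_Y(\theta)$ and by the sign convention in \eqref{eq:scalar-cross-spectral-parseval}. With such a shift, Step~3 at $\xi=\theta+q$ would evaluate $s_\eta$ on the null set $\bZ^d$ instead of at $\xi$, so the dealiasing would fail.

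The correct Step~1 is the paper's. The field $\ell_\xi(U^kY)=U^kF_\xi$ has lattice density $\sum_m|\beta(\theta+m-\xi)|^2s_\eta(\theta+m)$ at $\theta$, by \eqref{eq:bartlett-indicator-spectral-measure} and \eqref{eq:periodization-scalar-spectral-measure}. Then \eqref{eq:scalar-cross-spectral-parseval} with $u=v=u_\xi$ identifies this density with $\E|\ell_\xi(D_\theta)|^2$ for almost every $\theta$.

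In Step~2, your first auxiliary set uses the weight $(1+|m|)^{-2}$, which is not summable over $\bZ^d$ for $d\geq2$. The integral you claim is finite already diverges for constant $s_\eta$ (the Poisson case). Your alternative, the product majorant $\prod_j(1+|\lambda_j|)^{-2}$ taken for each $N$ and intersected over countably many $N$, is the right one. It is exactly the paper's weight $w_m=\prod_j(1+|m_j|)^{-2}$ used with the Weierstrass $M$-test. With these two corrections your argument coincides with the paper's proof.
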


\begin{proof}
Fix $\xi$. By definition, $\ell_\xi(Y)=M(\ind_C\chi_\xi)$. By
\eqref{eq:bartlett-indicator-spectral-measure}, its spectral measure for the
$\bR^d$-action has density
\[
    |\beta(\lambda-\xi)|^2s_\eta(\lambda).
\]
Passing to the standard-lattice subaction sums this density over translates by
$\bZ^d$. Section~\ref{sec:lattice-clt} identifies the resulting lattice
spectral density at $\theta$ with $\E_\eta|\ell_\xi(D_\theta)|^2$. This proves
\eqref{eq:cell-periodization-formula} for almost every $\theta$ when $\xi$ is
fixed.

We now choose the exceptional set independently of $\xi$. The identity holds
simultaneously for $\xi$ in a fixed countable dense subset of $\bR^d$ outside
a null set of $\theta$'s. On every compact set $K\subset\bR^d$,
\begin{equation}
\label{eq:cell-fourier-product-decay}
    |\beta(\theta+m-\xi)|^2
    \leq C_K\prod_{j=1}^d(1+|m_j|)^{-2},
    \qquad \xi\in K.
\end{equation}
Let $w_m:=\prod_{j=1}^d(1+|m_j|)^{-2}$. Translation boundedness gives
\[
    \int_{\bT^d}\sum_{m\in\bZ^d}w_m s_\eta(\theta+m)\,d\theta
    <\infty.
\]
Hence, for almost every $\theta$,
$\sum_m w_m s_\eta(\theta+m)<\infty$. For such $\theta$,
\eqref{eq:cell-fourier-product-decay} and the Weierstrass $M$-test give local
uniform convergence in $\xi$ of the series on the right of
\eqref{eq:cell-periodization-formula}. The left side is also continuous in
$\xi$, because
$\xi\mapsto\chi_\xi|_C$ is continuous in $H^s(C)$. Thus the identity extends
from the dense subset to every $\xi$. Intersecting this conull set with $E_Y$
gives $E_Y^{\mathrm{cell}}$.

If $\xi=\theta+q$, then
$\beta(\theta+m-\xi)=\beta(m-q)$, which is $1$ for $m=q$ and $0$ otherwise.
Equation~\eqref{eq:exact-dealiasing} follows.
\end{proof}

\subsection{Approximation on unit cells}
\label{subsec:cell-freezing}

Fix $\xi\in\bR^d$ and let $\theta=[\xi]\in\bT^d$ be its class modulo
$\bZ^d$. If $\xi=\theta+q$ with $q\in\bZ^d$, then for $u\in C$ and
$k\in\bZ^d$,
\[
    \chi_\xi(u-k)=\chi_\xi(u)e^{-2\pi i\langle k,\xi\rangle}
    =\chi_\xi(u)e^{-2\pi i\langle k,\theta\rangle}.
\]
Lemma~\ref{lem:cell-field-L2} therefore gives
\begin{equation}
\label{eq:cell-decomposition-empirical-transform}
    Z_R(\xi;h)
    =R^{-d/2}\sum_{k\in\bZ^d}e^{-2\pi i\langle k,\theta\rangle}
      \left\langle
        U^kY,
        \chi_\xi(\cdot)h\bigl((\cdot-k)/R\bigr)
      \right\rangle.
\end{equation}
Write $\check h(x):=h(-x)$ and set
\begin{equation}
\label{eq:frozen-cell-statistic}
    \widetilde Z_R(\xi;h)
    :=\ell_\xi\bigl(V_R(\check h,\theta)\bigr)
    =R^{-d/2}\sum_{k\in\bZ^d}
       h(-k/R)e^{-2\pi i\langle k,\theta\rangle}
       \ell_\xi(U^kY).
\end{equation}
The following proposition shows that the variation of $h$ inside each unit
cube does not affect the limit.

\begin{proposition}[Approximation on unit cells]
\label{prop:cell-freezing}
If $\theta=[\xi]\in E_Y$ and $2\theta\neq0$, then, for every
$h\in C_c^\infty(\bR^d;\bC)$,
\begin{equation}
\label{eq:cell-freezing-conclusion}
    Z_R(\xi;h)-\widetilde Z_R(\xi;h)\pto0.
\end{equation}
The same conclusion holds simultaneously for any fixed finite family of such
functions.
\end{proposition}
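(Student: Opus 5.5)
The plan is to expand $h((u-k)/R)$ in a Taylor series about the lattice point $-k/R$ on each unit cell. Every term of positive order then becomes $R^{-|\alpha|}$ times a lattice Fourier statistic of the type in Section~\ref{sec:lattice-clt}, which stays bounded in $L^2$ at $\theta\in E_Y$, and the Taylor remainder is small by a crude bound. I expect this to give convergence in $L^2(\eta)$, which is stronger than \eqref{eq:cell-freezing-conclusion}, and I do not expect to need the condition $2\theta\neq0$. Since $L^2$ convergence for each $h$ separately implies joint convergence, the finite-family statement follows at once.

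\textbf{Taylor decomposition.} Fix an integer $N>d/2$ with $N\geq s$. For $u\in C$, Taylor's formula gives
\[
h\bigl((u-k)/R\bigr)=\sum_{|\alpha|<N}\frac{R^{-|\alpha|}}{\alpha!}(\partial^\alpha h)(-k/R)\,u^\alpha+r_{R,k}(u).
\]
For every multi-index $\beta$ with $|\beta|\leq s$ I expect $\sup_{u\in C}|\partial_u^\beta r_{R,k}(u)|\leq C_hR^{-N}$. Indeed, differentiating in $u$ either produces extra factors $R^{-1}$ or, via the integral form of the remainder, gives a remainder of order $N$ in $u/R$. Moreover, $r_{R,k}$ vanishes unless $k$ lies in a set of cardinality $O(R^d)$, because $h$ has compact support. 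Substituting into \eqref{eq:cell-decomposition-empirical-transform} and subtracting \eqref{eq:frozen-cell-statistic}, which is exactly the $\alpha=0$ term, gives
\[
Z_R-\widetilde Z_R=\sum_{0<|\alpha|<N}\frac{R^{-|\alpha|}}{\alpha!}\,\bigl\langle V_R(\check g_\alpha,\theta),\phi_\alpha\bigr\rangle+\mathcal E_R,
\]
with the following notation:
\begin{itemize}
\item $g_\alpha=\partial^\alpha h$ and $\check g_\alpha(x)=g_\alpha(-x)$;
\item $\phi_\alpha(u)=u^\alpha\chi_\xi(u)$, which lies in $H^s(C)$;
\item $\mathcal E_R=R^{-d/2}\sum_k e^{-2\pi i\langle k,\theta\rangle}\langle U^kY,\chi_\xi r_{R,k}\rangle$ is the remainder term.
\end{itemize}

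\textbf{Bound on the main terms.} Each pairing $\langle\cdot,\phi_\alpha\rangle$ is a bounded functional on $\mathscr H_s$. By the spectral isometry used in the proof of Proposition~\ref{prop:frequency-martingale-approximation}, its second moment satisfies
\[
\E\bigl|\langle V_R(\check g_\alpha,\theta),\phi_\alpha\bigr\rangle|^2\leq\|\phi_\alpha\|^2\int_{\bT^d}K_R^{\check g_\alpha}(t-\theta)\,\|\Gamma_Y(t)\|_{\mathscr K}^2\,dt.
\]
Since $\theta\in E_Y$ is an $L^2$-Lebesgue point of $\Gamma_Y$, it is also a Lebesgue point of $\|\Gamma_Y\|^2_{\mathscr K}$. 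Lemma~\ref{lem:smooth-lattice-approximate-identity}, applied to $q(t)=\|\Gamma_Y(\theta+t)\|^2-\|\Gamma_Y(\theta)\|^2$ together with the mass identity \eqref{eq:lattice-kernel-mass}, then shows that the integral converges to $\|g_\alpha\|_2^2\|\Gamma_Y(\theta)\|^2$. It is therefore bounded in $R$. After multiplication by $R^{-|\alpha|}$ with $|\alpha|\geq1$, each such term tends to $0$ in $L^2(\eta)$.

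\textbf{Bound on the remainder.} For $\mathcal E_R$ I will use Minkowski's inequality and stationarity:
\[
\|\mathcal E_R\|_{L^2}\leq R^{-d/2}\sum_k\|Y\|_{L^2(\mathscr H_s)}\,\|\chi_\xi r_{R,k}\|_{H^s(C)}\leq C\,R^{-d/2}\,R^d\,R^{-N}.
\]
This tends to $0$ because $N>d/2$.

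\textbf{Main obstacle.} The delicate step is the uniform $L^2$ bound on the finitely many main terms. A naive sup-norm bound on the kernel loses a factor $R^d$, and this must be avoided. The way around it is that the lattice spectral density of $\langle Y,\phi_\alpha\rangle$ is dominated by $\|\phi_\alpha\|^2\|\Gamma_Y(t)\|^2$ pointwise, and the single fixed frequency $\theta\in E_Y$ is a Lebesgue point of $\|\Gamma_Y\|^2$. One must check that the bound $|\langle\Gamma_Y(t),\phi\rangle|\leq\|\phi\|\,\|\Gamma_Y(t)\|$ is legitimate in $\mathscr K=K_0\widehat\otimes\mathscr H_s$ after contracting against the Riesz representative of the pairing. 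This holds by Cauchy--Schwarz in the $\mathscr H_s$ factor.
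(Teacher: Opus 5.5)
Your proposal is correct. It shares the Taylor expansion, the identification of the $\alpha=0$ term with $\widetilde Z_R(\xi;h)$, and the Minkowski bound $O(R^{d/2-N})$ on the remainder with the paper's proof; your $N$ plays the role of the paper's $r+1$. It differs in how the terms with $|\alpha|\geq1$ are controlled.

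The paper handles $\ell_{\xi,\alpha}(V_R(h_\alpha,\theta))$ by appealing to Theorem~\ref{thm:weighted-lattice-fourier-clt}. Convergence in distribution gives tightness, and the factor $R^{-|\alpha|}$ then sends these terms to zero in probability. This is the only reason the hypothesis $2\theta\neq0$ appears in the statement.

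You instead bound second moments directly through the spectral isometry,
$\E\|V_R(\check g_\alpha,\theta)\|_{\mathscr H_s}^2=\int_{\bT^d}K_R^{\check g_\alpha}(t-\theta)\|\Gamma_Y(t)\|_{\mathscr K}^2\,dt$.
You then use that an $L^2$-Lebesgue point of $\Gamma_Y$ is a Lebesgue point of $\|\Gamma_Y\|_{\mathscr K}^2$; this follows from $\bigl|\|a\|^2-\|b\|^2\bigr|\leq\|a-b\|^2+2\|b\|\,\|a-b\|$ and Cauchy--Schwarz on the local averages. Lemma~\ref{lem:smooth-lattice-approximate-identity} finishes the bound. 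This is the $L^2$ information already implicit in the paper's lattice CLT, namely Proposition~\ref{prop:frequency-martingale-approximation} plus the orthogonality computation for $V_R^D$, extracted without McLeish's theorem.

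You were right that the Lebesgue-point input is essential. The crude bound $K_R\leq CR^d$ only gives an $L^2$ size $O(R^{d/2-|\alpha|})$, which does not vanish when $|\alpha|=1$ and $d\geq2$.

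Your route buys a stronger statement: convergence in $L^2(\eta)$, valid at every $\theta\in E_Y$ with no restriction $2\theta\neq0$. The paper's route is shorter because the lattice CLT must be applied at the same frequency anyway in the proof of Theorem~\ref{thm:smooth-continuum-fourier-clt}. There $2\theta\neq0$ is already imposed, so nothing is lost downstream.
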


\begin{proof}
Choose an integer $r\geq s$. For $u\in C$, Taylor's formula at $-k/R$ gives
\begin{equation}
\label{eq:cell-taylor-expansion}
    h\bigl((u-k)/R\bigr)
    =\sum_{|\alpha|\leq r}
       \frac{u^\alpha}{\alpha!R^{|\alpha|}}
       (\partial^\alpha h)(-k/R)
      +\mathcal R_{R,k}(u).
\end{equation}
The term $\alpha=0$ gives \eqref{eq:frozen-cell-statistic}. If
$|\alpha|\geq1$, put
\[
    h_\alpha(x):=(\partial^\alpha h)(-x),
    \qquad
    \ell_{\xi,\alpha}(z):=\langle z,u^\alpha\chi_\xi(u)\rangle.
\]
The corresponding contribution to
\eqref{eq:cell-decomposition-empirical-transform} is
\[
    \frac{R^{-|\alpha|}}{\alpha!}
      \ell_{\xi,\alpha}\bigl(V_R(h_\alpha,\theta)\bigr).
\]
By Theorem~\ref{thm:weighted-lattice-fourier-clt}, the random variables
$\ell_{\xi,\alpha}(V_R(h_\alpha,\theta))$ are tight. Since
$|\alpha|\geq1$, multiplication by $R^{-|\alpha|}$ makes each of these terms
converge to zero in probability.

For the remainder, let $|\gamma|\leq s$. Differentiating
\eqref{eq:cell-taylor-expansion} in $u$ leaves the Taylor remainder, at order
$r-|\gamma|$, for
$R^{-|\gamma|}(\partial^\gamma h)((u-k)/R)$. Hence every such derivative of
$\mathcal R_{R,k}$ is $O(R^{-(r+1)})$. Multiplication by the fixed smooth
function $\chi_\xi$ preserves this order. Thus, for the fixed frequency $\xi$,
\[
    \|\chi_\xi\mathcal R_{R,k}\|_{H^s(C)}
    \leq C_{h,\xi}R^{-(r+1)}
\]
uniformly over the cells meeting the support of $h(\cdot/R)$. There are
$O(R^d)$ such cells. Using the uniform $L^2(\eta;\mathscr H_s)$ bound for
$U^kY$ and Minkowski's inequality therefore gives
\begin{equation}
\label{eq:cell-remainder-L2}
    \|\mathrm{Rem}_R\|_{L^2(\eta)}
    =O\bigl(R^{d/2-r-1}\bigr)=o(1).
\end{equation}
This proves \eqref{eq:cell-freezing-conclusion}. The argument applies to each
member of a fixed finite family.
\end{proof}

\begin{proof}[Proof of Theorem~\ref{thm:smooth-continuum-fourier-clt}]
Let $E_\eta$ be the set of $\xi\in\bR^d$ such that $\xi$ is a Lebesgue point of this representative,
$[\xi]\in E_Y^{\mathrm{cell}}$, and $2[\xi]\neq0$. By the Lebesgue
differentiation theorem and Proposition~\ref{prop:cell-periodization-dealiasing},
$E_\eta$ is $\lambda_d$-conull. The Lebesgue-point condition is not needed
for the smooth CLT itself; it is included so that the same set $E_\eta$ can
be used for the sharp-window result in Section~\ref{sec:scattering}.

Fix $\xi\in E_\eta$ and put $\theta=[\xi]$. Theorem
\ref{thm:weighted-lattice-fourier-clt}, applied to
$\check h_1,\ldots,\check h_m$, gives joint convergence of the
$\mathscr H_s$-valued lattice sums. Applying the bounded linear functional
$\ell_\xi$ gives a complex Gaussian vector with mean zero. By
\eqref{eq:exact-dealiasing}, its covariance is
\[
    s_\eta(\xi)
    \int_{\bR^d}h_a\overline{h_b}\,d\lambda_d,
\]
and \eqref{eq:lattice-limit-properness} gives
$\E[G_\xi(h_a)G_\xi(h_b)]=0$. Proposition~\ref{prop:cell-freezing} transfers
this convergence to the variables $Z_R(\xi;h_a)$.
\end{proof}

Although a representative of $s_\eta$ was fixed in constructing $E_\eta$,
the theorem does not depend on that choice. Two representatives agree off a
$\lambda_d$-null set; after removing that set, the corresponding good sets
have the same pointwise variance $s_\eta(\xi)$.

Our choice of $E_\eta$ excludes the frequencies for which $2[\xi]=0$ in
$\bT^d$; this is a null set. In particular, $0\notin E_\eta$. The exclusion
comes from the lattice CLT used above and does not assert that the continuum
CLT fails at the other excluded frequencies.

\subsection{Several frequencies}
\label{subsec:smooth-continuum-clt}

\begin{corollary}[Several frequencies]
\label{cor:continuum-several-frequencies}
For every $q\geq1$ there is a $\lambda_{dq}$-conull set
$E_\eta^{(q)}\subset(\bR^d)^q$ such that the following holds. If
$(\xi_1,\ldots,\xi_q)\in E_\eta^{(q)}$ and, for each $a$,
$h_{a,1},\ldots,h_{a,m_a}\in C_c^\infty(\bR^d;\bC)$, then
\begin{equation}
\label{eq:continuum-several-frequencies-convergence}
    \bigl(Z_R(\xi_a;h_{a,j})\bigr)_{a,j}
    \dto
    \bigl(G_{\xi_a}(h_{a,j})\bigr)_{a,j}
\end{equation}
in $\bC^{m_1+\cdots+m_q}$. For $a\neq b$, the two Gaussian vectors
$(G_{\xi_a}(h_{a,j}))_j$ and $(G_{\xi_b}(h_{b,j}))_j$ are independent.
\end{corollary}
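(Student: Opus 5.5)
We define $E_\eta^{(q)}$ as the set of $(\xi_1,\ldots,\xi_q)$ with each $\xi_a\in E_\eta$ (so $[\xi_a]\in E_Y^{\mathrm{cell}}$ and $2[\xi_a]\neq0$), and with the classes $\theta_a:=[\xi_a]$ satisfying the nonresonance conditions \eqref{eq:joint-frequency-nonresonance}: $\theta_a\neq\theta_b$ and $\theta_a+\theta_b\neq0$ for $a\neq b$. The first condition gives a conull product set, because $E_\eta$ is conull. The second removes the preimages of the finitely many sets $\{\xi_a-\xi_b\in\bZ^d\}$ and $\{\xi_a+\xi_b\in\bZ^d\}$. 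Each of these is a countable union of affine subspaces of codimension $d$ in $(\bR^d)^q$, hence $\lambda_{dq}$-null. So $E_\eta^{(q)}$ is $\lambda_{dq}$-conull.

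Fix a point of $E_\eta^{(q)}$ and the test functions. For each $a$, Proposition~\ref{prop:cell-freezing} gives $Z_R(\xi_a;h_{a,j})-\ell_{\xi_a}\bigl(V_R(\check h_{a,j},\theta_a)\bigr)\pto0$ for every $j$. By Slutsky, it therefore suffices to prove joint convergence of the frozen statistics. We apply Corollary~\ref{thm:lattice-several-frequencies} to the cell field $Y$, with frequencies $\theta_1,\ldots,\theta_q\in E_Y$ and weights $\check h_{a,j}$. This gives joint convergence in $\prod_a\mathscr H_s^{m_a}$ to a centered complex Gaussian element. Applying the continuous linear map that sends the $(a,j)$ coordinate $z$ to $\ell_{\xi_a}(z)=\langle z,u_{\xi_a}\rangle_{\mathscr H_s}$, the continuous mapping theorem yields joint convergence in $\bC^{m_1+\cdots+m_q}$ to a centered complex Gaussian vector.

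It remains to identify the limit. Within a single frequency $a$, \eqref{eq:lattice-limit-covariance} with $u=v=u_{\xi_a}$, together with \eqref{eq:exact-dealiasing}, gives covariance $s_\eta(\xi_a)\int h_{a,j}\overline{h_{a,\ell}}$. Properness follows from \eqref{eq:lattice-limit-properness}. This agrees with the law $(G_{\xi_a}(h_{a,j}))_j$ of Theorem~\ref{thm:smooth-continuum-fourier-clt}. Across distinct frequencies, \eqref{eq:lattice-distinct-frequency-covariance-zero} and \eqref{eq:lattice-distinct-frequency-nonconjugate-zero}, taken with $u=u_{\xi_a}$ and $v=u_{\xi_b}$, give vanishing of both the covariance and the pseudo-covariance. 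For a jointly Gaussian vector, vanishing of both cross moments means the real and imaginary parts are uncorrelated across blocks, so the blocks are independent.

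The only point needing care is the measure-theoretic one. The nonresonance conditions live on the torus classes $\theta_a$, while the conull set must live on the continuum frequencies $\xi_a$. We must therefore check that excluding resonances modulo $\bZ^d$ removes only a null set. That the preimage of a null set under $p$ is null is handled by the countable-union argument above, so we do not expect a real obstacle there.
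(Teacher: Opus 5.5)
Your proposal is correct and follows essentially the same route as the paper: the same nonresonant set $E_\eta^{(q)}$, Corollary~\ref{thm:lattice-several-frequencies} applied at the classes $[\xi_a]$, the functionals $\ell_{\xi_a}$, and Proposition~\ref{prop:cell-freezing} applied coordinatewise. You also spell out two steps the paper leaves implicit: the null-set computation, and the identification of covariances and independence via \eqref{eq:exact-dealiasing} and \eqref{eq:lattice-distinct-frequency-covariance-zero}--\eqref{eq:lattice-distinct-frequency-nonconjugate-zero}.
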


\begin{proof}
Take $E_\eta^{(q)}$ to consist of the tuples with every $\xi_a\in E_\eta$ and,
for $a\neq b$,
\[
    [\xi_a]\neq[\xi_b],
    \qquad
    [\xi_a]+[\xi_b]\neq0
    \quad\text{in }\bT^d.
\]
Its complement is null. Apply Corollary~\ref{thm:lattice-several-frequencies}
to the lattice frequencies $[\xi_a]$, apply the functionals $\ell_{\xi_a}$,
and then use Proposition~\ref{prop:cell-freezing} for each coordinate.
\end{proof}

The congruence exclusions in this corollary are sufficient nonresonance
conditions inherited from the fixed unit-lattice reduction. They form a null
set and are not asserted to be optimal continuum-frequency conditions.
 \section{Consequences for ball windows and scattering statistics}
\label{sec:scattering}

We derive ball-window and point-process consequences of
Theorem~\ref{thm:smooth-continuum-fourier-clt}. Smooth approximation gives the
ball-window CLT. For point processes, we compare the centered ball transform
with the raw Fourier sum and use ergodicity to replace $R^{d/2}$ by the square
root of the observed point count. Squaring the Gaussian limits gives the
exponential and gamma limits.

Throughout the section, $E_\eta$ denotes the conull set from
Section~\ref{sec:continuum}. Our choice of this set excludes $0$.

\subsection{Ball windows}
\label{subsec:ball-windows}

Put $b:=\ind_{B_1}$. With the Fourier convention of
Section~\ref{sec:preliminaries},
\begin{equation}
\label{eq:ball-fourier-bessel}
    \hat b(\xi)=\|\xi\|^{-d/2}J_{d/2}(2\pi\|\xi\|),
\end{equation}
with the value at the origin understood by continuity. In particular,
$|\hat b(\xi)|\leq C(1+\|\xi\|)^{-(d+1)/2}$.

For $\xi\in\bR^d$, define
\begin{equation}
\label{eq:centered-ball-transform}
    Z_R^{B}(\xi)
    :=R^{-d/2}M(\ind_{B_R}\chi_\xi)
    =Z_R(\xi;b).
\end{equation}
The second equality extends the notation in
\eqref{eq:empirical-fourier-definition} from smooth functions to the indicator
$b$.

\begin{theorem}[Fourier CLT for ball windows]
\label{thm:ball-fourier-clt}
Let $\eta$ be a stationary random measure with local second moments whose
translation action is essentially free and CPE. Then, for every
$\xi\in E_\eta$,
\begin{equation}
\label{eq:ball-fourier-clt}
    Z_R^{B}(\xi)
    \dto \mathcal{CN}\bigl(0,s_\eta(\xi)\lambda_d(B_1)\bigr).
\end{equation}
If $(\xi_1,\ldots,\xi_q)$ belongs to the conull set
$E_\eta^{(q)}$ from Corollary~\ref{cor:continuum-several-frequencies}, then the
ball transforms at these frequencies converge jointly, and their Gaussian
limits are independent.
\end{theorem}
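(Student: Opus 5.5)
The plan is to approximate the ball indicator $b=\ind_{B_1}$ in $L^2(\lambda_d)$ by smooth functions, and to control the approximation error uniformly in $R$ through the Bartlett identity. Fix $\xi\in E_\eta$ and $\eps>0$. Choose $g\in C_c^\infty(\bR^d)$, supported in a fixed ball, with $\|b-g\|_2$ small and $|\hat b-\hat g|$ dominated by a decaying kernel. Concretely, I would take $g=b*\varphi_\delta$ for a smooth mollifier $\varphi_\delta$. Then $\hat b-\hat g=\hat b(1-\hat\varphi(\delta\cdot))$, so $|\hat b-\hat g|^2\leq C(1+\|u\|)^{-(d+1)}$ uniformly in $\delta$, and it tends to $0$ pointwise.

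The key estimate comes from Lemma~\ref{lem:bartlett-indicators}, whose Bartlett identity holds on the span of Schwartz functions and regular indicators. Applied together with \eqref{eq:scaled-modulated-fourier} and the change of variables $u=R(\lambda-\xi)$, it gives
\[
    \E_\eta|Z_R(\xi;b)-Z_R(\xi;g)|^2
    =\int_{\bR^d}|\hat b(u)-\hat g(u)|^2\,s_\eta(\xi+u/R)\,d\lambda_d(u).
\]
The kernel $K=|\hat b-\hat g|^2$ satisfies the hypothesis of Lemma~\ref{lem:differentiation-decaying-kernels} with $\delta=1$. Since $\xi$ is a Lebesgue point of $s_\eta$ by the construction of $E_\eta$, the right-hand side converges as $R\to\infty$ to $s_\eta(\xi)\|b-g\|_2^2$, which can be made smaller than $\eps$.

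Theorem~\ref{thm:smooth-continuum-fourier-clt} gives $Z_R(\xi;g)\dto\mathcal{CN}(0,s_\eta(\xi)\|g\|_2^2)$. These limits converge to $\mathcal{CN}(0,s_\eta(\xi)\lambda_d(B_1))$ as $g\to b$ in $L^2$. A standard approximation argument then yields \eqref{eq:ball-fourier-clt}; the form I would use is Billingsley's Theorem 3.2, or equivalently a comparison of characteristic functions with an $L^2$ error bound. For several frequencies the argument is the same: apply Corollary~\ref{cor:continuum-several-frequencies} to the smooth approximants $g$ at each $\xi_a$. The error in each coordinate is controlled as above, and the definition of $E_\eta^{(q)}$ ensures every $\xi_a\in E_\eta$. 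Independence of the limits passes to the limit, since for each $g$ the limit has a block-diagonal covariance and the covariances converge.

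The main obstacle is the uniform-in-$R$ tail control for the non-integrable-looking kernel $|\hat b|^2$, which decays only like $\|u\|^{-(d+1)}$. This decay is exactly what Lemma~\ref{lem:differentiation-decaying-kernels} requires, using translation boundedness of $s_\eta\lambda_d$ from Corollary~\ref{cor:cpe-bartlett-ac}. A second technical point is legitimately applying the Bartlett identity to $\ind_{B_R}\chi_\xi$. This requires the modulated version of Lemma~\ref{lem:bartlett-indicators}, namely the spectral measure formula \eqref{eq:bartlett-indicator-spectral-measure}, taken at time zero and extended bilinearly against the smooth function $g_R\chi_\xi$ via the mixed identity.
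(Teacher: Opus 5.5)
Your proposal is correct and follows essentially the same route as the paper. You approximate $b$ in $L^2$ by smooth windows. You then use the Bartlett identity together with Lemma~\ref{lem:differentiation-decaying-kernels} at the Lebesgue point $\xi$ to get $\lim_{R\to\infty}\E_\eta|Z_R^B(\xi)-Z_R(\xi;g)|^2=s_\eta(\xi)\|b-g\|_2^2$, conclude with Billingsley's converging-together theorem, and handle the joint case through Corollary~\ref{cor:continuum-several-frequencies}.

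The only difference is cosmetic. You take mollified approximants so that $|\hat b-\hat g|^2$ is dominated uniformly in $\delta$, whereas the paper simply expands $|\hat b-\hat h_n|^2$ and applies the lemma for each fixed $n$. Either works, since the lemma is only ever applied at a fixed smooth window.
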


\begin{proof}
Fix $\xi\in E_\eta$ and choose $h_n\in C_c^\infty(\bR^d)$ with
$h_n\to b$ in $L^2(\bR^d)$. For each fixed $n$,
Theorem~\ref{thm:smooth-continuum-fourier-clt} gives the limit of
$Z_R(\xi;h_n)$.

By the extension in Lemma~\ref{lem:bartlett-indicators}, the Bartlett
covariance identity applies to $b-h_n$. The decay in
\eqref{eq:ball-fourier-bessel}, together with the Schwartz decay of
$\hat h_n$, then allows Lemma~\ref{lem:differentiation-decaying-kernels} to be
applied after expanding $|\hat b-\hat h_n|^2$. Hence
\begin{equation}
\label{eq:ball-smooth-approximation-L2}
    \lim_{R\to\infty}
      \E_\eta\left|Z_R^{B}(\xi)-Z_R(\xi;h_n)\right|^2
    =s_\eta(\xi)\|b-h_n\|_2^2.
\end{equation}
The right-hand side tends to zero as $n\to\infty$. Moreover,
$\|h_n\|_2^2\to\|b\|_2^2=\lambda_d(B_1)$, so the Gaussian limits for the
smooth functions converge in distribution to
$\mathcal{CN}(0,s_\eta(\xi)\lambda_d(B_1))$. The standard
converging-together theorem \cite{Bil99} therefore gives
\eqref{eq:ball-fourier-clt}.

Now let $(\xi_1,\ldots,\xi_q)\in E_\eta^{(q)}$. For each fixed $n$,
Corollary~\ref{cor:continuum-several-frequencies} gives joint convergence of
\[
    \bigl(Z_R(\xi_1;h_n),\ldots,Z_R(\xi_q;h_n)\bigr)
\]
to independent complex Gaussian variables with variances
$s_\eta(\xi_a)\|h_n\|_2^2$. Equation
\eqref{eq:ball-smooth-approximation-L2}, applied separately at each $\xi_a$,
shows that the corresponding vector of ball transforms is approximated in
probability by this smooth vector. As $n\to\infty$, the joint Gaussian laws
converge to the product of the laws
$\mathcal{CN}(0,s_\eta(\xi_a)\lambda_d(B_1))$. A second application of the
converging-together theorem proves both the joint convergence and the stated
independence.
\end{proof}

\subsection{Point-process Fourier sums and scattering intensity}
\label{subsec:point-process-scattering}

Assume now that $\eta$ is a stationary point process with intensity
$\rho_\eta>0$. Since CPE implies ergodicity and
$F(\omega):=\omega(Q)$ belongs to $L^2(\eta)$ by local second moments, Wiener's
pointwise ergodic theorem for Euclidean ball averages \cite{Wie39}, with
$Q=[-1/2,1/2)^d$, gives
\[
    \frac{1}{\lambda_d(B_R)}\int_{B_R}F(t.\omega)\,dt
    \longrightarrow \E_\eta F=\rho_\eta
    \qquad\text{almost surely}.
\]
If $c=\sqrt d/2$, then
$F(t.\omega)=\omega(Q+t)$, and for $R>c$ Fubini's theorem gives
\[
    \int_{B_{R-c}}F(t.\omega)\,dt
    \leq \omega(B_R)
    \leq \int_{B_{R+c}}F(t.\omega)\,dt.
\]
Since $\lambda_d(B_{R\pm c})/\lambda_d(B_R)\to1$, it follows that
\begin{equation}
\label{eq:point-count-ergodic}
    \frac{\mathbb S\ind_{B_R}}{\lambda_d(B_R)}\longrightarrow\rho_\eta
    \qquad\text{almost surely}.
\end{equation}

Theorem~\ref{thm:ball-fourier-clt} concerns the centered random measure $M$,
whereas the scattering statistic uses the uncentered point sum. By the
definition of $M$, for every $\xi$,
\begin{equation}
\label{eq:point-process-centered-uncentered}
    \mathbb S(\ind_{B_R}\chi_\xi)
    =M(\ind_{B_R}\chi_\xi)
      +\rho_\eta\int_{B_R}\chi_\xi(x)\,d\lambda_d(x).
\end{equation}
For a fixed nonzero frequency, the second term is of smaller order than the
$R^{d/2}$ normalization. Indeed, \eqref{eq:ball-fourier-bessel} gives
\begin{equation}
\label{eq:ball-mean-negligible}
    R^{-d/2}\left|\int_{B_R}\chi_\xi(x)\,d\lambda_d(x)\right|
    =R^{d/2}|\hat b(R\xi)|=O_\xi(R^{-1/2}).
\end{equation}
Thus, for every $\xi\in E_\eta$,
\begin{equation}
\label{eq:point-process-centered-comparison}
    R^{-d/2}\mathbb S(\ind_{B_R}\chi_\xi)
    =Z_R^B(\xi)
      +\rho_\eta R^{-d/2}\int_{B_R}\chi_\xi\,d\lambda_d
    =Z_R^B(\xi)+o(1).
\end{equation}

\begin{corollary}[Empirical Fourier sums]
\label{cor:point-process-fourier-clt}
Under the hypotheses of Theorem~\ref{thm:ball-fourier-clt}, suppose that
$\eta$ is a point process with $\rho_\eta>0$. Then, for every
$\xi\in E_\eta$,
\begin{equation}
\label{eq:point-process-fourier-clt}
    \frac{1}{\sqrt{\mathbb S\ind_{B_R}}}
      \sum_{x\in\omega\cap B_R}\chi_\xi(x)
    \dto \mathcal{CN}\bigl(0,S_\eta(\xi)\bigr).
\end{equation}
The left-hand side is defined to be $0$ on
$\{\mathbb S\ind_{B_R}=0\}$. If $(\xi_1,\ldots,\xi_q)\in E_\eta^{(q)}$,
the corresponding Fourier sums
converge jointly, and their Gaussian limits are independent.
\end{corollary}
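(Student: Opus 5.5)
The proof combines three ingredients already available: Theorem~\ref{thm:ball-fourier-clt}, the comparison \eqref{eq:point-process-centered-comparison}, and the ergodic count limit \eqref{eq:point-count-ergodic}. Fix $\xi\in E_\eta$. Write the left-hand side of \eqref{eq:point-process-fourier-clt} on the event $\{\mathbb S\ind_{B_R}>0\}$ as
\[
    J_R(\xi)=\Bigl(\frac{R^d}{\mathbb S\ind_{B_R}}\Bigr)^{1/2}\,
    R^{-d/2}\mathbb S(\ind_{B_R}\chi_\xi).
\]
By \eqref{eq:point-process-centered-comparison}, the second factor equals $Z_R^B(\xi)+o(1)$, where the error is deterministic. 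Theorem~\ref{thm:ball-fourier-clt} then gives convergence in distribution of this factor to $\mathcal{CN}(0,s_\eta(\xi)\lambda_d(B_1))$.

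For the first factor, \eqref{eq:point-count-ergodic} gives $\mathbb S\ind_{B_R}/R^d\to\rho_\eta\lambda_d(B_1)$ almost surely. Since $\rho_\eta>0$, this limit is a strictly positive constant. Hence $\Pp(\mathbb S\ind_{B_R}=0)\to0$, so the convention on the null-count event changes the law only by a term tending to zero in total variation. It also gives $(R^d/\mathbb S\ind_{B_R})^{1/2}\to(\rho_\eta\lambda_d(B_1))^{-1/2}$ almost surely, hence in probability.

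Slutsky's theorem, in the form where one factor converges in probability to a constant, then yields convergence to $(\rho_\eta\lambda_d(B_1))^{-1/2}$ times a $\mathcal{CN}(0,s_\eta(\xi)\lambda_d(B_1))$ variable. Its law is $\mathcal{CN}(0,s_\eta(\xi)/\rho_\eta)=\mathcal{CN}(0,S_\eta(\xi))$. Properness is preserved because the scaling is real. Finally, the sum $\sum_{x\in\omega\cap B_R}\chi_\xi(x)$ equals $\mathbb S(\ind_{B_R}\chi_\xi)$ since $\eta$ is a simple point process.

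For several frequencies $(\xi_1,\ldots,\xi_q)\in E_\eta^{(q)}$, the argument is the same. The joint part of Theorem~\ref{thm:ball-fourier-clt} gives joint convergence of $(Z_R^B(\xi_a))_a$ to independent proper Gaussians. The deterministic $o(1)$ corrections from \eqref{eq:ball-mean-negligible} apply at each $\xi_a\neq0$; this holds because $0\notin E_\eta$. All coordinates share the same random normalization $(R^d/\mathbb S\ind_{B_R})^{1/2}$, which converges in probability to a constant. The multivariate Slutsky theorem then gives joint convergence, with limit a deterministic real rescaling of an independent Gaussian vector, so independence is preserved.

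The only points needing care are that the mean correction is $o(1)$ at every good frequency, which needs $\xi\neq0$ and is secured by $E_\eta$, and the handling of the event $\{\mathbb S\ind_{B_R}=0\}$. Both are settled by the bounds above, so there is no serious obstacle.
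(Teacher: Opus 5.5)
Your proposal is correct and follows essentially the same route as the paper: write the ratio as $R^{-d/2}\mathbb S(\ind_{B_R}\chi_\xi)$ divided by $\sqrt{\mathbb S\ind_{B_R}/R^d}$, control the numerator with \eqref{eq:point-process-centered-comparison} and Theorem~\ref{thm:ball-fourier-clt}, control the denominator with \eqref{eq:point-count-ergodic}, and conclude by the scalar or multivariate Slutsky theorem. The only cosmetic difference is the event $\{\mathbb S\ind_{B_R}=0\}$: you dispose of it through its vanishing probability, whereas the paper notes that the count is almost surely eventually positive, and both are fine.
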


\begin{proof}
Since $\lambda_d(B_R)=R^d\lambda_d(B_1)$, equation
\eqref{eq:point-count-ergodic} implies
\begin{equation}
\label{eq:point-count-square-root}
    \sqrt{\frac{\mathbb S\ind_{B_R}}{R^d}}
    \longrightarrow
    \sqrt{\rho_\eta\lambda_d(B_1)}
    \qquad\text{almost surely}.
\end{equation}
Since the limit in \eqref{eq:point-count-square-root} is positive,
$\mathbb S\ind_{B_R}>0$ eventually for almost every configuration. For such
$R$,
\[
    \frac{\mathbb S(\ind_{B_R}\chi_\xi)}
         {\sqrt{\mathbb S\ind_{B_R}}}
    =
    \frac{R^{-d/2}\mathbb S(\ind_{B_R}\chi_\xi)}
         {\sqrt{\mathbb S\ind_{B_R}/R^d}}.
\]
By \eqref{eq:point-process-centered-comparison} and
Theorem~\ref{thm:ball-fourier-clt}, the numerator converges in distribution to
$\mathcal{CN}(0,s_\eta(\xi)\lambda_d(B_1))$. The denominator converges almost
surely to $\sqrt{\rho_\eta\lambda_d(B_1)}$. Slutsky's theorem therefore gives
\eqref{eq:point-process-fourier-clt}, since
$S_\eta=s_\eta/\rho_\eta$.

For $(\xi_1,\ldots,\xi_q)\in E_\eta^{(q)}$, use the joint ball-window limit in
Theorem~\ref{thm:ball-fourier-clt}, apply
\eqref{eq:point-process-centered-comparison} to each coordinate, and use the
same almost-sure denominator limit \eqref{eq:point-count-square-root}. The
multivariate form of Slutsky's theorem gives the joint assertion, and the
independence is the independence already present in the joint ball-window
limit.
\end{proof}

\begin{corollary}[Empirical scattering intensity]
\label{cor:empirical-scattering-exponential}
Under the hypotheses of Corollary~\ref{cor:point-process-fourier-clt}, for
$\xi\in E_\eta$ the empirical scattering intensity from
\eqref{eq:point-process-empirical-transform} satisfies
\begin{equation}
\label{eq:empirical-scattering-exponential}
    I_R(\xi)
    =\frac{1}{\mathbb S\ind_{B_R}}
      \left|\sum_{x\in\omega\cap B_R}\chi_\xi(x)\right|^2
    \dto S_\eta(\xi)\,\operatorname{Exp}(1).
\end{equation}
As in \eqref{eq:point-process-empirical-transform}, this quantity is defined
to be $0$ on $\{\mathbb S\ind_{B_R}=0\}$.
\end{corollary}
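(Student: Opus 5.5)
The plan is to deduce \eqref{eq:empirical-scattering-exponential} from Corollary~\ref{cor:point-process-fourier-clt} by the continuous mapping theorem, and then identify the law of the squared modulus through \eqref{eq:complex-gaussian-exponential}. Fix $\xi\in E_\eta$ and set
\[
    J_R(\xi)=\frac{1}{\sqrt{\mathbb S\ind_{B_R}}}\sum_{x\in\omega\cap B_R}\chi_\xi(x),
\]
with $J_R(\xi):=0$ on $\{\mathbb S\ind_{B_R}=0\}$, exactly as in \eqref{eq:point-process-empirical-transform}. By definition $I_R(\xi)=|J_R(\xi)|^2$ everywhere, including on the event of an empty window, since both sides are then $0$. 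Thus no separate treatment of that event is needed beyond what was already done in Corollary~\ref{cor:point-process-fourier-clt}.

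Corollary~\ref{cor:point-process-fourier-clt} gives $J_R(\xi)\dto Z\sim\mathcal{CN}(0,S_\eta(\xi))$ in $\bC$. The map $z\mapsto|z|^2$ from $\bC$ to $\bR$ is continuous, so the continuous mapping theorem yields $I_R(\xi)\dto|Z|^2$. By \eqref{eq:complex-gaussian-exponential} with $\sigma^2=S_\eta(\xi)$, we have $|Z|^2\sim S_\eta(\xi)\operatorname{Exp}(1)$, which is the claim.

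The only point needing care is the degenerate case $S_\eta(\xi)=0$, which can occur since $s_\eta$ may vanish on a set of positive measure. Here $\mathcal{CN}(0,0)=\delta_0$ and $S_\eta(\xi)\operatorname{Exp}(1)$ is interpreted as $\delta_0$, and the continuous mapping argument applies unchanged. If desired, the same reasoning applied to the joint statement of Corollary~\ref{cor:point-process-fourier-clt} gives joint convergence of $(I_R(\xi_a))_a$ to independent scaled exponentials for $(\xi_1,\ldots,\xi_q)\in E_\eta^{(q)}$, and summing via \eqref{eq:complex-gaussian-gamma} gives the corresponding gamma limits. There is no genuine obstacle; all the analytic work is contained in the preceding corollary.
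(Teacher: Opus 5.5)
Your proposal is correct and matches the paper's proof: you apply the continuous mapping theorem with $z\mapsto|z|^2$ to Corollary~\ref{cor:point-process-fourier-clt} and identify the limit through \eqref{eq:complex-gaussian-exponential}. One caveat concerns only your optional aside: summing the squared moduli over several frequencies gives a gamma law only after dividing each term by its own $S_\eta(\xi_a)>0$, because \eqref{eq:complex-gaussian-gamma} requires a common variance.
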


\begin{proof}
If $Z\sim\mathcal{CN}(0,S_\eta(\xi))$, then
$|Z|^2\sim S_\eta(\xi)\operatorname{Exp}(1)$ by
\eqref{eq:complex-gaussian-exponential}. The result follows from
Corollary~\ref{cor:point-process-fourier-clt} and the continuous mapping
theorem.
\end{proof}

\subsection{Averages over \texorpdfstring{$L^2$}{L2}-orthonormal functions}
\label{subsec:orthonormal-test-functions-scattering}

Let $h_1,\ldots,h_q\in C_c^\infty(\bR^d;\bC)$ be orthonormal in
$L^2(\bR^d)$. For $\xi\in E_\eta$, define
\begin{equation}
\label{eq:orthonormal-spectral-average}
    Q_R(\xi):=\frac1q\sum_{j=1}^q|Z_R(\xi;h_j)|^2.
\end{equation}
Since
$\int h_a\overline{h_b}\,d\lambda_d=\delta_{ab}$,
Theorem~\ref{thm:smooth-continuum-fourier-clt} gives
\[
    \bigl(Z_R(\xi;h_1),\ldots,Z_R(\xi;h_q)\bigr)
    \dto (G_1,\ldots,G_q),
\]
where $G_1,\ldots,G_q$ are independent
$\mathcal{CN}(0,s_\eta(\xi))$ variables. Therefore
$|G_j|^2/s_\eta(\xi)$ are independent $\operatorname{Exp}(1)$ variables when
$s_\eta(\xi)>0$, and their sum has the $\Gamma(q,1)$ distribution. The same
conclusion at $s_\eta(\xi)=0$ is understood in the degenerate sense.

\begin{corollary}[Gamma limit]
\label{cor:orthonormal-gamma-limit}
For every $\xi\in E_\eta$,
\begin{equation}
\label{eq:orthonormal-gamma-limit}
    Q_R(\xi)
    \dto \frac{s_\eta(\xi)}{q}\,\Gamma(q,1).
\end{equation}
Here $\Gamma(q,1)$ has shape $q$ and scale $1$; its mean and variance are both
$q$. If $\eta$ is a point process of positive intensity, then
\begin{equation}
\label{eq:orthonormal-structure-factor-gamma}
    \frac{Q_R(\xi)}{\rho_\eta}
    \dto \frac{S_\eta(\xi)}{q}\,\Gamma(q,1).
\end{equation}
\end{corollary}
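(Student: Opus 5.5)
The plan is to apply the continuous mapping theorem to the joint limit from Theorem~\ref{thm:smooth-continuum-fourier-clt}. Fix $\xi\in E_\eta$ and the orthonormal family $h_1,\ldots,h_q$. Theorem~\ref{thm:smooth-continuum-fourier-clt}, with $m=q$, gives
\[
    \bigl(Z_R(\xi;h_1),\ldots,Z_R(\xi;h_q)\bigr)\dto(G_1,\ldots,G_q)
\]
in $\bC^q$. The limit is proper complex Gaussian with covariance $s_\eta(\xi)\delta_{ab}$, because $\int h_a\overline{h_b}\,d\lambda_d=\delta_{ab}$ in \eqref{eq:smooth-continuum-limit-covariance}.

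The first step is to identify the law of this vector. The nonconjugate covariances vanish by \eqref{eq:smooth-continuum-limit-nonconjugate}, and the conjugate covariances are diagonal. Together these make the real vector $(\Re G_a,\Im G_a)_{a}$ centered Gaussian with covariance $\tfrac12 s_\eta(\xi)I_{2q}$. Hence the $G_a$ are independent $\mathcal{CN}(0,s_\eta(\xi))$ variables.

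The second step is the squaring. The map $(z_1,\ldots,z_q)\mapsto q^{-1}\sum_j|z_j|^2$ is continuous on $\bC^q$, so $Q_R(\xi)\dto q^{-1}\sum_j|G_j|^2$. There are then two cases.
\begin{itemize}
\item If $s_\eta(\xi)>0$, then \eqref{eq:complex-gaussian-gamma} gives $s_\eta(\xi)^{-1}\sum_j|G_j|^2\sim\Gamma(q,1)$. Scaling then yields \eqref{eq:orthonormal-gamma-limit}.
\item If $s_\eta(\xi)=0$, every $G_j$ vanishes almost surely. The limit is then $\delta_0=0\cdot\Gamma(q,1)$, which is the degenerate reading of the statement.
\end{itemize}
The stated mean and variance of $\Gamma(q,1)$ follow from the recorded facts $a\vartheta$ and $a\vartheta^2$ with $a=q$, $\vartheta=1$.

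For the point-process clause, the intensity $\rho_\eta>0$ is a deterministic constant. Dividing by it is continuous, and $S_\eta=s_\eta/\rho_\eta$ by \eqref{eq:structure-factor-definition}, which gives \eqref{eq:orthonormal-structure-factor-gamma}.

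I expect no genuine obstacle; the only care needed is in the first step. There one must use properness as well as the diagonal conjugate covariance to get independence and the correct split of variance between real and imaginary parts, since diagonal $\E[G_a\overline{G_b}]$ alone would not suffice.
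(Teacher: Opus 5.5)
Your proposal is correct and follows the paper's argument: the joint smooth-window CLT of Theorem~\ref{thm:smooth-continuum-fourier-clt}, applied to the orthonormal family, gives independent $\mathcal{CN}(0,s_\eta(\xi))$ limits, and then the continuous mapping theorem, \eqref{eq:complex-gaussian-gamma} (read degenerately when $s_\eta(\xi)=0$), and division by the constant $\rho_\eta$ give both limits. Your check that properness together with the diagonal Hermitian covariance produces the real covariance $\tfrac12 s_\eta(\xi)I_{2q}$ spells out the independence step that the paper states without comment.
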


The limiting gamma law in \eqref{eq:orthonormal-gamma-limit} has mean
$s_\eta(\xi)$ and variance $s_\eta(\xi)^2/q$. At frequencies with
$s_\eta(\xi)>0$, its relative variance is $1/q$.
 \section{A zero-entropy counterexample}
\label{sec:counterexample}

We show that regularity of the Bartlett spectrum is not sufficient for a
Fourier central limit theorem. We construct a stationary
ergodic random measure with zero entropy and a bounded continuous Bartlett
density that is positive almost everywhere, while its normalized Fourier
transforms remain uniformly bounded along a sequence of expanding intervals.
The construction starts from the Rudin--Shapiro sequence. Its two-point
correlations are the same as those of a fair Bernoulli $\{\pm1\}$-sequence,
but its finite exponential sums satisfy a deterministic $O(\sqrt N)$ bound.

\subsection{The Rudin--Shapiro system}
\label{subsec:rudin-shapiro-sums}

For $n\geq0$, write the binary expansion of $n$ as
\[
    n=\sum_{j\geq0} e_j(n)2^j,
    \qquad e_j(n)\in\{0,1\},
\]
and let
\[
    b_n:=\sum_{j\geq0} e_j(n)e_{j+1}(n),
    \qquad
    a_n:=(-1)^{b_n}.
\]
Thus $b_n$ counts the occurrences of the block $11$ in the binary expansion
of $n$. The sequence $(a_n)_{n\geq0}$ begins
\[
    1,1,1,-1,1,1,-1,1,1,1,1,-1,-1,-1,1,-1,\ldots
\]
and is the one-sided Rudin--Shapiro sequence. The sequence goes back to work
of Golay, Shapiro and Rudin \cite{Gol51,Sha51,Rud59}; see also
\cite{AS03,Que10,BG13,Maz24} for its automatic, substitution, dynamical and
spectral descriptions.

A convenient substitution model uses the alphabet $\{0,1,2,3\}$ and the
constant-length substitution
\begin{equation}
\label{eq:rs-substitution}
    0\mapsto02,\qquad
    1\mapsto32,\qquad
    2\mapsto01,\qquad
    3\mapsto31.
\end{equation}
Starting from $0$ gives a one-sided fixed point. Under the coding
\[
    0,2\mapsto 1,
    \qquad
    1,3\mapsto -1,
\]
its image is $(a_n)_{n\geq0}$; see \cite[Section~2]{Maz24}. Let
$\mathcal L_{\rm RS}$ be the set of finite words occurring in
$(a_n)_{n\geq0}$, and define
\begin{equation}
\label{eq:rs-hull}
    Y:=\bigl\{y\in\{-1,1\}^{\bZ}:
      \text{every finite subword of $y$ belongs to $\mathcal L_{\rm RS}$}\bigr\}.
\end{equation}
This is the image, under the binary coding above, of the two-sided subshift
associated with \eqref{eq:rs-substitution}. The substitution is primitive, so
its two-sided subshift is minimal and uniquely ergodic; see, for example,
\cite{Que10}. The factor $Y$ is therefore also minimal and uniquely ergodic.
Let $T:Y\to Y$ be the left shift and let $\nu$ be its unique invariant
probability measure. Finally, set
\[
    g(y):=y_0.
\]
The substitution gives each of its four letters frequency $1/4$; after the binary coding, the two symbols therefore have equal $\nu$-frequency, so $\int_Y g\,d\nu=0$. The
Rudin--Shapiro autocorrelation is
\begin{equation}
\label{eq:rs-autocorrelation}
    \int_Y g(y)g(T^m y)\,d\nu(y)=\delta_{m,0},
    \qquad m\in\bZ;
\end{equation}
see \cite[Lemma~4.1]{Maz24} and \cite[Section~10.2]{BG13}.

For $|z|=1$, put
\begin{equation}
\label{eq:rs-exponential-sum}
    P_N(y,z):=\sum_{j=0}^{N-1}g(T^j y)z^j.
\end{equation}
The autocorrelation identity gives the exact $L^2$ size of these sums, while
a theorem of Balister gives a uniform pointwise bound.

\begin{lemma}[Rudin--Shapiro bounds]
\label{lem:rs-bounds}
For every $N\geq1$ and $|z|=1$,
\begin{equation}
\label{eq:rs-L2-and-uniform}
    \int_Y |P_N(y,z)|^2\,d\nu(y)=N,
    \qquad
    \sup_{y\in Y}|P_N(y,z)|\leq\sqrt{10N}.
\end{equation}
\end{lemma}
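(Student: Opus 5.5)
The lemma has two independent parts, and I would treat them separately.

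\emph{The $L^2$ identity.} I would expand the square and integrate term by term:
\[
    \int_Y|P_N(y,z)|^2\,d\nu(y)
    =\sum_{j,j'=0}^{N-1}z^{j}\overline{z}^{\,j'}\int_Y g(T^jy)g(T^{j'}y)\,d\nu(y).
\]
By $T$-invariance of $\nu$, the inner integral equals $\int_Y g\,(g\circ T^{j'-j})\,d\nu$. The autocorrelation identity \eqref{eq:rs-autocorrelation} then kills every off-diagonal term. Since $|z|=1$, each of the $N$ diagonal terms contributes $1$, and the sum is $N$.

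\emph{The uniform bound.} The plan is to reduce the estimate for an arbitrary $y\in Y$ to Balister's theorem on the one-sided sequence. I take Balister's result to say that every finite block of consecutive terms $a_M,\ldots,a_{M+N-1}$ of the Rudin--Shapiro sequence satisfies
\[
    \Bigl|\sum_{j=0}^{N-1}a_{M+j}z^j\Bigr|\leq\sqrt{10N}
\]
for all $M\geq0$, $N\geq1$ and $|z|=1$. Now fix $y\in Y$ and $N$. The word $y_0y_1\cdots y_{N-1}$ is a finite subword of $y$, so by \eqref{eq:rs-hull} it lies in $\mathcal L_{\rm RS}$. Hence it equals $a_Ma_{M+1}\cdots a_{M+N-1}$ for some $M$. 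Because $g(T^jy)=y_j$, this gives $P_N(y,z)=\sum_j a_{M+j}z^j$, and the bound follows from Balister's estimate with the same constant.

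\emph{Main obstacle.} There is nothing to prove beyond the reduction; the only delicate point is citing Balister in the right form. The classical Rudin--Shapiro estimate $|\sum_{n<2^k}a_nz^n|\leq 2^{(k+1)/2}$ applies only to dyadic initial segments. From it one gets $O(\sqrt N)$ for arbitrary windows via the binary decomposition of $[M,M+N)$ and the substitution structure, but only with a worse constant. I would therefore cite Balister's theorem explicitly in its arbitrary-window form, with constant $\sqrt{10}$. I would also remark that only some bound $O(\sqrt N)$ uniform in $y$ and $z$ is needed later: that alone keeps the normalized Fourier transforms in a fixed bounded disk, which is what rules out a Gaussian limit.
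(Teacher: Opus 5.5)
Your proposal is correct and follows the paper's proof. You expand the square and use the autocorrelation identity to get the $L^2$ identity, and for the uniform bound you use the hull's language to match $(y_0,\ldots,y_{N-1})$ with a block $a_m,\ldots,a_{m+N-1}$ and then apply Balister's arbitrary-window estimate; your shifted sum differs from the paper's $z^{-m}\sum_{j=m}^{m+N-1}a_jz^j$ only by a unimodular factor.
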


\begin{proof}
Expanding the square in \eqref{eq:rs-exponential-sum} and using
\eqref{eq:rs-autocorrelation} gives
\[
    \int_Y |P_N(y,z)|^2\,d\nu(y)
    =\sum_{j,k=0}^{N-1}z^j\overline{z}^k
      \int_Y g(T^j y)g(T^k y)\,d\nu(y)
    =N.
\]

Balister proves that, for $m\geq0$, $N\geq1$, and $|z|=1$,
\begin{equation}
\label{eq:balister-interval-bound}
    \left|\sum_{j=m}^{m+N-1}a_jz^j\right|\leq\sqrt{10N};
\end{equation}
see \cite[Theorem~3]{Bal19}. By the definition of $Y$ in
\eqref{eq:rs-hull}, for every $y\in Y$ the word
$(y_0,\ldots,y_{N-1})$ occurs in $(a_n)_{n\geq0}$. Hence there is $m\geq0$
such that $g(T^j y)=a_{m+j}$ for $0\leq j<N$. Therefore
\[
    P_N(y,z)
    =z^{-m}\sum_{j=m}^{m+N-1}a_jz^j,
\]
and \eqref{eq:balister-interval-bound} gives the second estimate in
\eqref{eq:rs-L2-and-uniform}.
\end{proof}

Since \eqref{eq:rs-substitution} is a primitive constant-length substitution,
its subshift has zero topological entropy; see \cite{Que10}. The binary
Rudin--Shapiro system is a factor of this subshift, so
\begin{equation}
\label{eq:rs-zero-entropy}
    h_\nu(T)=0.
\end{equation}

\subsection{The suspension and its Bartlett spectrum}
\label{subsec:rs-suspension}

We next pass from the discrete system to a stationary random measure on
$\bR$. Fix $\alpha>0$ and use the half-open constant-roof model
\[
    X:=Y\times[0,\alpha),
    \qquad
    \mu:=\nu\otimes\alpha^{-1}\lambda_1|_{[0,\alpha)}.
\]
For $x=(y,s)\in X$ and $t\in\bR$, let
\[
    n=n(t,s):=\left\lfloor\frac{s+t}{\alpha}\right\rfloor,
    \qquad
    \phi_t(y,s):=\bigl(T^n y,s+t-n\alpha\bigr).
\]
This is the constant-roof suspension flow. Define $F(y,s):=g(y)$. Then
$\int_XF\,d\mu=0$.

Choose $\rho>0$ and $0<\eps<\rho$. For $x\in X$, define
\begin{equation}
\label{eq:rs-random-measure}
    \omega_x(dt):=\bigl(\rho+\eps F(\phi_t x)\bigr)\,dt,
\end{equation}
and let $\eta$ be the law of $\omega_x$ under $\mu$. For every
$f\in C_c(\bR)$, the map
$x\mapsto\omega_x(f)=\int f(t)(\rho+\eps F(\phi_t x))\,dt$ is measurable by
Fubini's theorem; hence $x\mapsto\omega_x$ is Borel for the vague topology.
Since $|F|=1$ and $\eps<\rho$, $\omega_x$ is a positive Radon measure.
Moreover, for every
$t\in\bR$,
\[
    \omega_{\phi_t x}=t.\omega_x
\]
under \eqref{eq:translation-action-random-measures}. Thus $\eta$ is
stationary. It is ergodic because it is a factor of the ergodic suspension.
For every bounded Borel set $A$,
\[
    \omega_x(A)\leq(\rho+\eps)\lambda_1(A),
\]
so $\eta$ has local second moments, and its intensity is $\rho$.

For $\bR$-flows the spatial entropy used in
Subsection~\ref{subsec:entropy-cpe} is the usual entropy per unit time.
Hence, by \eqref{eq:rs-zero-entropy} and Abramov's formula \cite{Abr59}, the
constant-roof suspension has entropy $\alpha^{-1}h_\nu(T)=0$. Its
random-measure factor therefore has zero entropy as well.

The centered random measure is
\begin{equation}
\label{eq:rs-centered-measure}
    M_x(dt)=\eps F(\phi_t x)\,dt.
\end{equation}
We now compute the covariance of $F$. Write $t=m\alpha+u$ with $m\geq0$ and
$0\leq u<\alpha$. If $x=(y,s)$, then
\[
    F(\phi_t x)
    =
    \begin{cases}
        g(T^m y),&0\leq s<\alpha-u,\\
        g(T^{m+1}y),&\alpha-u\leq s<\alpha.
    \end{cases}
\]
Averaging first over $s$ and then over $y$ gives
\begin{equation}
\label{eq:rs-suspension-covariance-step}
    \E_\mu\bigl[F(\phi_{m\alpha+u}x)F(x)\bigr]
    =\left(1-\frac{u}{\alpha}\right)
      \int_Y g(T^m y)g(y)\,d\nu(y)
     +\frac{u}{\alpha}
      \int_Y g(T^{m+1} y)g(y)\,d\nu(y).
\end{equation}
Using \eqref{eq:rs-autocorrelation} and the symmetry
$\E[F(\phi_{-t}x)F(x)]=\E[F(\phi_t x)F(x)]$, we obtain
\begin{equation}
\label{eq:rs-triangular-covariance}
    R_F(t):=\E_\mu[F(\phi_t x)F(x)]
    =\left(1-\frac{|t|}{\alpha}\right)_+.
\end{equation}

Set
\begin{equation}
\label{eq:rs-c-alpha}
    c_\alpha(\xi):=\int_0^\alpha\chi_\xi(t)\,dt.
\end{equation}
Since
\[
    R_F=\alpha^{-1}1_{[0,\alpha]}*1_{[-\alpha,0]},
\]
its Fourier transform is $\widehat R_F=\alpha^{-1}|c_\alpha|^2$. For
$f,g\in\mathcal S(\bR)$, \eqref{eq:rs-centered-measure}, stationarity, and
Fubini give
\begin{align*}
 \Cov_\eta\bigl(M(f),M(g)\bigr)
 &=\eps^2\iint_{\bR^2} f(t)\overline{g(u)}R_F(t-u)\,dt\,du\\
 &=\eps^2\int_\bR \hat f(\xi)\overline{\hat g(\xi)}
       \widehat R_F(\xi)\,d\xi.
\end{align*}
Comparison with the Bartlett covariance identity therefore shows that the
Bartlett spectrum of $\eta$ is absolutely continuous with density
\begin{equation}
\label{eq:rs-bartlett-density}
    s_\eta(\xi)
    =\frac{\eps^2}{\alpha}|c_\alpha(\xi)|^2
    =\eps^2\alpha
      \left(\frac{\sin(\pi\alpha\xi)}{\pi\alpha\xi}\right)^2,
\end{equation}
where the value at $\xi=0$ is given by continuity. Hence $s_\eta$ is bounded
and continuous, and
\begin{equation}
\label{eq:rs-bartlett-positive-set}
    s_\eta(\xi)>0
    \quad\Longleftrightarrow\quad
    \xi\notin\alpha^{-1}(\bZ\setminus\{0\}).
\end{equation}
In particular, $s_\eta$ is positive for $\lambda_1$-almost every $\xi$.

\subsection{Failure of the Fourier limit}
\label{subsec:rs-fourier-failure}

The covariance computation above determines the limiting second moments. We
now use Lemma~\ref{lem:rs-bounds} to show that the corresponding Fourier
transforms cannot have Gaussian limits.

For $N\geq1$ and $\xi\in\bR$, define
\begin{equation}
\label{eq:rs-suspension-fourier-integral}
    I_N(x,\xi)
    :=\int_0^{N\alpha}\chi_\xi(t)F(\phi_t x)\,dt.
\end{equation}
Then
\begin{equation}
\label{eq:rs-normalized-centered-transform}
    W_N(\xi)
    :=\frac{1}{\sqrt{N\alpha}}
      M_x(1_{[0,N\alpha]}\chi_\xi)
    =\frac{\eps}{\sqrt{N\alpha}}I_N(x,\xi).
\end{equation}
We suppress the dependence of $W_N(\xi)$ on $x\in X$.

Write $x=(y,s)$, with $0\leq s<\alpha$, and define
$G_y(u)=g(T^j y)$ for $j\alpha\leq u<(j+1)\alpha$. Then
$F(\phi_t x)=G_y(t+s)$, so
\[
    I_N(x,\xi)
    =\overline{\chi_\xi(s)}
      \int_s^{N\alpha+s}\chi_\xi(u)G_y(u)\,du.
\]
On the $j$th complete roof interval,
\[
    \int_{j\alpha}^{(j+1)\alpha}
       \chi_\xi(u)G_y(u)\,du
    =c_\alpha(\xi)g(T^j y)e^{2\pi i\alpha j\xi}.
\]
The intervals $[s,N\alpha+s]$ and $[0,N\alpha]$ differ by two intervals of
total length at most $2\alpha$. Since $|G_y|=1$, their contribution has
absolute value at most $2\alpha$. Therefore
\begin{equation}
\label{eq:rs-suspension-reduction}
    I_N(x,\xi)
    =\overline{\chi_\xi(s)}
      \left(c_\alpha(\xi)
      P_N\bigl(y,e^{2\pi i\alpha\xi}\bigr)+E_N(x,\xi)\right),
    \qquad |E_N(x,\xi)|\leq2\alpha.
\end{equation}
Lemma~\ref{lem:rs-bounds} gives
\begin{equation}
\label{eq:rs-uniform-normalized-bound}
    |W_N(\xi)|
    \leq\eps\sqrt{\frac{10}{\alpha}}\,|c_\alpha(\xi)|
      +2\eps\sqrt{\frac{\alpha}{N}}.
\end{equation}
Thus, for each fixed $\xi$, all laws of $W_N(\xi)$ are supported in one
bounded disk, independently of $N$.

On the other hand, stationarity and \eqref{eq:rs-triangular-covariance} give
\begin{align}
\label{eq:rs-second-moment-limit}
    \E_\mu|W_N(\xi)|^2
    &=\eps^2\int_{-\alpha}^{\alpha}
       \left(1-\frac{|t|}{N\alpha}\right)
       \chi_\xi(t)R_F(t)\,dt \\
    &\longrightarrow
      \eps^2\int_{-\alpha}^{\alpha}\chi_\xi(t)R_F(t)\,dt
      =s_\eta(\xi).
\end{align}
The second moments therefore have exactly the asymptotic value prescribed by
the Bartlett density, even though the laws are confined to a fixed bounded
set.

\begin{theorem}[Regular Bartlett spectrum without a Fourier CLT]
\label{thm:rs-counterexample}
The stationary ergodic random measure $\eta$ defined by
\eqref{eq:rs-random-measure} has zero entropy and Bartlett density
\eqref{eq:rs-bartlett-density}. For every $\xi$ with $s_\eta(\xi)>0$, the sequence $W_N(\xi)$ does not
converge to any centered complex Gaussian law. In particular,
\begin{equation}
\label{eq:rs-no-gaussian-limit}
    W_N(\xi)\not\dto\mathcal{CN}(0,s_\eta(\xi)),
\end{equation}
and
\begin{equation}
\label{eq:rs-no-exponential-limit}
    |W_N(\xi)|^2
    \not\dto s_\eta(\xi)\operatorname{Exp}(1).
\end{equation}
Moreover, for every such $\xi$ there is $h\in C_c^\infty(\bR;\bC)$ for
which $R^{-1/2}M(h(\cdot/R)\chi_\xi)$ does not converge, as $R\to\infty$,
to $\mathcal{CN}(0,s_\eta(\xi)\lVert h\rVert_2^2)$. In particular, a bounded
continuous Bartlett density, positive $\lambda_1$-almost everywhere, does not
imply an almost-everywhere-frequency empirical Fourier CLT.
\end{theorem}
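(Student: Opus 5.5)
The zero-entropy and Bartlett-density assertions have already been established in Subsection~\ref{subsec:rs-suspension}: Abramov's formula gives zero entropy, and \eqref{eq:rs-bartlett-density} gives the density. So the plan concentrates on the failure of Gaussian limits. Fix $\xi$ with $s_\eta(\xi)>0$.

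\textbf{Step 1: no Gaussian limit for $W_N(\xi)$.} Suppose $W_N(\xi)\dto G$ with $G$ centered complex Gaussian. By \eqref{eq:rs-uniform-normalized-bound}, every $W_N(\xi)$ lies in a fixed disk of radius $r_\xi:=\eps\sqrt{10/\alpha}\,|c_\alpha(\xi)|+2\eps\sqrt{\alpha}$. The closed disk is closed, so the portmanteau theorem puts $G$ in it almost surely. A centered Gaussian supported in a bounded set is degenerate, so $G=0$. On the other hand, the variables are uniformly bounded, hence uniformly integrable, so second moments pass to the limit. By \eqref{eq:rs-second-moment-limit} this gives $\E|G|^2=s_\eta(\xi)>0$, a contradiction. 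This proves \eqref{eq:rs-no-gaussian-limit}.

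For \eqref{eq:rs-no-exponential-limit}, suppose $|W_N(\xi)|^2\dto s_\eta(\xi)\operatorname{Exp}(1)$. The limit would then be supported in $[0,r_\xi^2]$. But the exponential law has unbounded support, which is impossible.

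\textbf{Step 2: smooth windows.} I would take $R=N\alpha$ and aim for a statement along the full family $R\to\infty$ with a suitable $h$. My first choice is a nonnegative $h\in C_c^\infty$ supported in $[0,1]$ with $\|h\|_2>0$. I would write $h(t/R)$ as a Riemann--Stieltjes superposition of indicators, $h(t/R)=-\int_0^1 h'(v)\,\ind_{[0,vR]}(t)\,dv$. Then
\[
    M\bigl(h(\cdot/R)\chi_\xi\bigr)
    =-\int_0^1h'(v)\,M\bigl(\ind_{[0,vR]}\chi_\xi\bigr)\,dv.
\]
The reduction \eqref{eq:rs-suspension-reduction} combined with Balister's bound in Lemma~\ref{lem:rs-bounds} gives a uniform bound for the interval transforms on an arbitrary length $L$, not just multiples of $\alpha$:
\[
    \bigl|M(\ind_{[0,L]}\chi_\xi)\bigr|\le \eps\bigl(|c_\alpha(\xi)|\sqrt{10(L/\alpha+1)}+2\alpha\bigr).
\]
It follows that $|R^{-1/2}M(h(\cdot/R)\chi_\xi)|\le C_{h,\xi}$ uniformly in $R\ge1$. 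The argument of Step~1 then applies verbatim. The variables are bounded, so any centered Gaussian limit must be $0$. However, the Bartlett computation \eqref{eq:empirical-fourier-second-moment-density} with Lemma~\ref{lem:differentiation-decaying-kernels} (the density is continuous, so every $\xi$ is a Lebesgue point) gives second moments converging to $s_\eta(\xi)\|h\|_2^2>0$. Uniform integrability again forces the limit to have this variance, a contradiction. In fact this shows failure for every nonzero $h\in C_c^\infty$ supported in $[0,\infty)$, and the same argument handles general supports after splitting at $0$ and using the time-reversed sums.

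\textbf{Main obstacle.} The delicate point is the uniform pointwise bound for interval transforms of arbitrary length starting at $0$, which is needed inside the superposition integral. It requires handling the partial roof intervals at both ends; these contribute at most $2\alpha$. It also requires the fact that Balister's estimate applies to every window of the hull. Once that bound holds, the smooth-window case reduces to the bounded-support contradiction.
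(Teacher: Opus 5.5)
Your proof is correct. Step~1 is essentially the paper's argument: the laws of $W_N(\xi)$ lie in a fixed closed disk by \eqref{eq:rs-uniform-normalized-bound}, so $\delta_0$ is the only possible centered Gaussian limit. That is then excluded because bounded variables carry their second moments, which tend to $s_\eta(\xi)>0$, to the limit.

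Step~2 takes a genuinely different route from the paper. The paper bounds only the symmetric window, $\sup_{R,x}R^{-1/2}|M_x(\ind_{B_R}\chi_\xi)|<\infty$, and then argues by contradiction. If the smooth-window CLT held at $\xi$ for every $h\in C_c^\infty$, the $L^2$-approximation argument from the proof of Theorem~\ref{thm:ball-fourier-clt} would force $R^{-1/2}M(\ind_{B_R}\chi_\xi)\dto\mathcal{CN}(0,2s_\eta(\xi))$. That argument uses the indicator extension in Lemma~\ref{lem:bartlett-indicators} together with Lemma~\ref{lem:differentiation-decaying-kernels}. The conclusion is impossible for laws confined to a fixed disk. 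So the paper obtains the bad $h$ only non-constructively, as some member of a sequence approximating $\ind_{B_1}$.

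You instead write the smooth window as a superposition of interval indicators. You apply Fubini pathwise, which is legitimate since $M_x$ has a bounded density. The interval bound, valid for every length and position in the two-sided hull, then bounds $R^{-1/2}M(h(\cdot/R)\chi_\xi)$ directly. This avoids the indicator extension and the approximation argument altogether, and it exhibits the failure explicitly.

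With the splitting
$h(t/R)=-\int_0^\infty h'(v)\ind_{[0,vR]}(t)\,dv+\int_{-\infty}^0 h'(v)\ind_{[vR,0)}(t)\,dv$,
your route proves a stronger statement. For every nonzero $h\in C_c^\infty(\bR;\bC)$, $R^{-1/2}M(h(\cdot/R)\chi_\xi)$ has no centered Gaussian limit at all. No time reversal is needed for the negative half-line: Balister's bound applies to $P_L(T^my,z)$ for every $m\in\bZ$, as the paper records.

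Two minor remarks. The uniform bound alone already excludes the specific law $\mathcal{CN}(0,s_\eta(\xi)\|h\|_2^2)$; your second-moment computation is needed only to rule out $\delta_0$. Also, the nonnegativity of $h$ in your first choice plays no role.
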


\begin{proof}
Fix $\xi$ with $s_\eta(\xi)>0$. By
\eqref{eq:rs-uniform-normalized-bound}, there is a closed disk $K\subset\bC$
containing the support of the law of $W_N(\xi)$ for every $N$. Any weak limit
of these laws is therefore supported in $K$. Every nondegenerate centered
complex Gaussian law has unbounded support, so the only centered Gaussian law
that could be supported in $K$ is the degenerate law $\delta_0$.

That possibility is also excluded. The variables $W_N(\xi)$ are centered and
uniformly bounded, and \eqref{eq:rs-second-moment-limit} gives
\[
    \E_\mu|W_N(\xi)|^2\longrightarrow s_\eta(\xi)>0.
\]
If $W_N(\xi)\dto\delta_0$, uniform boundedness would force the second moments
to converge to $0$, a contradiction. Thus $W_N(\xi)$ has no centered Gaussian
weak limit at all; in particular \eqref{eq:rs-no-gaussian-limit} holds. The
same support bound for $|W_N(\xi)|^2$ excludes the unbounded law
$s_\eta(\xi)\operatorname{Exp}(1)$ in \eqref{eq:rs-no-exponential-limit}.

It remains to relate this sharp-window obstruction to the smooth-window CLT
of Theorem~\ref{thm:smooth-continuum-fourier-clt}. The interval estimate also holds for arbitrary blocks of the two-sided hull:
for $m\in\bZ$ and $L\geq1$,
\[
 \left|\sum_{j=m}^{m+L-1}g(T^j y)z^j\right|
 =\left|P_L(T^m y,z)\right|
 \leq\sqrt{10L},
 \qquad |z|=1.
\]
Now fix $R\geq1$ and $x=(y,s)$. After the change of variables used in
\eqref{eq:rs-suspension-reduction}, the interval $[s-R,s+R]$ contains one
block of $L\leq 2R/\alpha+2$ complete roof intervals and at most two partial
end intervals, of total length at most $2\alpha$. Hence
\[
 \bigl|M_x(\ind_{B_R}\chi_\xi)\bigr|
 \leq \eps |c_\alpha(\xi)|\sqrt{10L}+2\eps\alpha.
\]
Since $L=O(R)$, this gives, for each fixed $\xi$,
\begin{equation}
\label{eq:rs-symmetric-window-uniform-bound}
    \sup_{R\geq1}\sup_{x\in X}
    R^{-1/2}
    \bigl|M_x(\ind_{B_R}\chi_\xi)\bigr|<\infty.
\end{equation}
Suppose that the conclusion of
Theorem~\ref{thm:smooth-continuum-fourier-clt} held at this $\xi$ for all
smooth compactly supported functions. Since $s_\eta$ is bounded and
continuous, the approximation argument in the proof of
Theorem~\ref{thm:ball-fourier-clt} would then give
\[
    R^{-1/2}M(\ind_{B_R}\chi_\xi)
    \dto\mathcal{CN}(0,2s_\eta(\xi)).
\]
Because $s_\eta(\xi)>0$, this limit is nondegenerate and has unbounded
support, whereas \eqref{eq:rs-symmetric-window-uniform-bound} places every
ball-window law in a single bounded disk. This contradiction shows that the
smooth-window Fourier CLT also fails at every $\xi$ for which
$s_\eta(\xi)>0$.
\end{proof}

Taking $R_N=N\alpha$ proves Theorem~\ref{thm:intro-counterexample}. The
example is a random-measure counterexample. Whether the same fixed-frequency
obstruction can be realized by a stationary point process is left open.

\appendix
\section{A lexicographic Hilbert-space decomposition}
\label{app:lexicographic}

The lemma below is used in the proof of
Proposition~\ref{prop:wandering-innovation-decomposition}. Recall that a cut of the
lexicographically ordered group $\bZ^d$ is a pair $(L,R)$ of nonempty sets such that
$\bZ^d=L\sqcup R$ and $\ell<_{\rm lex}r$ for every $\ell\in L$ and $r\in R$.
A cut is nonprincipal if $L$ has no greatest element and $R$ has no least element.
The lemma shows that, when an increasing family of closed subspaces is continuous at
all nonprincipal cuts, its orthogonal differences are precisely the successive
differences $\mathscr H_k\ominus\mathscr H_{k-e_d}$.

\begin{lemma}[Lexicographic difference decomposition]
\label{lem:lexicographic-jump-decomposition}
Let $(\mathscr H_k)_{k\in\bZ^d}$ be an increasing family of closed subspaces of
a Hilbert space, indexed by the lexicographic order, and put
\[
    \mathscr H_-:=\bigcap_{k\in\bZ^d}\mathscr H_k,
    \qquad
    \mathscr H_+:=\overline{\bigcup_{k\in\bZ^d}\mathscr H_k}.
\]
Suppose that for every nonprincipal cut $\bZ^d=L\sqcup R$,
\[
    \overline{\bigcup_{k\in L}\mathscr H_k}
    =\bigcap_{k\in R}\mathscr H_k.
\]
Then
\begin{equation}
\label{eq:abstract-lex-jump-decomposition}
    \mathscr H_+\ominus\mathscr H_-
    =\bigoplus_{k\in\bZ^d}
      \bigl(\mathscr H_k\ominus\mathscr H_{k-e_d}\bigr).
\end{equation}
\end{lemma}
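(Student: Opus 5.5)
Let $P_k$ denote the orthogonal projection onto $\mathscr H_k$ and set $\Delta_k:=\mathscr H_k\ominus\mathscr H_{k-e_d}$, with projection $P_k-P_{k-e_d}$. The plan is first to show that the spaces $\Delta_k$ are mutually orthogonal and contained in $\mathscr H_+\ominus\mathscr H_-$, and then to show that they span a dense subspace. Orthogonality is immediate: if $k<_{\rm lex}\ell$, then $k\leq_{\rm lex}\ell-e_d$, so $\Delta_k\subset\mathscr H_k\subset\mathscr H_{\ell-e_d}\perp\Delta_\ell$. Moreover $\Delta_k\subset\mathscr H_+$, and $\Delta_k\perp\mathscr H_-$ because $\mathscr H_-\subset\mathscr H_{k-e_d}$. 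Hence the right side of \eqref{eq:abstract-lex-jump-decomposition} is a closed subspace of the left side.

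For the reverse inclusion, take $v\in\mathscr H_+\ominus\mathscr H_-$ with $v\perp\Delta_k$ for every $k$; the aim is $v=0$. Since $v\perp\Delta_k$, we get $P_kv=P_{k-e_d}v$ for all $k$, so the function $k\mapsto P_kv$ is constant along every principal step. Consider the set
\[
    L:=\{k\in\bZ^d: P_kv=0\}.
\]
Suppose first that $L$ and its complement $R$ are both nonempty. Then $L$ is a lexicographic initial segment (if $P_kv=0$ and $j<_{\rm lex}k$, then $P_jv=P_jP_kv=0$), so $(L,R)$ is a cut. If the cut is principal, with $k$ the least element of $R$, then $k-e_d\in L$ and $P_kv=P_{k-e_d}v=0$, a contradiction. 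If it is nonprincipal, the hypothesis gives
\[
    \overline{\bigcup_{k\in L}\mathscr H_k}=\bigcap_{k\in R}\mathscr H_k.
\]
For $r\in R$, the vector $P_rv$ lies in $\bigcap_{k\in R}\mathscr H_k$ only after further argument, so I would argue more carefully here. Let $Q$ be the projection onto $\bigcap_{k\in R}\mathscr H_k$. It is the strong limit of $P_r$ along decreasing $r\in R$, and equally the strong limit of $P_\ell$ along increasing $\ell\in L$. Strong convergence of monotone projection nets along the totally ordered sides of the cut then gives $Qv=\lim_{\ell}P_\ell v=0$. Yet we also need $P_rv\neq0$ for some $r\in R$ to force $Qv\neq0$, and this is the main obstacle.

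The way I would close this gap is a nesting argument rather than a single cut. Pick $r\in R$ and set $w:=P_rv\neq0$. Define $L_w:=\{k:P_kw=0\}$, which is again an initial segment with $r\notin L_w$. Repeat the analysis: the principal case is excluded as before, since $P_kw=P_kP_rv$ also satisfies $P_kw=P_{k-e_d}w$ for $k\leq_{\rm lex}r$, and for $k>_{\rm lex}r$ we have $P_kw=w\neq0$. In the nonprincipal case, the sets $\{k\in R: k\leq_{\rm lex}r\}$ show that $w\in\mathscr H_r$ and $P_kw$ decreases to $Qw$ as $k$ decreases in $R$. The delicate point is that a decreasing net of projections can converge to $Q$ with $Qw=0$ even though each $P_kw\neq0$. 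To rule this out I would use the equality of the two limits to write $\|Qw\|=\lim\|P_\ell w\|=0$ and then compare with $\|P_kw\|^2=\langle P_kv,P_rv\rangle$, which is monotone in $k$. I expect this step to need a transfinite, Zorn-type argument on the family of cuts. I would try first to replace it by considering $\inf\{\|P_kv\|\}$ along $R$ together with the constancy across principal steps.

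Finally, the degenerate cases. If $L=\bZ^d$, then $P_kv=0$ for all $k$, so $v\perp\mathscr H_+$ and hence $v=0$. If $L=\emptyset$, then $v\in\mathscr H_+$ but every $P_kv\neq0$, and the same cut analysis is applied to $v-P_-v=v$ at the bottom end, using $\bigcap_k\mathscr H_k=\mathscr H_-$ and $v\perp\mathscr H_-$. In every case $v=0$, which yields \eqref{eq:abstract-lex-jump-decomposition}.
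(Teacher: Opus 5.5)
Your orthogonality and inclusion argument, the initial-segment property of $L=\{k:P_kv=0\}$, and the exclusion of principal cuts are all correct. The density half, however, is not completed, and the step left open is where the content of the lemma lies. At a nonprincipal cut you correctly get $Qv=0$, but this does not contradict $P_rv\neq0$ for every $r\in R$: the decreasing net $(P_rv)_{r\in R}$ can converge to $0$ without any term vanishing, and the single cut identity does not prevent this. Your nesting repair is circular. For $w=P_rv$ with $r\in R$ one has $P_kw=P_kv$ when $k\leq_{\rm lex}r$ and $P_kw=w\neq0$ when $k>_{\rm lex}r$, so $L_w=L$ and you are back at the same cut. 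The case $L=\emptyset$ has the same defect at the bottom end: there $P_{\mathscr H_-}v=\lim P_kv=0$ while every $P_kv\neq0$, so ``the same cut analysis'' settles nothing. What would close the argument is a coinitial subset of $R$ on which $k\mapsto P_kv$ is constant. A nonprincipal cut has the form $L=\{k:(k_1,\dots,k_i)\leq_{\rm lex}(a_1,\dots,a_i)\}$ with $1\leq i\leq d-1$. The coinitial part of $R$ is then the whole class with prefix $(a_1,\dots,a_{i-1},a_i+1)$, a copy of $\bZ^{d-i}$. Your constancy holds only along $e_d$-lines, which suffices only when $i=d-1$.

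No transfinite or Zorn-type argument is needed, because a nonprincipal cut is fixed by a prefix of length at most $d-1$. The missing idea is a finite induction over lexicographic levels. The paper does this as an induction on $d$. It splits $\bZ^d$ into blocks of fixed first coordinate and uses the nonprincipal cut between consecutive blocks to get $\mathscr H_{m-1}^+=\mathscr H_m^-$. It then extends nonprincipal cuts of $\bZ^{d-1}$ to cuts of $\bZ^d$ to check the hypothesis inside each block, and finally decomposes the bilateral sequence $(\mathscr H_m^+)_m$.

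In your vector-wise formulation, prove by downward induction on $i=d-1,\dots,0$ that $P_kv$ is constant on each class of fixed prefix $(k_1,\dots,k_i)$. For $i=d-1$ this is just $P_kv=P_{k-e_d}v$. For the inductive step, take consecutive classes with prefixes $(p,a)$ and $(p,a+1)$, where $p\in\bZ^{i-1}$; they are separated by a nonprincipal cut. The first class is cofinal in $L$ and the second is coinitial in $R$. So projecting $v$ onto the two equal sides of the cut identity returns the two constant values, which must therefore coincide. At $i=0$ you get $P_kv=w$ for all $k$. Then $w\in\mathscr H_-$, and $\|w\|^2=\langle v,P_kv\rangle=\langle v,w\rangle=0$. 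Hence $P_kv=0$ for all $k$, so $v\perp\mathscr H_+$ and $v=0$.
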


\begin{proof}
For $d=1$, \eqref{eq:abstract-lex-jump-decomposition} is the orthogonal
decomposition of an increasing sequence into its successive differences.
Assume the lemma in dimension $d-1$ and write
$k=(m,r)$ with $m\in\bZ$ and $r\in\bZ^{d-1}$. Set
\[
    \mathscr H_m^-:=\bigcap_{r\in\bZ^{d-1}}\mathscr H_{(m,r)},
    \qquad
    \mathscr H_m^+:=\overline{\bigcup_{r\in\bZ^{d-1}}\mathscr H_{(m,r)}}.
\]
The nonprincipal cut between the blocks with first coordinate $m-1$ and $m$
has left endpoint $\mathscr H_{m-1}^+$ and right endpoint
$\mathscr H_m^-$, hence
$\mathscr H_{m-1}^+=\mathscr H_m^-$. To apply the induction hypothesis inside
the $m$-th block, let $(L',R')$ be a nonprincipal cut of $\bZ^{d-1}$ and
extend it to a cut $(L,R)$ of $\bZ^d$ by adjoining all blocks with first
coordinate $<m$ to $L$ and all blocks with first coordinate $>m$ to $R$.
Then
\[
 \overline{\bigcup_{r\in L'}\mathscr H_{(m,r)}}
 =\overline{\bigcup_{k\in L}\mathscr H_k},
 \qquad
 \bigcap_{r\in R'}\mathscr H_{(m,r)}
 =\bigcap_{k\in R}\mathscr H_k.
\]
Indeed, earlier blocks are contained in every space from the $m$-th block,
while later blocks contain every such space. Thus the cut continuity in
$\bZ^d$ gives the required cut continuity inside the $m$-th block, and the
induction hypothesis gives
\[
    \mathscr H_m^+\ominus\mathscr H_m^-
    =\bigoplus_{r\in\bZ^{d-1}}
      \bigl(\mathscr H_{(m,r)}\ominus
             \mathscr H_{(m,r)-e_d}\bigr).
\]
The spaces on the left are the successive differences of the increasing
sequence $(\mathscr H_m^+)_{m\in\bZ}$. Since
\[
    \bigcap_m\mathscr H_m^+
    =\bigcap_m\mathscr H_{m+1}^-
    =\bigcap_{m,r}\mathscr H_{(m,r)}
    =\mathscr H_-,
\]
and
\[
    \overline{\bigcup_m\mathscr H_m^+}
    =\overline{\bigcup_{m,r}\mathscr H_{(m,r)}}
    =\mathscr H_+,
\]
decomposing this bilateral sequence and then substituting the block
decompositions gives
\eqref{eq:abstract-lex-jump-decomposition}.
\end{proof}

\section*{Statements and Declarations}

\paragraph{\textbf{Funding.}}
This work was supported by the Swedish Research Council under grant
VR 11253322.

\paragraph{\textbf{Competing interests.}}
The author has no relevant financial or non-financial interests to disclose.

\paragraph{\textbf{Data availability.}}
No datasets were generated or analysed during the current study.


\begin{thebibliography}{HGBL23}

\bibitem[Abr59]{Abr59}
Abramov, L.~M.
\newblock \emph{On the entropy of a flow}.
\newblock Dokl. Akad. Nauk SSSR 128 (1959), 873--875.

\bibitem[AS03]{AS03}
Allouche, J.-P. and Shallit, J.
\newblock \emph{Automatic Sequences: Theory, Applications, Generalizations}.
\newblock Cambridge University Press, Cambridge, 2003.

\bibitem[Aub00]{Aub00}
Aubin, J.-P.
\newblock \emph{Applied Functional Analysis}.
\newblock Second edition, Wiley, New York, 2000.

\bibitem[BG13]{BG13}
Baake, M. and Grimm, U.
\newblock \emph{Aperiodic Order. Volume 1: A Mathematical Invitation}.
\newblock Cambridge University Press, Cambridge, 2013.

\bibitem[Bal19]{Bal19}
Balister, P.
\newblock \emph{Bounds on Rudin--Shapiro polynomials of arbitrary degree}.
\newblock Preprint, arXiv:1909.08777, 2019.

\bibitem[Bar63]{Bar63}
Bartlett, M.~S.
\newblock \emph{The spectral analysis of point processes}.
\newblock J. Roy. Statist. Soc. Ser. B 25 (1963), 264--281.

\bibitem[Bil99]{Bil99}
Billingsley, P.
\newblock \emph{Convergence of Probability Measures}.
\newblock Second edition, Wiley, New York, 1999.

\bibitem[Bjo26a]{Bjo26a}
Bj\"orklund, M.
\newblock \emph{Stealthy point processes and lattice induction}.
\newblock Preprint, arXiv:2607.25616, 2026.

\bibitem[Bjo26b]{Bjo26b}
Bj\"orklund, M.
\newblock \emph{Hyperuniform Delone realizations and rigidity}.
\newblock Preprint, arXiv:2608.16547, 2026.

\bibitem[BG20]{BG20}
Bj\"orklund, M. and Gorodnik, A.
\newblock \emph{Central limit theorems for group actions which are exponentially mixing of all orders}.
\newblock J. Anal. Math. 141 (2020), 457--482.

\bibitem[BYY26]{BYY26}
B{\l}aszczyszyn, B., Yogeshwaran, D., and Yukich, J.~E.
\newblock \emph{Limit theory for Lipschitz-localized statistics in random geometric models}.
\newblock Preprint, arXiv:2605.28430, 2026.

\bibitem[Bol82]{Bol82}
Bolthausen, E.
\newblock \emph{On the central limit theorem for stationary mixing random fields}.
\newblock Ann. Probab. 10 (1982), no.~4, 1047--1050.

\bibitem[Bri72]{Bri72}
Brillinger, D.~R.
\newblock \emph{The spectral analysis of stationary interval functions}.
\newblock In: Proc. Sixth Berkeley Symp. Math. Statist. Probab., Vol.~I,
University of California Press, Berkeley, 1972, 483--513.

\bibitem[Bri82]{Bri82}
Brillinger, D.~R.
\newblock \emph{Asymptotic normality of finite Fourier transforms of stationary generalized processes}.
\newblock J. Multivariate Anal. 12 (1982), 64--71.

\bibitem[BD87]{BD87}
Burton, R. and Denker, M.
\newblock \emph{On the central limit theorem for dynamical systems}.
\newblock Trans. Amer. Math. Soc. 302 (1987), no.~2, 715--726.

\bibitem[CC13]{CC13}
Cohen, G. and Conze, J.-P.
\newblock \emph{The CLT for rotated ergodic sums and related processes}.
\newblock Discrete Contin. Dyn. Syst. 33 (2013), no.~9, 3981--4002.

\bibitem[DVJ03]{DVJ03}
Daley, D.~J. and Vere-Jones, D.
\newblock \emph{An Introduction to the Theory of Point Processes. Volume I: Elementary Theory and Methods}.
\newblock Second edition, Springer, New York, 2003.

\bibitem[DG12]{DG12}
Dooley, A.~H. and Golodets, V.~Ya.
\newblock \emph{On the entropy of actions of nilpotent Lie groups and their lattice subgroups}.
\newblock Ergodic Theory Dynam. Systems 32 (2012), no.~2, 535--573.

\bibitem[FRS07]{FRS07}
Fa\"y, G., Roueff, F., and Soulier, P.
\newblock \emph{Estimation of the memory parameter of the infinite-source Poisson process}.
\newblock Bernoulli 13 (2007), no.~2, 473--491.

\bibitem[Gol51]{Gol51}
Golay, M.~J.~E.
\newblock \emph{Static multislit spectrometry and its application to the panoramic display of infrared spectra}.
\newblock J. Opt. Soc. Amer. 41 (1951), 468--472.

\bibitem[HGBL23]{HGBL23}
Hawat, D., Gautier, G., Bardenet, R., and Lachi\`eze-Rey, R.
\newblock \emph{On estimating the structure factor of a point process, with applications to hyperuniformity}.
\newblock Stat. Comput. 33 (2023), article 61.

\bibitem[HO74]{HO74}
Hawkes, A.~G. and Oakes, D.
\newblock \emph{A cluster process representation of a self-exciting process}.
\newblock J. Appl. Probab. 11 (1974), 493--503.


\bibitem[Kal82]{Kal82}
Kalikow, S.~A.
\newblock \emph{$T,T^{-1}$ transformation is not loosely Bernoulli}.
\newblock Ann. of Math. (2) 115 (1982), no.~2, 393--409.

\bibitem[Kam81]{Kam81}
Kami\'nski, B.
\newblock \emph{The theory of invariant partitions for $\bZ^d$-actions}.
\newblock Bull. Acad. Polon. Sci. S\'er. Sci. Math. 29 (1981), 349--362.

\bibitem[Kam91]{Kam91}
Kami\'nski, B.
\newblock \emph{Decreasing nets of $\sigma$-algebras and their applications to ergodic theory}.
\newblock Tohoku Math. J. (2) 43 (1991), 263--274.

\bibitem[Kam96]{Kam96}
Kami\'nski, B.
\newblock \emph{Invariant $\sigma$-algebras for $\bZ^d$-actions and their applications}.
\newblock In: M. Pollicott and K. Schmidt (eds.), \emph{Ergodic Theory of
$\bZ^d$ Actions}, London Math. Soc. Lecture Note Ser. 228, Cambridge
University Press, Cambridge, 1996, 403--414.

\bibitem[KL94]{KL94}
Kami\'nski, B. and Liardet, P.
\newblock \emph{Spectrum of multidimensional dynamical systems with positive entropy}.
\newblock Studia Math. 108 (1994), 77--85.

\bibitem[KLH26]{KLH26}
Klatt, M.~A., Last, G., and Henze, N.
\newblock \emph{A genuine test for hyperuniformity}.
\newblock Preprint, arXiv:2210.12790, version 2, 2026.

\bibitem[KV22]{KV22}
Kosloff, Z. and Voln\'y, D.
\newblock \emph{Local limit theorem in deterministic systems}.
\newblock Ann. Inst. Henri Poincar\'e Probab. Stat. 58 (2022), no.~1, 548--566.

\bibitem[KY24]{KY24}
Krishnapur, M. and Yogeshwaran, D.
\newblock \emph{Stationary random measures: covariance asymptotics, variance bounds and central limit theorems}.
\newblock Preprint, arXiv:2411.08848, 2024.

\bibitem[Mas26]{Mas26}
Mastrilli, G.
\newblock \emph{Asymptotic fluctuations of smooth linear statistics of independently perturbed lattices}.
\newblock J. Stat. Phys. 193 (2026), article 24.

\bibitem[MBL24]{MBL24}
Mastrilli, G., B\l aszczyszyn, B., and Lavancier, F.
\newblock \emph{Estimating the hyperuniformity exponent of point processes}.
\newblock Preprint, arXiv:2407.16797, 2024.

\bibitem[Maz24]{Maz24}
Maz\'a\v{c}, J.
\newblock \emph{Correlation functions of the Rudin--Shapiro sequence}.
\newblock Indag. Math. (N.S.) 35 (2024), no.~5, 771--795.

\bibitem[McL74]{McL74}
McLeish, D.~L.
\newblock \emph{Dependent central limit theorems and invariance principles}.
\newblock Ann. Probab. 2 (1974), no.~4, 620--628.

\bibitem[Mei74]{Mei74}
Meilijson, I.
\newblock \emph{Mixing properties of a class of skew-products}.
\newblock Israel J. Math. 19 (1974), 266--270.

\bibitem[Osa21]{Osa21}
Osada, S.
\newblock \emph{Isomorphisms between determinantal point processes with translation-invariant kernels and Poisson point processes}.
\newblock Ergodic Theory Dynam. Systems 41 (2021), no.~12, 3807--3820.

\bibitem[PW10]{PW10}
Peligrad, M. and Wu, W.~B.
\newblock \emph{Central limit theorem for Fourier transforms of stationary processes}.
\newblock Ann. Probab. 38 (2010), no.~5, 2009--2022.

\bibitem[PZ19]{PZ19}
Peligrad, M. and Zhang, N.
\newblock \emph{Central limit theorem for Fourier transform and periodogram of random fields}.
\newblock Bernoulli 25 (2019), no.~1, 499--520.

\bibitem[Que10]{Que10}
Queff\'elec, M.
\newblock \emph{Substitution Dynamical Systems---Spectral Analysis}.
\newblock Second edition, Lecture Notes in Mathematics 1294, Springer, Berlin, 2010.

\bibitem[RS61]{RS61}
Rohlin, V.~A. and Sinai, Ya.~G.
\newblock \emph{Construction and properties of invariant measurable partitions}.
\newblock Dokl. Akad. Nauk SSSR 141 (1961), no.~5, 1038--1041.

\bibitem[Rud59]{Rud59}
Rudin, W.
\newblock \emph{Some theorems on Fourier coefficients}.
\newblock Proc. Amer. Math. Soc. 10 (1959), 855--859.

\bibitem[Sha51]{Sha51}
Shapiro, H.~S.
\newblock \emph{Extremal problems for polynomials and power series}.
\newblock M.S. thesis, Massachusetts Institute of Technology, 1951.

\bibitem[Smo75]{Smo75}
Smorodinsky, M.
\newblock \emph{Construction of $K$-flows}.
\newblock Adv. Math. 15 (1975), 207--215.

\bibitem[TBvB13]{TBvB13}
Teichmann, J., Ballani, F., and van den Boogaart, K.~G.
\newblock \emph{Generalizations of Mat\'ern's hard-core point processes}.
\newblock Spatial Statist. 3 (2013), 33--53.

\bibitem[Vol99]{Vol99}
Voln\'y, D.
\newblock \emph{Invariance principles and Gaussian approximation for strictly stationary processes}.
\newblock Trans. Amer. Math. Soc. 351 (1999), no.~8, 3351--3371.

\bibitem[Wie39]{Wie39}
Wiener, N.
\newblock \emph{The ergodic theorem}.
\newblock Duke Math. J. 5 (1939), no.~1, 1--18.

\end{thebibliography}
\end{document}